\date{
%\today
Nov 28, 2021}
\let\oldsection\section
\renewcommand\section{\setcounter{equation}{0}\oldsection}
\newtheorem{corollary}{Corollary}[section]
\newtheorem{theorem}{Theorem}[section]
\newtheorem{lemma}{Lemma}[section]
\newtheorem{proposition}{Proposition}[section]
\newtheorem{definition}{Definition}[section]
\newtheorem{remark}{Remark}[section]
\begin{document}

\title[Entropy-bounded solutions compressible Navier-Stokes]{Propagation of uniform boundedness of entropy and inhomogeneous regularities for viscous and heat conductive gases with far field vacuum in three dimensions}

%\author{Dongfen~Bian}
%\address[Dongfen~Bian]{ School of Mathematics and Statistics, Beijing Institute of Technology, Beijing 100081, China; Division of Applied Mathematics, Brown University, Providence, Rhode Island 02912, USA}
%\email{dongfen\_bian@brown.edu; biandongfen@bit.edu.cn}

%\author{Chongsheng~Cao}
%\address[Chongsheng~Cao]{Department of Mathematics, Florida International University, University Park, Miami, FL 33199, USA}
%\email{caoc@fiu.edu}

\author{Jinkai~Li}
\address[Jinkai~Li]{South China Research Center for Applied Mathematics and Interdisciplinary Studies,
School of Mathematical Sciences, South China Normal University, Guangzhou 510631, China}
\email{jklimath@m.scnu.edu.cn; jklimath@gmail.com}

%\author{Edriss~S.~Titi}
%\address[Edriss~S.~Titi]{
%Department of Mathematics, Texas A\&M University, 3368 TAMU, College Station, TX 77843-3368, USA. ALSO, Department of Computer Science and Applied Mathematics, Weizmann Institute of Science, Rehovot 76100, Israel.}
%\email{titi@math.tamu.edu and edriss.titi@weizmann.ac.il}

\author{Zhouping Xin}
\address[Zhouping Xin]{
The Institute of Mathematical Sciences, The Chinese University of Hong Kong, Hong Kong, China}
\email{zpxin@ims.cuhk.edu.hk}

\keywords{Heat conductive compressible Navier-Stokes equations; strong solutions; far field vacuum; uniform
boundedness of entropy; inhomogeneous Sobolev spaces.}
\subjclass[2010]{35Q30, 76N99.}

%%% ----------------------------------------------------------------------

\begin{abstract}
Due to the highly degeneracy and singularities of the entropy equation, the physical entropy for viscous and heat conductive polytropic gases
behave singularly in the presence of vacuum and it is thus a challenge to study
its dynamics. It is shown in this paper that the uniform boundedness of the entropy and the inhomogeneous Sobolev regularities
of the velocity and temperature can be propagated for viscous and heat conductive gases in $\mathbb R^3$,
provided that the initial vacuum occurs only at far fields
with suitably slow decay of the initial density.
Precisely, it is proved that for any strong solution to the Cauchy problem of the heat conductive compressible
Navier--Stokes equations, the corresponding entropy keeps uniformly bounded and the $L^2$ regularities of the velocity and
temperature can be propagated, up to the existing time of the
solution, as long as the initial density vanishes only at far fields with a rate no more than $O(\frac{1}{|x|^2})$.
The main tools are some singularly weighted energy estimates
and an elaborate De Giorgi type iteration technique. The De Giorgi type iterations are carried out to different equations
in establishing the lower and upper bounds of the entropy.
\end{abstract}

%%% ----------------------------------------------------------------------
\maketitle

%\tableofcontents
\allowdisplaybreaks
\section{Introduction}

In this paper, we consider the following heat conductive compressible Navier--Stokes equations in $\mathbb R^3$:
\begin{eqnarray}
  \partial_t\rho+\text{div}\,(\rho u)=0,\label{EQRHO}\\
  \rho(\partial_tu+(u\cdot\nabla)u)-\mu\Delta u-(\mu+\lambda)\nabla\text{div}\,u+\nabla p=0, \label{EQU}\\
  c_v\rho(\partial_t\theta+u\cdot\nabla\theta)+p\text{div}\,u-\kappa\Delta\theta=\mathcal Q(\nabla u), \label{EQTHETA}
\end{eqnarray}
where the unknowns $\rho\in[0,\infty)$, $u\in\mathbb R^3$, $\theta\in[0,\infty)$, and $p\in[0,\infty)$, respectively, represent the density, velocity, temperature, and pressure. Here, $c_v$ is a positive constant, $\mu$ and $\lambda$ are
the viscous coefficients, both assumed to be constants and satisfy the physical constraints
$$
\mu>0,\quad 2\mu+3\lambda>0,
$$
$\kappa$ is the heat conductive coefficient, assumed to be a positive constant, and $\mathcal Q(\nabla u)$ is a quadratic term of $\nabla u$ given as
$$
\mathcal Q(\nabla u)=\frac\mu2|\nabla u+(\nabla u)^T|^2+\lambda(\text{div}\,u)^2.
$$

System (\ref{EQRHO})--(\ref{EQTHETA}) is complemented with some constitutive equations. The equations of state for
ideal gases are given by
$$
p=R\rho\theta,\quad e=c_v\theta,
$$
for a positive constant $R$, where $e$ is the specific internal energy. By the Gibbs equation $\theta Ds=De+pD(\frac1\rho)$, where $s$ is
the specific entropy, it holds that
$$
p=Ae^{\frac s{c_v}}\rho^\gamma
$$
for some positive constant $A$, where $\gamma-1=\frac R{c_v}$. It is clear that $\gamma>1$. In terms of $\rho$ and $\theta$, the specific entropy $s$ can be expressed as
\begin{equation}\label{ENTROPY}
s=c_v\left(\log\frac RA+\log\theta-(\gamma-1)\log\rho\right),
\end{equation}
satisfying
\begin{equation}
  \label{EQs}
  \rho(\partial_ts+u\cdot\nabla s)-\frac{\kappa}{c_v}\Delta s=\kappa(\gamma-1)\text{div}\left(\frac{\nabla\rho}{\rho}\right)
  +\frac1\theta\left(\mathcal Q(\nabla u)+\kappa\frac{|\nabla\theta|^2}{\theta}\right),
\end{equation}
in the region where both $\rho$ and $\theta$ are positive.

The compressible Navier--Stokes equations have been studied extensively with many significant
results. One of the important issues in this theory is the vacuum, which, if occurs,
means that the density of the fluid vanishes at some points of the domain occupied by the fluid or at far fields.
Indeed, the possible presence of vacuum is one of the main difficulties in the theory of global well-posedness of general solutions to
the compressible Navier--Stokes equations, and one of the main reasons is that system
(\ref{EQRHO})--(\ref{EQTHETA}) changes its types in the sense that it is a hyperbolic-parabolic
coupled system in the non-vacuum region but degenerates to a hyperbolic-elliptic one near the vacuum region. It is
even more difficult to analyze the properties of the entropy in the presence of vacuum,
as the governing equation (\ref{EQs}) for entropy is highly degenerate and singular in the vacuum region. Due to this,
most of the mathematical studies on the compressible Navier--Stokes equations in the presence
of vacuum focus on system (\ref{EQRHO})--(\ref{EQTHETA}) regardless of the entropy.

The mathematical theory for the one-dimensional compressible Navier--Stokes equations is satisfactory and in particular
the global well-posedness has already been known. In the absence of vacuum, global well-posedness of
strong solutions was established by Kazhikov--Shelukin \cite{KAZHIKOV82} and Kazhikov \cite{KAZHIKOV77}, which
were later extended in the setting of weak solutions, see, e.g., \cite{CHEHOFTRI00,JIAZLO04,ZLOAMO97,ZLOAMO98}; see
Li--Liang \cite{LILIANG16} for the large time behavior of solutions with large initial data. In the presence of vacuum,
the corresponding global well-posedness of strong solutions were recently established by the first author of this paper
in \cite{LJK1DHEAT,LJK1DNONHEAT}, for both heat conductive and non-heat conductive ideal gases without considering the entropy.

One major difference between the one-dimensional and multidimensional cases
for the compressible Navier--Stokes equations is
the possible formation of vacuum.
As shown by Hoff--Smoller \cite{HS}, in the one-dimensional case, no vacuum can be formed later in finite time from non-vacuum initial data,
while such a result remains open for the multidimensional case.

Comparing with the one-dimensional case, mathematical theory for the multidimensional compressible
Navier--Stokes equations is far from complete and some fundamental questions remain challenging,
which include the global well-posedness of smooth solutions and uniqueness of weak solutions.
Global existence of finite energy weak solutions with possible vacuum to the isentropic compressible
Navier--Stokes equations was first proved by Lions
\cite{LIONS98,LIONS93}, later improved by Feireisl--Novotn\'y--Petzeltov\'a \cite{FEIREISL01},
Jiang--Zhang \cite{JIAZHA03}, and more recently Bresch--Jabin \cite{BRESCH18}.
For the full compressible Navier--Stokes equations, global existence of variational weak solutions was proved
by Feireisl \cite{FEIREISL04B} under some assumptions on the
equations of state. For suitably regular initial data, the compressible Navier--Stokes equations admit a unique strong
or classic solution, at least in a short time:
in the absence of vacuum, this was proved by Nash \cite{NASH62} and Serrin \cite{SERRIN59} long time ago, and later
developed in many works, see, e.g., \cite{ITAYA71,VOLHUD72,TANI77,VALLI82,LUKAS84}; in the presence of vacuum,
this was proved in \cite{SALSTR93,CHOKIM04,CHOKIM06-1,CHOKIM06-2} under some initial compatibility conditions
which were removed in \cite{GLLZNOCOM,HUANGNOCOM,LIZHENGNOCOM} recently.
However, global well-posedness of solutions with arbitrary large initial
data is still open.
For the time being, global well-posedness was established only under some additional conditions
on the initial data: the case with small
perturbed initial data around non-vacuum equilibriums was achieved by
Matsumura--Nishida \cite{MATNIS80,MATNIS81,MATNIS82,MATNIS83}, and later developed in many works, see, e.g.,
\cite{PONCE85,VALZAJ86,DECK92,HOFF97,KOBSHI99,DANCHI01,CHENMIAOZHANG10,CHIDAN15,DANXU18,FZZ18}; while the case with
initial data of small energy but
allowing large oscillations and vacuum was proved by Huang--Li--Xin \cite{HLX12} and Li--Xin \cite{LIXIN13} for the
isentropic case, and later generalized to the full system in \cite{HUANGLI11,WENZHU17,LJK3DHEATSMALL}.

It should be noted that significant differences exist in the mathematical theories for the compressible Navier--Stokes
equations between the vacuum and non-vacuum cases and new phenomena may occur depending on the locations and states of vacuum.
In the non-vacuum case, one can establish solutions in both the homogeneous and inhomogeneous spaces depending on the
properties of the initial data, and the
solution spaces guarantee the uniform boundedness of the entropy. This fails in general in the presence of vacuum. Indeed,
in the case that the density has compact support,
the solution, no matter locally or globally, can be established only in the homogeneous spaces, see, e.g., \cite{CHOKIM04,CHOKIM06-1,CHOKIM06-2,HLX12,HUANGLI11,WENZHU17}, but not in the inhomogeneous spaces, see Li--Wang--Xin \cite{LWX}, and the global solutions may have unbounded entropy. Further more, the blowup results of Xin \cite{XIN98}
and Xin--Yan \cite{XINYAN13} imply that the global solutions established
in \cite{HUANGLI11,WENZHU17,LJK3DHEATSMALL} must have unbounded entropy, if initially there is an isolated mass group
surrounded by the vacuum region.
However, it is somewhat surprisingly that if the initial density vanishes only at far fields with a rate no more than
$O(\frac{1}{|x|^2})$, then, as for the non-vacuum case,
the solutions can be established in both the homogeneous and inhomogeneous spaces, and the
entropy can be uniformly bounded up to any finite time, at least in the one-dimensional case, see the recent works
by the authors \cite{LIXINADV,LIXINCPAM}.

Mathematically, since system (\ref{EQRHO})--(\ref{EQTHETA}) is already
closed, one can establish the corresponding theory for it,
regardless of the entropy. However, since the entropy is one of the fundamental states for describing the status
of the fluids, it is physically important to analyze its dynamical behavior. Unfortunately,
the theories developed previously for system (\ref{EQRHO})--(\ref{EQTHETA}) do not provide any
information about the entropy near the vacuum region.

Technically, due to the lack of the expression of the entropy in the vacuum region and the high
singularity and degeneracy of the entropy equation near the vacuum region, in spite of its importance,
the mathematical analysis of the entropy for the viscous compressible fluids in the presence of vacuum was rarely carried out before.
Recently, we have initiated studies on these issues in the one dimensional case in \cite{LIXINADV,LIXINCPAM}.
It was proved in \cite{LIXINADV,LIXINCPAM} that the one-dimensional compressible Navier--Stokes
equations, with or without heat conducting,
can propagate the uniform boundedness of the entropy locally or globally in time, as long as the initial density
vanishes only at far fields with a rate no more than $O(\frac{1}{x^2})$. However, the problems in the multi-dimensional case
have not been studied.

In this paper, we continue our studies, initiated in \cite{LIXINADV,LIXINCPAM},
on the uniform boundedness of the entropy and well-posedness of strong solutions in inhomogeneous spaces
for the multi-dimensional full compressible Navier--Stokes equations
in the presence of vacuum. We will focus on the heat conductive flows. Note that
for the heat conductive case one only need to deal with the
the far field vacuum, as the heat conductivity makes the temperature strictly
positive everywhere after the initial time, which implies that the entropy becomes unbounded instantaneously
if the interior vacuum occurs initially. It is noted that the problem of
the existence of solutions in the inhomogeneous Sobolev spaces, under some
conditions on the initial density allowing vacuum at the far fields has been studied in \cite{LIXINADV} for the non-heat conductive
compressible flows in one dimension, but it has not yet been studied either for the heat conductive flows or in multi dimensions.

We will consider the Cauchy problem only in this paper and, thus, complement the system with the following initial condition:
\begin{equation}
  \label{IC}
  (\rho, u, \theta)|_{t=0}=(\rho_0, u_0, \theta_0).
\end{equation}

The following conventions are used throughout this paper.
For $1\leq q\leq\infty$ and positive integer $m$, $L^q=L^q(\mathbb R^3)$ and
$W^{m,q}=W^{m,q}(\mathbb R^3)$, respectively, are the standard Lebesgue and Sobolev spaces, and $H^m=W^{m,2}$.
$D_0^1=D_0^1(\mathbb R^3)$ and $D^{m,q}=D^{m,q}(\mathbb R^3)$ are the homogeneous Sobolev spaces defined, respectively,
as
\begin{eqnarray*}
D_0^1=\{u\in L^6(\mathbb R^3)|\nabla u\in L^2(\mathbb R^3)\}, \\
D^{m,q}=\{u\in L_{loc}^1(\mathbb R^3)|\nabla^\alpha u\in L^q(\mathbb R^3), 1\leq|\alpha|\leq m\}.
\end{eqnarray*}
For $q=2$, $D^{m,q}$ is simply denoted as $D^m$.
For simplicity, $X$ is used to denote both the space $X$ itself and its $N$ product space $X^N$.
$\|u\|_q$ is the $L^q$ norm of $u$, and $\|(f_1,f_2,\cdots,f_n)\|_X$ is the sum
$\sum_{i=1}^N\|f_i\|_X$ or the equivalent norm $\left(\sum_{i=1}^N\|f_i\|_X^2
\right)^{\frac12}$. The integral of $f$ over $\mathbb R^3$ is abbreviated as
$\int fdx.$

Strong solutions considered in this paper are defined as follows.

\begin{definition}\label{DEF1}
Given a positive time $T$ and assume that
\begin{equation}\label{AID}
0\leq\rho_0\in H^1\cap W^{1,q},\quad u_0\in D_0^1\cap D^2,\quad 0\leq\theta_0\in D_0^1\cap D^2,
\end{equation}
for some $q\in(3,6]$.
A triple $(\rho, u, \theta)$ is called a strong solution to system (\ref{EQRHO})--(\ref{EQTHETA}) in $\mathbb R^3\times(0,T)$,
subject to (\ref{IC}), if it has the regularities
 \begin{eqnarray*}
  \rho\in C([0,T]; H^1\cap W^{1,q}),\quad\rho_t\in C([0,T]; L^2\cap L^q),\\
  (u, \theta)\in C([0,T]; D_0^1\cap D^2)\cap L^2(0,T; D^{2,q}),\\
  (u_t, \theta_t)\in L^2(0,T; D_0^1),\quad \sqrt\rho u_t, \sqrt\rho\theta_t\in L^\infty(0,T; L^2),
\end{eqnarray*}
satisfies equations (\ref{EQRHO})--(\ref{EQTHETA}) a.e.\,$(x,t)\in\mathbb R^3\times(0,T)$, and fulfills the condition
(\ref{IC}).
\end{definition}

\begin{definition}\label{DEF2}
A triple $(\rho, u, \theta)$ is called a global strong solution to system (\ref{EQRHO})--(\ref{EQTHETA}), subject to (\ref{IC}),
if it is a strong solution to the same system in $\mathbb R^3\times(0,T)$, for any finite time $T$.
\end{definition}

Before stating the main result of this paper, let us recall the following two theorems on the
local and global well-posedness of strong solutions to system (\ref{EQRHO})--(\ref{EQTHETA}), subject to (\ref{IC}), which are cited from
\cite{CHOKIM06-2} and \cite{LJK3DHEATSMALL}, respectively.

\begin{theorem}[Local well-posedness, see \cite{CHOKIM06-2}]
  \label{THMLOCAL}
Let $q\in(3,6]$ and assume in addition to (\ref{AID}) that
\begin{equation}
  -\mu\Delta u_0-(\mu+\lambda)\nabla\text{div} u_0+\nabla p_0=\sqrt{\rho_0} g_1, \quad -\kappa\Delta\theta_0-\mathcal Q(\nabla u_0)=\sqrt{\rho_0}g_2,\label{COM}
\end{equation}
for given functions $g_1, g_2\in L^2$, where $p_0=R\rho_0\theta_0$.
Then, there exists a positive time $T_*$ depending only on the initial data, such that system (\ref{EQRHO})--(\ref{EQTHETA}), subject to (\ref{IC}), admits a unique strong solution $(\rho, u, \theta)$ in $\mathbb R^3\times(0,T_*)$.
\end{theorem}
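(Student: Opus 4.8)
The plan is to construct the solution by a linearization-and-iteration scheme, following the classical strategy of Cho--Kim \cite{CHOKIM06-2} for degenerate density. Given the $k$-th approximation $(\rho^k,u^k,\theta^k)$, one first defines $\rho^{k+1}$ as the solution of the linear transport equation $\partial_t\rho^{k+1}+\text{div}\,(\rho^{k+1}u^k)=0$ with $\rho^{k+1}|_{t=0}=\rho_0$; since $u^k\in L^2(0,T;D^{2,q})\hookrightarrow L^1(0,T;W^{1,\infty})$ for $q>3$, nonnegativity of $\rho^{k+1}$ is propagated and $\|\rho^{k+1}(t)\|_{H^1\cap W^{1,q}}$ is controlled by $\exp\big(C\int_0^t\|\nabla u^k\|_{W^{1,q}}\,d\tau\big)$. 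Next one lets $u^{k+1}$ solve the linear Lam\'e-type parabolic system $\rho^k(\partial_tu^{k+1}+u^k\cdot\nabla u^{k+1})-\mu\Delta u^{k+1}-(\mu+\lambda)\nabla\text{div}\,u^{k+1}=-\nabla p^k$, with $p^k=R\rho^k\theta^k$, and finally lets $\theta^{k+1}$ solve $c_v\rho^k(\partial_t\theta^{k+1}+u^k\cdot\nabla\theta^{k+1})-\kappa\Delta\theta^{k+1}=\mathcal Q(\nabla u^k)-p^k\,\text{div}\,u^k$. Because $\rho^k$ may vanish, these last two problems are degenerate parabolic; I would solve each after replacing $\rho^k$ by $\rho^k+\varepsilon$, derive estimates independent of $\varepsilon$, and send $\varepsilon\to0$.

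The heart of the argument is a set of a priori estimates uniform in $k$ (and $\varepsilon$) on a short interval $[0,T_*]$ with $T_*$ depending only on the data. Let $\Phi(t)$ be a functional built from $\|\nabla u\|_2^2+\|\nabla\theta\|_2^2$, $\|\sqrt{\rho}u_t\|_2^2+\|\sqrt{\rho}\theta_t\|_2^2$, $\|u\|_{D^2}^2+\|\theta\|_{D^2}^2$ together with the time integrals of $\|u_t\|_{D_0^1}^2+\|\theta_t\|_{D_0^1}^2$ and $\|(u,\theta)\|_{D^{2,q}}^2$. The estimates proceed as follows: (i) test the momentum and temperature equations against $u_t$ and $\theta_t$ and integrate to get the basic energy--dissipation balance; (ii) differentiate the two equations in $t$, test against $u_t$ and $\theta_t$, and estimate the right-hand sides by means of $D_0^1\hookrightarrow L^6$, $W^{1,q}\hookrightarrow L^\infty$ and interpolation, where the initial values $\|\sqrt{\rho_0}\,u_t(0)\|_2=\|g_1\|_2$ and $\|\sqrt{\rho_0}\,\theta_t(0)\|_2=\|g_2\|_2$ are supplied precisely by the compatibility conditions (\ref{COM}); (iii) invoke $L^2$ elliptic regularity for the Lam\'e operator and for $-\Delta$ to bound $\|u\|_{D^2}$ and $\|\theta\|_{D^2}$ by $\|\rho u_t\|_2+\|\rho u\cdot\nabla u\|_2+\|\nabla p\|_2+\cdots$, and $L^q$ elliptic regularity to bound $\|u\|_{D^{2,q}}$ and $\|\theta\|_{D^{2,q}}$; (iv) combine everything into a differential inequality for $\Phi$ and close it by a continuity argument, taking $T_*$ small so that $\Phi$ stays below a fixed multiple of $\Phi(0)$. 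This confines the iterates to a fixed bounded subset of the solution space of Definition \ref{DEF1}.

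With the uniform bounds available, convergence of the scheme is obtained by showing that the differences $\bar\rho^{k+1}=\rho^{k+1}-\rho^k$, $\bar u^{k+1}=u^{k+1}-u^k$, $\bar\theta^{k+1}=\theta^{k+1}-\theta^k$ satisfy linear transport/parabolic equations whose sources are bilinear in the previous differences and the uniformly bounded higher-order norms; a single energy estimate (using $\nabla\rho^k\in L^2\cap L^q\hookrightarrow L^3$ to handle the transport source $\nabla\rho^k\cdot\bar u^k$ via $\bar u^k\in L^6$) then yields, after possibly shrinking $T_*$, a bound of the form $\sup_{[0,T_*]}\big(\|\bar\rho^{k+1}\|_2^2+\|\sqrt{\rho^k}\,\bar u^{k+1}\|_2^2+\|\sqrt{\rho^k}\,\bar\theta^{k+1}\|_2^2\big)+\int_0^{T_*}\big(\|\nabla\bar u^{k+1}\|_2^2+\|\nabla\bar\theta^{k+1}\|_2^2\big)\,dt\le\frac12\,(\text{same expression at step }k)$, so the iterates converge and their limit $(\rho,u,\theta)$ solves (\ref{EQRHO})--(\ref{EQTHETA}). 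Weak-$*$ lower semicontinuity transfers the uniform bounds to the limit, the time-continuity statements of Definition \ref{DEF1} follow from the equations and interpolation (upgrading weak to strong continuity in the standard way), and uniqueness is obtained by applying the same difference estimate to two solutions.

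I expect the main obstacle to be carrying the vacuum degeneracy of $\rho$ through every step of the estimates. Since $\rho$ can be arbitrarily small near the far field, there is no $L^\infty(0,T_*;L^2)$ control of $u_t$ or $\theta_t$ themselves, only of the weighted quantities $\sqrt{\rho}\,u_t$ and $\sqrt{\rho}\,\theta_t$; hence each term in steps (ii)--(iii) must be arranged to involve either these weighted norms, the two-derivative elliptic gain, or the time-integrated $D_0^1$ norms of $u_t,\theta_t$ (themselves controlled through the equations via $\|u_t\|_6\le C\|\nabla u_t\|_2$). In particular the quadratic term $\mathcal Q(\nabla u)$ in the temperature equation and the convective term $\rho u\cdot\nabla u$ in the momentum equation must be estimated using the $L^2(0,T_*;D^{2,q})$ regularity, so that $\nabla u\in L^2(0,T_*;L^\infty)$ --- a piece of information that is itself part of the bootstrap. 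Keeping this circular system of inequalities consistent, and verifying that all constants depend only on the data and not on any (vanishing) lower bound for $\rho$, is the delicate point; the $\varepsilon$-regularization is what makes the manipulations rigorous before one recovers the genuine vacuum case by passing to the limit.
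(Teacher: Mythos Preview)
The paper does not prove Theorem~\ref{THMLOCAL}; it is quoted verbatim as a known result from Cho--Kim \cite{CHOKIM06-2}, so there is no proof in the paper to compare against. Your sketch is a faithful outline of the Cho--Kim linearization/iteration strategy and correctly identifies the role of the compatibility conditions (\ref{COM}) in initializing $\|\sqrt{\rho_0}\,u_t(0)\|_2$ and $\|\sqrt{\rho_0}\,\theta_t(0)\|_2$, so as a summary of the cited proof it is essentially correct.
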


\begin{theorem}[Global well-posedness, see \cite{LJK3DHEATSMALL}]
  \label{THMGLOBAL}
Assume in addition to the conditions (\ref{AID}) and (\ref{COM}) that $2\mu>\lambda$. Then, there is a positive number $\varepsilon_0$ depending only on $R, \gamma, \mu, \lambda,$ and $\kappa$, such that system (\ref{EQRHO})--(\ref{EQTHETA}), subject to (\ref{IC}), has a unique global strong solution, provided that
$$
\mathscr N_0:=\|\rho_0\|_\infty(\|\rho_0\|_3+\|\rho_0\|_\infty^2\|\sqrt{\rho_0}u_0\|_2^2)(\|\nabla u_0\|_2^2+
  \|\rho_0\|_\infty\|\sqrt{\rho_0}E_0\|_2^2)\leq\varepsilon_0.
  $$
\end{theorem}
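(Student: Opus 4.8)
The plan is to combine the local existence theorem (Theorem~\ref{THMLOCAL}) with a family of \emph{a priori} estimates that are uniform on the whole interval of existence, and then to close the argument by the continuation criterion for strong solutions of (\ref{EQRHO})--(\ref{EQTHETA}), for which the bounds established below are exactly what is needed. Let $(\rho,u,\theta)$ be the local strong solution on its maximal interval $[0,T^*)$ and write $E=\frac12|u|^2+c_v\theta$ for the specific energy. I would run a bootstrap argument: postulate the working hypothesis
$$
\sup_{0\le t<T^*}\|\rho(t)\|_\infty\le 2\|\rho_0\|_\infty,\qquad \sup_{0\le t<T^*}\big(\|\nabla u\|_2^2+\|\rho\|_\infty\|\sqrt\rho E\|_2^2\big)\le 2\Lambda_0,
$$
where $\Lambda_0=\|\nabla u_0\|_2^2+\|\rho_0\|_\infty\|\sqrt{\rho_0}E_0\|_2^2$ is the initial value of the bracketed quantity, and then show that if $\mathscr N_0\le\varepsilon_0$ with $\varepsilon_0$ small (depending only on $R,\gamma,\mu,\lambda,\kappa$) the constant $2$ can be replaced by $3/2$ in both bounds, so that the hypothesis propagates to $t=\infty$.

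The basic energy estimates give conservation of total mass and total energy and, after testing (\ref{EQU}) by $u$ and (\ref{EQTHETA}) by $\theta$, control of $\int_0^t\!\int(\mu|\nabla u|^2+\kappa|\nabla\theta|^2)$ in terms of the initial energy; the far-field vacuum is harmless here since only $\sqrt\rho$-weighted quantities enter. The crucial estimate is the upper bound on the density: writing the momentum equation through the effective viscous flux $G=(2\mu+\lambda)\,\text{div}\,u-p$ and using the transport structure of (\ref{EQRHO}), one derives a Zlotnik-type differential inequality for $\|\rho\|_\infty$ along particle trajectories. Because of the coupling $p=R\rho\theta$, this inequality has to be run in tandem with the temperature estimates, and the precise algebraic form of $\mathscr N_0$ --- the product of $\|\rho_0\|_\infty$, a mass/kinetic-energy factor, and the first-order energy $\Lambda_0$ --- is exactly what makes it close with a constant strictly below the bootstrap value $2\|\rho_0\|_\infty$.

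The first-order estimates come next. Testing (\ref{EQU}) by $u_t$ controls $\sup_t\|\nabla u\|_2^2+\int_0^t\|\sqrt\rho u_t\|_2^2$; the delicate term is the pressure work $\int p\,\text{div}\,u_t\,dx$, which is integrated by parts and absorbed using (\ref{EQTHETA}) and an $L^4(0,T; L^4)$ bound on $\nabla u$ coming from the flux/vorticity system --- this is where the structural assumption $2\mu>\lambda$ is used, to control the nonlinear contributions of $\mathcal Q(\nabla u)$. Symmetrically, testing (\ref{EQTHETA}) by $\theta_t$ controls $\sup_t\|\nabla\theta\|_2^2+\int_0^t\|\sqrt\rho\theta_t\|_2^2$, with $\mathcal Q(\nabla u)$ and $p\,\text{div}\,u$ estimated through the $\nabla u$ and $\nabla^2u$ bounds already in hand. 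With $\rho$ bounded and these lower-order norms under control, elliptic regularity for the Lam\'e operator in (\ref{EQU}) and for the Laplacian in (\ref{EQTHETA}) upgrades $(u,\theta)$ to the $D^2$ and $L^2(0,T; D^{2,q})$ regularity of Definition~\ref{DEF1}, while (\ref{EQRHO}) propagates the $H^1\cap W^{1,q}$ bound on $\rho$ and the $L^2\cap L^q$ bound on $\rho_t$.

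All the constants produced this way are independent of $T^*$ --- they grow at most polynomially in $t$ and hence stay finite on every finite interval --- and in particular $\|\rho\|_\infty$ remains bounded, so the continuation criterion forces $T^*=\infty$; uniqueness is inherited from Theorem~\ref{THMLOCAL}. I expect the density upper bound of the second step to be the main obstacle: it is the single place where the far-field vacuum, the temperature-density coupling, and the smallness hypothesis all interact, and arranging that the Zlotnik-type inequality closes with precisely the power structure recorded in $\mathscr N_0$ is the delicate point; the remaining steps amount, in spirit, to a careful propagation of the small-initial-energy (Huang--Li--Xin-type) scheme to the heat-conductive setting.
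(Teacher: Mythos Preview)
The paper does not contain a proof of Theorem~\ref{THMGLOBAL}: this theorem is quoted verbatim from the external reference \cite{LJK3DHEATSMALL} and is used here only as input to Corollary~\ref{COR}. Consequently there is no ``paper's own proof'' against which your proposal can be compared.

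That said, your sketch is a reasonable outline of the bootstrap/continuation strategy actually carried out in \cite{LJK3DHEATSMALL}: local existence plus uniform-in-time a priori estimates closed under a smallness hypothesis on the scaling-invariant quantity $\mathscr N_0$, with the density upper bound via the effective viscous flux being the pivotal step and the condition $2\mu>\lambda$ entering through the control of $\mathcal Q(\nabla u)$. If your goal was to reproduce the argument of \cite{LJK3DHEATSMALL}, the high-level structure is right; but be aware that in the present paper this result is simply cited, not reproved, so any detailed comparison should be made against \cite{LJK3DHEATSMALL} directly rather than against the current text.
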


Note that the solutions in Theorem \ref{THMLOCAL} and Theorem \ref{THMGLOBAL} are both in the homogenous Sobolev spaces
and, as indicated in \cite{LWX}, the inhomogeneous Sobolev regularities can not be propagated by the compressible Navier--Stokes equations
in general, if the initial density is compactly supported. Besides, these solutions may have infinite entropy in the vacuum region,
if the initial density contains interior vacuum. Therefore,
in order to guarantee the uniform boundedness of the entropy or the inhomogeneous Sobolev regularities,
some additional assumptions on the initial density are required, as shown in the following main result of this paper.

\begin{theorem}
\label{THMMAIN}
Assume that in addition to the conditions (\ref{AID}) and (\ref{COM}), the initial density $\rho_0$ is positive on $\mathbb R^3$ and satisfies
\begin{equation}
|\nabla\rho_0(x)|\leq K_1\rho_0^\frac32(x),\quad\forall x\in\mathbb R^3,  \tag{H1}
\end{equation}
for a positive constants $K_1$. Denote
\begin{eqnarray*}
&&s_0=c_v\left(\log\frac RA+\log\theta_0-(\gamma-1)\log\rho_0\right),\quad
\underline s_0=\displaystyle\inf_{x\in\mathbb R^3}s_0(x),\\
&&\overline s_0=\displaystyle\sup_{x\in\mathbb R^3}s_0(x),\quad
\mathscr S_0=-\mu\Delta u_0-(\mu+\lambda)\nabla\text{div}\,u_0+R\nabla(\rho_0\theta_0).
\end{eqnarray*}
Let $(\rho, u, \theta)$ be an arbitrary strong solution to system (\ref{EQRHO})--(\ref{EQTHETA}) in $\mathbb R^3\times(0,T)$, subject to (\ref{IC}), and $s$ the corresponding entropy given by (\ref{ENTROPY}).

Then, the following statements hold:

(i) The additional regularity $u\in L^\infty(0,T; L^2)$ holds, if $u_0\in L^2$.

(ii) The additional regularity $\left(\frac u{\sqrt{\rho}},\theta\right)\in L^\infty(0,T; L^2)$ holds, if $\left(\frac{u_0}{\sqrt{\rho_0}},\theta_0\right)\in L^2$.

(iii) Assume in addition that
\begin{equation}
|\Delta\rho_0(x)| \leq  K_2\rho_0^2(x),\,\quad \forall x\in\mathbb R^3,\tag{H2}
\end{equation}
for a positive constant $K_2$. Then, it holds that
$$
\inf_{ \mathbb R^3\times(0,T)}s(x,t)>-\infty,\quad\text{ as long as }\underline s_0 >-\infty.
$$

(iv) Assume in addition that (H2) holds, then
$$
\sup_{ \mathbb R^3\times(0,T)}s(x,t)<+\infty,
$$
as long as $\overline s_0 <+\infty$ and
$\rho_0^{\frac{1-\gamma}{2}}u_0,\rho_0^{-\frac\gamma2}\nabla u_0,  \rho_0^{1-\frac\gamma2}\theta_0,\rho_0^{1-\frac\gamma2}\nabla\theta_0,
\rho_0^{-\frac\gamma2} \mathscr S_0 \in L^2.$
\end{theorem}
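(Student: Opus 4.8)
I would handle all four assertions with the same two tools — singularly weighted $L^2$ energy estimates and De Giorgi iterations — the common prerequisite being that the pointwise structure (H1) (and, for (iii)--(iv), (H2)) propagates in time. First I would establish this propagation: since $\rho_0>0$ and $\rho$ is transported by (\ref{EQRHO}), $\rho$ stays positive, and setting $g:=\rho^{-1/2}$ one finds $\partial_t g+u\cdot\nabla g=\tfrac12 g\,\text{div}\,u$; differentiating gives transport equations for $\nabla g$ (and, under (H2), for $\Delta\rho/\rho^2$) whose forcings are second-order-in-$u$ quantities weighted by $\rho^{-1/2}$, which one re-expresses through the elliptic identity $\mu\Delta u+(\mu+\lambda)\nabla\text{div}\,u=\rho\dot u+\nabla p$ coming from (\ref{EQU}). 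Combined with the a priori regularity in Definition \ref{DEF1} and a ladder of weighted energy estimates (see below), this yields, for each $t\in[0,T]$, $|\nabla\rho(\cdot,t)|\leq K_1(t)\rho^{3/2}$ and $|\Delta\rho(\cdot,t)|\leq K_2(t)\rho^2$ with $K_i(t)$ depending only on the data and $T$; equivalently $\rho^{-1/2}$ is globally Lipschitz (so $\rho(x)\gtrsim(1+|x|)^{-2}$) and the quotients $\nabla\rho/\rho$, $\nabla\rho/\rho^{3/2}$, $|\nabla\rho|^2/\rho^3$, $\Delta\rho/\rho^2$ are bounded on $\mathbb R^3\times[0,T]$. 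The ladder itself consists of $L^2$ estimates obtained by testing (\ref{EQU}), (\ref{EQTHETA}) and their time-differentiated forms against multipliers $\rho^{-a}u$, $\rho^{-b}\theta$, $\rho^{-c}\dot u$; the derivatives of the weights are always absorbed by the bounded quotients above, so each rung closes by Gr\"onwall against the previous ones and the homogeneous-space norms.

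\textbf{Parts (i)--(iii).} Parts (i)--(ii) are the bottom rungs: testing (\ref{EQU}) by $u/\rho$ produces $\tfrac{d}{dt}\|u\|_2^2$ plus the dissipation $\int\rho^{-1}|\nabla u|^2$, with the weight-derivative and pressure terms absorbed via (H1) and $\nabla\theta\in L^\infty(0,T;L^2)$, giving $u\in L^\infty(0,T;L^2)$ when $u_0\in L^2$; testing (\ref{EQU}) by $u/\rho^2$ and (\ref{EQTHETA}) by $\theta/\rho$ yields $u/\sqrt\rho,\theta\in L^\infty(0,T;L^2)$ under the stated initial hypotheses. For (iii) I would work in the region $\{\rho>0,\theta>0\}$, which is all of $\mathbb R^3$ for $t>0$ because $\kappa>0$ renders $\theta$ strictly positive; there $s$ satisfies (\ref{EQs}). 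Writing $\text{div}(\nabla\rho/\rho)=\Delta\rho/\rho-|\nabla\rho|^2/\rho^2=\rho\bigl(\Delta\rho/\rho^2-|\nabla\rho|^2/\rho^3\bigr)$ and using propagated (H1)--(H2), dividing (\ref{EQs}) by $\rho$ and discarding the nonnegative term $\tfrac1\theta(\mathcal Q(\nabla u)+\kappa|\nabla\theta|^2/\theta)$ gives $\partial_t s+u\cdot\nabla s-\tfrac{\kappa}{c_v\rho}\Delta s\geq -C$ with $C$ depending only on $\kappa,\gamma,K_1,K_2$. Applying this to $(\underline s_0-Ct-s)_+$ kills the drift term, produces the dissipation $\int|\nabla(\underline s_0-Ct-s)_+|^2$ and a nonpositive right-hand side; a De Giorgi iteration over the sub-levels (needed to legitimize the maximum principle for the spatially unbounded coefficient $\kappa/(c_v\rho)$ on $\mathbb R^3$) then forces $(\underline s_0-Ct-s)_+\equiv0$, i.e. $s\geq\underline s_0-Ct>-\infty$.

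\textbf{Part (iv).} Here the nonnegative source $\tfrac1\theta(\mathcal Q(\nabla u)+\kappa|\nabla\theta|^2/\theta)$ in (\ref{EQs}) is genuinely unbounded and cannot be thrown away; this is the heart of the proof. I would run a De Giorgi iteration at the super-levels $\{s>k\}$ of a \emph{transformed} equation. On $\{s>k\}$ one has $\theta>\tfrac AR e^{k/c_v}\rho^{\gamma-1}$, hence $\tfrac1\theta<\tfrac RA e^{-k/c_v}\rho^{1-\gamma}$, and (using $\theta\in L^\infty$) also $\rho\leq Ce^{-k/(c_v(\gamma-1))}$; these convert $\tfrac1\theta\mathcal Q(\nabla u)$ into $\lesssim e^{-k/c_v}\rho^{-\gamma}|\nabla u|^2=e^{-k/c_v}|\rho^{-\gamma/2}\nabla u|^2$, which is integrable by the weighted ladder — this is precisely why the hypotheses $\rho_0^{-\gamma/2}\nabla u_0\in L^2$ and the companions are imposed. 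For the term $\kappa|\nabla\theta|^2/\theta^2=\kappa|\nabla\log\theta|^2$ I would use $\nabla\log\theta=\tfrac1{c_v}\nabla s+(\gamma-1)\nabla\rho/\rho$, so that $\kappa|\nabla\log\theta|^2\leq\tfrac{2\kappa}{c_v^2}|\nabla s|^2+C\rho$; the $C\rho$ part is harmless and the $|\nabla s|^2$ part is best controlled by first passing to the equation for $v:=\theta\rho^{1-\gamma}=\tfrac AR e^{s/c_v}$. Since $\gamma-1=R/c_v$, a direct computation from (\ref{EQRHO})--(\ref{EQTHETA}) gives
\[
\partial_t v+u\cdot\nabla v-\frac{\kappa}{c_v\rho}\,\Delta v
=-\frac{2\kappa(\gamma-1)}{c_v}\,\nabla\rho^{-1}\cdot\nabla v+\frac{\kappa}{c_v}\,v\,\rho^{-\gamma}\Delta\rho^{\gamma-1}+\frac{1}{c_v}\,\rho^{-\gamma}\mathcal Q(\nabla u),
\]
in which the drift $\nabla\rho^{-1}$ pairs with $\rho$ to give the bounded $\nabla\rho/\rho$, the zeroth-order coefficient $\rho^{-\gamma}\Delta\rho^{\gamma-1}$ is bounded by (H1)--(H2), and only the last term is dangerous. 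Because $s$ does not decay at spatial infinity, the smallness driving the iteration comes not from $|\{s>k\}|$ but from the exponential gain $e^{-k/c_v}$ (together with $\rho$ being small on $\{s>k\}$); balancing these yields a recursion $Y_{n+1}\leq Cb^nY_n^{1+\beta}$ for the energies $Y_n$ at levels $k_n$ increasing to a finite value, whence $Y_n\to0$ and $s$ is bounded above by a constant depending only on the data and $T$.

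\textbf{Main obstacle.} Two steps carry the weight. The preliminary one is subtly circular: propagating (H1)--(H2) needs singularly weighted bounds on the second derivatives of $u$, which in turn rest on the weighted energy ladder, so the estimates must be organized and bootstrapped in the right order. The harder one is the De Giorgi scheme for (iv): the singular source must be tamed using the level-set constraints on $\theta$ and $\rho$ simultaneously, and the $|\nabla\theta|^2/\theta$ contribution is borderline with respect to the weighted regularity actually available, which is what forces the passage to the equation for $v$ and a careful accounting of how the exponential gain $e^{-k/c_v}$ competes with the possibly infinite measure of the super-level sets $\{s>k\}$.
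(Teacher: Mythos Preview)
Your overall architecture---singularly weighted energy estimates feeding a De Giorgi iteration---matches the paper, but the preliminary step of propagating (H1)--(H2) from $\rho_0$ to $\rho(\cdot,t)$ is both unnecessary and where the argument would break. The paper's key simplification is to avoid this entirely: one proves only the elementary transport estimate $e^{-C\Phi_T}\rho_0\le\rho\le e^{C\Phi_T}\rho_0$ (a direct consequence of (H1) together with $\sqrt{\rho_0}u,\nabla u\in L^2(0,T;L^\infty)$), and then uses the \emph{fixed} weight $\rho_0$---never $\rho$---in every singular energy estimate and in both De Giorgi schemes. Since $\rho_0$ is time-independent, (H1)--(H2) apply verbatim whenever a derivative falls on the weight, and $\rho\sim\rho_0$ lets one freely swap $\rho$ for $\rho_0$ in lower-order factors. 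This dissolves the circularity you yourself flag. Your proposed pointwise propagation of (H1) is genuinely problematic: the transport equation for $\nabla g$, $g=\rho^{-1/2}$, has source $\tfrac12 g\nabla\text{div}\,u$, and a pointwise bound along characteristics would require $\rho^{-1/2}\nabla\text{div}\,u\in L^1(0,T;L^\infty_x)$; the strong-solution regularity gives only $\nabla^2u\in L^\infty(0,T;L^2)\cap L^2(0,T;L^q)$ with $q\le6$, which does not yield $L^\infty_x$ control, let alone against the growing weight $\rho^{-1/2}\sim\langle x\rangle$. The elliptic re-expression via $\rho\dot u+\nabla p$ still leaves $\rho^{1/2}\dot u$, for which only $L^2$-type bounds are available, and isolating $\nabla\text{div}\,u$ passes through a Riesz transform that does not respect the singular weight. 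Propagating (H2) would be harder still.

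For (iii) your scheme is essentially the paper's (De Giorgi on $(s-\ell)_-$ with an $M_Tt$ shift absorbing the bounded $\text{div}(\nabla\rho/\rho)$ term), except that the paper uses $\log\rho_0$ and the test weight $\rho_0^{-1}$ throughout. For (iv) the paper runs De Giorgi on the temperature equation via the truncation $\theta_\ell=\theta-\ell e^{M_Tt}\rho_0^{\gamma-1}$ tested against $\rho_0^{1-2\gamma}(\theta_\ell)_+$; the recursion is the standard polynomial one, $\mathcal Z_\ell\le C\ell^2(\ell-m)^{-3}\mathcal Z_m^{3/2}$, closed by a Stampacchia-type lemma---the superlinear exponent $3/2$ from Sobolev in $\mathbb R^3$ supplies the smallness, and no exponential gain $e^{-k/c_v}$ is invoked. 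The dangerous $\int\mathcal Q(\nabla u)\rho_0^{1-2\gamma}(\theta_\ell)_+$ is handled by $\|\rho_0^{-\gamma/2}\nabla u\|_{L^\infty_tL^6_x}$, which is exactly what the $\rho_0$-weighted ladder (an $L^\infty_tL^2_x$ bound on $\rho_0^{1-\gamma/2}\dot u$ followed by a weighted elliptic estimate) delivers; this is the sole place the extra initial hypotheses in (iv) enter. Your $v$-equation is morally the same object, but as written the coefficients $\rho^{-\gamma}\Delta\rho^{\gamma-1}$ and $\nabla\rho^{-1}$ require (H1)--(H2) for $\rho$ rather than $\rho_0$ and thus inherit the gap above; replacing $\rho$ by $\rho_0$ in the weight fixes this and recovers the paper's argument.
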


As a corollary of Theorems \ref{THMLOCAL}--\ref{THMMAIN}, we have the following:

\begin{corollary}
  \label{COR}
Assume that the conditions on the initial data in Theorem \ref{THMMAIN} hold.
Then, there is a positive time $T$ depending only on the initial
data but independent of $K_1$ and $K_2$ in (H1)--(H2), such that system
(\ref{EQRHO})--(\ref{EQTHETA}), subject to (\ref{IC}), has a unique strong solution $(\rho, u, \theta)$ in $\mathbb R^3\times(0,T)$, and that
the corresponding entropy given by (\ref{ENTROPY}) is uniformly bounded.
If assume in addition that the conditions in Theorem \ref{THMGLOBAL} hold, then the corresponding entropy of the solution
in Theorem \ref{THMGLOBAL} is uniformly bounded up to any finite time.
\end{corollary}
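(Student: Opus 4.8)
\textbf{Proof proposal for Corollary \ref{COR}.}

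The plan is to combine the local existence Theorem \ref{THMLOCAL}, the global existence Theorem \ref{THMGLOBAL}, and the a priori propagation Theorem \ref{THMMAIN}, the key point being that the existence time provided by Theorem \ref{THMLOCAL} depends only on the norms appearing in (\ref{AID}) and on the functions $g_1,g_2$ in (\ref{COM}), but \emph{not} on the constants $K_1,K_2$ quantifying the pointwise bounds (H1)--(H2) on $\nabla\rho_0$ and $\Delta\rho_0$. First I would invoke Theorem \ref{THMLOCAL}: since the hypotheses of Theorem \ref{THMMAIN} include (\ref{AID}) and (\ref{COM}) (and $q\in(3,6]$ is fixed), there is a time $T_*>0$, determined by the initial data through the quantities controlling local well-posedness, and a unique strong solution $(\rho,u,\theta)$ on $\mathbb R^3\times(0,T_*)$ in the sense of Definition \ref{DEF1}. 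One must check that $T_*$ can indeed be chosen independent of $K_1,K_2$: the constants $K_1,K_2$ enter neither the $H^1\cap W^{1,q}$ norm of $\rho_0$ nor the $D_0^1\cap D^2$ norms of $u_0,\theta_0$, nor the $L^2$ norms of $g_1,g_2$, so the standard a priori estimates underlying Theorem \ref{THMLOCAL} are uniform in them; set $T=T_*$.

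Next I would apply Theorem \ref{THMMAIN} to this very solution on $(0,T)$. Parts (iii) and (iv) of that theorem, under (H1)--(H2) together with $\underline s_0>-\infty$, $\overline s_0<+\infty$, and the weighted $L^2$ conditions on $u_0,\nabla u_0,\theta_0,\nabla\theta_0,\mathscr S_0$, yield
$$
-\infty<\inf_{\mathbb R^3\times(0,T)}s(x,t)\le\sup_{\mathbb R^3\times(0,T)}s(x,t)<+\infty,
$$
i.e.\ the entropy defined by (\ref{ENTROPY}) is uniformly bounded on $\mathbb R^3\times(0,T)$. Here I would note that the hypotheses of Corollary \ref{COR} are phrased as ``the conditions on the initial data in Theorem \ref{THMMAIN}'', which precisely encompass (\ref{AID}), (\ref{COM}), (H1), (H2), positivity of $\rho_0$, finiteness of $\underline s_0$ and $\overline s_0$, and the listed weighted integrability of $u_0,\nabla u_0,\theta_0,\nabla\theta_0,\mathscr S_0$; hence all of (i)--(iv) of Theorem \ref{THMMAIN} are available for the solution just constructed, and in particular (iii)--(iv) give the stated uniform bound on $s$.

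For the last assertion, suppose in addition that the smallness condition $\mathscr N_0\le\varepsilon_0$ of Theorem \ref{THMGLOBAL} holds (together with $2\mu>\lambda$). Then Theorem \ref{THMGLOBAL} furnishes a \emph{global} strong solution, i.e.\ a strong solution on $\mathbb R^3\times(0,T')$ for every finite $T'$. Fixing an arbitrary finite $T'$ and applying Theorem \ref{THMMAIN}(iii)--(iv) on $(0,T')$ — the hypotheses (H1), (H2), $\underline s_0>-\infty$, $\overline s_0<+\infty$, and the weighted $L^2$ bounds are all still in force, since they are assumptions on the initial data only — we obtain $\inf_{\mathbb R^3\times(0,T')}s>-\infty$ and $\sup_{\mathbb R^3\times(0,T')}s<+\infty$. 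As $T'$ is arbitrary, the entropy of the global solution is uniformly bounded up to any finite time. The main (indeed only) obstacle is the verification, in the first paragraph, that the local existence time $T_*$ from Theorem \ref{THMLOCAL} can be taken independent of $K_1$ and $K_2$; this is immediate from the statement of that theorem, whose time depends ``only on the initial data'' through norms in which $K_1,K_2$ do not appear, but it is the one place where a reader should be directed to check that no hidden dependence is introduced.
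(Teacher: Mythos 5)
Your proposal is correct and follows exactly the route the paper intends: the paper states Corollary \ref{COR} as an immediate consequence of Theorems \ref{THMLOCAL}--\ref{THMMAIN} without a separate proof, and your argument (local existence time from Theorem \ref{THMLOCAL} depending only on the norms in (\ref{AID})--(\ref{COM}), hence independent of $K_1,K_2$, followed by applying Theorem \ref{THMMAIN}(iii)--(iv) on $(0,T)$, and on every finite $(0,T')$ for the global solution of Theorem \ref{THMGLOBAL}) is precisely that deduction.
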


\begin{remark}[About the conditions (H1)--(H2)]
  (i) Assumptions (H1)--(H2) mean essentially that $\rho_0$ decays at a rate no faster than $O(\frac1{|x|^2})$ at
  far fields. Indeed, for initial density of the form 
  $$
  \rho_0(x)=\frac{K}{\langle x\rangle^{\ell}},\quad K\in(0,\infty), \ell\in[0,\infty), \quad\text{where }\langle x\rangle
  =(1+|x|^2)^\frac12,
  $$
  (H1)--(H2) hold if and only if $\ell\leq2$. For this reason, (H1)--(H2) will be called slow decay assumptions. Such conditions have already been introduced and employed in
  \cite{LIXINADV,LIXINCPAM} to deal with the corresponding problem in one dimension.

  (ii) Set
  $$
  \rho_0=\frac K{\langle x\rangle^\ell},\quad u_0\in C_c^\infty(\mathbb R^3),\quad\theta_0=\frac ARe^{\frac1{c_v}}\rho_0^\gamma,
  $$
with
   $$
  \gamma>\frac54,\quad \max\left\{\frac{1}{2(\gamma-1)},\frac32\right\}<\ell\leq2,\quad K\in(0,\infty).
  $$
  Then, (\ref{AID})--(\ref{COM}) and (H1)--(H2) hold. Therefore, the set of the initial data satisfying conditions in Theorems \ref{THMLOCAL}--\ref{THMMAIN} is not empty.
\end{remark}

\begin{remark}[Propagation of the inhomogeneous regularities]
Theorem \ref{THMMAIN} implies that the inhomogeneous regularity of the velocity can be propagated by
the compressible Navier--Stokes equations in the presence of far field vacuum, as long as the initial density decays sufficiently slowly.
If moreover $\frac{u_0}{\sqrt{\rho_0}}\in L^2$, then the propagation of the inhomogeneous regularity of the temperature holds also. Notably,
this is in sharp contrast to the case with compactly supported initial density \cite{LWX}, where the inhomogeneous regularity
can not be propagated even in a short time.
\end{remark}

\begin{remark}[About the compatibility condition (\ref{COM})]
(i) Assumption (\ref{COM}) is not required and condition (\ref{AID}) can be relaxed in the proof of (iii) of Theorem \ref{THMMAIN}. Besides, some weaker regularities on the
strong solutions than those stated in Definition \ref{DEF1} are sufficient to justify the arguments. In fact,
strong solutions established in \cite{LIZHENGNOCOM} can be chosen to guarantee (iii) of Theorem \ref{THMMAIN}.

(ii) The first condition in (\ref{COM}) is crucially used in the proof of (iv) of Theorem \ref{THMMAIN}, see Proposition \ref{PROP5}, but the second one in (\ref{COM}) is not used. However, since the second condition in (\ref{COM}) is assumed
in the local well-posedness result, Theorem \ref{THMLOCAL}, we still put this assumption in Theorem \ref{THMMAIN}.
It is an interesting question to see
if conclusion (iv) of Theorem \ref{THMMAIN} still holds without assuming (\ref{COM}).
\end{remark}

The main tools of proving Theorem \ref{THMMAIN} are some singularly weighted energy estimates
and an elaborate De Giorgi type iteration technique exploited in our previous works
\cite{LIXINADV,LIXINCPAM}, where the corresponding problems in one dimension were addressed.
Although singular weights chosen in the multidimensional case are exactly the same as those
in the one-dimensional case, in deriving the low order singular energy estimates;
however, in the multi-dimensional case as considered in this paper,
singularly weighted energy estimates for higher order derivatives are also required, which require elaborately chosen
singular weights, see Proposition \ref{PROP4} and Proposition \ref{PROP5}.
Moreover, in the one-dimensional case, the singularly weighted energy estimates for $u_x$ are carried out
in the Lagrangian coordinates via
the dynamical equation of the viscous flux $G$, as this equation can be clearly written out
in the Lagrangian coordinates. However, in higher dimensions, estimates of $\nabla u$ require the control of both the
effective viscous flux $G$ and the vorticity $\nabla\times v$ that are strongly coupled, for which the Lagrangian formulation
has no advantages. Thus, we will work directly with the momentum equations in the Eulerian coordinates and derive the singular energy
estimates for both $\nabla u$ and the material derivative $\dot u$, where we used the idea of Hoff to apply the material derivative
to the momentum to derive the governing system for $\dot u$, see Proposition \ref{PROP3} and Proposition \ref{PROP5}.

The rest of this paper is arranged as follow: in Section \ref{SECSINEST} and Section \ref{SECDEGIORI}, we derive important
a priori singular energy estimates and carry out the De Giorgi type iterations with singular weights, respectively, which are used
to prove Theorem \ref{THMMAIN} in Section \ref{SECPROOF}, and the last section is an appendix on some elementary calculations.

\section{singularly weighted a priori estimates}
\label{SECSINEST}
This section is devoted to carrying out some a priori energy estimates
with singular weights which will be used in the next section
to carry out suitable De Giorgi iterations. Throughout this section, as well as the next one, $(\rho, u, \theta)$ is always assumed
to be a strong solution to system (\ref{EQRHO})--(\ref{EQTHETA}) in $\mathbb R^3\times(0,T)$,
subject to (\ref{IC}), for a given positive time $T$. The initial density is assumed to satisfy (H1).

To simplify the notations, we set
\begin{equation}
  \phi(t):=1+\|\sqrt{\rho_0}u\|_\infty^2(t)+\|\nabla u\|_\infty^2(t),\quad \label{PHIT}
\Phi_T:=\int_0^T\phi dt.
\end{equation}

\subsection{A transport estimate on $\rho$}
\begin{proposition}
  \label{PROP1}
It holds that
$$
e^{-C_*\Phi_T}\rho_0(x)\leq \rho(x,t) \leq e^{C_*\Phi_T}\rho_0(x), \quad\forall(x,t)\in\mathbb R^3\times(0,T),
$$
for a positive constant $C_*$ depending only on $K_1$.
\end{proposition}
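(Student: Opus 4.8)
The plan is to pass to Lagrangian coordinates and then upgrade the resulting estimate --- which a priori only compares $\rho(\cdot,t)$ to $\rho_0$ \emph{along particle paths} --- to a genuinely pointwise-in-$x$ comparison, using (H1) to control the variation of $\rho_0$ along a characteristic. Since $(\rho,u,\theta)$ is a strong solution, the Sobolev embedding $W^{1,q}\hookrightarrow L^\infty$ ($q>3$) applied to $u\in L^2(0,T;D^{2,q})$, together with $u\in C([0,T];D_0^1\cap D^2)$, gives $\nabla u\in L^1(0,T;L^\infty)$; hence the velocity field is $L^1_t$-in-time Lipschitz in $x$, and the forward flow $\eta(\cdot;x)$ solving $\partial_t\eta(t;x)=u(\eta(t;x),t)$, $\eta(0;x)=x$, is well-defined and a bi-Lipschitz bijection of $\mathbb R^3$ for each $t\in[0,T]$. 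Solving the transport equation \eqref{EQRHO} along $\eta$ yields $\rho(\eta(t;x),t)=\rho_0(x)\exp\big(-\int_0^t(\mathrm{div}\,u)(\eta(s;x),s)\,ds\big)$, and since $\|\mathrm{div}\,u\|_\infty\leq C\|\nabla u\|_\infty\leq C\phi$ we obtain $e^{-C\Phi_T}\rho_0(x)\leq\rho(\eta(t;x),t)\leq e^{C\Phi_T}\rho_0(x)$ for an absolute constant $C$.

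It then remains to compare $\rho_0$ at the two endpoints $x_0$ and $\eta(t;x_0)$ of a characteristic. By (H1), $|\nabla\rho_0^{-1/2}|=\tfrac12\rho_0^{-3/2}|\nabla\rho_0|\leq \tfrac{K_1}{2}$, so $\rho_0^{-1/2}$ is globally Lipschitz with constant $K_1/2$. Setting $\zeta(t):=\rho_0^{-1/2}(\eta(t;x_0))$, this function is absolutely continuous and satisfies, a.e. in $t$, $|\zeta'(t)|\leq\tfrac{K_1}{2}|u(\eta(t;x_0),t)|$. The crucial point is that the far-field decay of $u$ is exactly measured by the weight $\sqrt{\rho_0}$ built into $\phi$ in \eqref{PHIT}: indeed $|u(\eta(t;x_0),t)|=\rho_0^{-1/2}(\eta(t;x_0))\,|(\sqrt{\rho_0}u)(\eta(t;x_0),t)|\leq\zeta(t)\|\sqrt{\rho_0}u\|_\infty(t)$, so $|\zeta'(t)|\leq\tfrac{K_1}{2}\|\sqrt{\rho_0}u\|_\infty(t)\,\zeta(t)$. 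Gr\"onwall's inequality (with $\int_0^t\|\sqrt{\rho_0}u\|_\infty\,ds\leq\Phi_T$) gives $e^{-K_1\Phi_T}\rho_0(\eta(t;x_0))\leq\rho_0(x_0)\leq e^{K_1\Phi_T}\rho_0(\eta(t;x_0))$. Finally, given $(x,t)$, write $x=\eta(t;x_0)$ with $x_0=\eta^{-1}(t;x)$; combining the Lagrangian formula $\rho(x,t)=\rho_0(x_0)\exp(-\int_0^t\mathrm{div}\,u)$ with this comparison yields $e^{-C_*\Phi_T}\rho_0(x)\leq\rho(x,t)\leq e^{C_*\Phi_T}\rho_0(x)$ with $C_*=C+K_1$, which depends only on $K_1$.

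The main obstacle is conceptual rather than computational: a bare Lagrangian bound controls $\rho$ only along trajectories, and without a condition like (H1) there is no reason for $\rho_0(x_0)$ and $\rho_0(x)$ to be comparable when the trajectory joining them is long --- which can happen at far fields, where $u$ need not decay. Assumption (H1) is precisely what renders $\rho_0^{-1/2}$ Lipschitz so that this comparison survives, and the weight $\sqrt{\rho_0}$ in the definition \eqref{PHIT} of $\phi$ is tailored so that the differential inequality for $\zeta$ closes by Gr\"onwall. One should also verify --- routinely, via Osgood/DiPerna--Lions theory given $\nabla u\in L^1_tL^\infty_x$ --- that the flow $\eta$, the Lagrangian representation of $\rho$, and the chain rule for the composition $\rho_0^{-1/2}\circ\eta$ are all legitimate at the regularity of Definition \ref{DEF1}.
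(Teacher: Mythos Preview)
Your proof is correct and rests on the same ingredients as the paper's: the Lagrangian representation of the continuity equation together with (H1), which makes $u\cdot\nabla\log\rho_0$ controllable by $\|\sqrt{\rho_0}u\|_\infty$. The paper organizes the argument more compactly by working directly with the ratio $J:=\rho/\rho_0$, which satisfies the transport equation
\[
\partial_tJ+u\cdot\nabla J+\Big(\mathrm{div}\,u+u\cdot\tfrac{\nabla\rho_0}{\rho_0}\Big)J=0,
\]
so that along a characteristic $J(X(x,t),t)=\exp\big(-\int_0^t(\mathrm{div}\,u+u\cdot\nabla\log\rho_0)(X(x,\tau),\tau)\,d\tau\big)$; (H1) then bounds $|u\cdot\nabla\log\rho_0|\leq K_1|\sqrt{\rho_0}u|$ and the conclusion follows in one step. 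Your two-stage scheme (first the bare Lagrangian formula for $\rho$, then a separate Gr\"onwall for $\zeta=\rho_0^{-1/2}\circ\eta$) is exactly the same computation unpacked: differentiating $\log J$ along the flow produces precisely the sum of your two contributions. The paper's packaging avoids the need to invert the flow and recombine at the end, but your version makes more explicit why the weight $\sqrt{\rho_0}$ in $\phi$ is the natural one.
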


\begin{proof}
Define $J=\frac{\rho}{\rho_0}.$ Then, (\ref{EQRHO}) implies that
\begin{equation*}
  \partial_tJ+u\cdot\nabla J+\left(\text{div}\,u+u\cdot\frac{\nabla\rho_0}{\rho_0}\right)J=0.
\end{equation*}
Let $X(x,t)$ be the particle path starting from $x$, that is,
$$
\left\{
\begin{array}{l}
  \partial_tX(x,t)=u(X(x,t),t),\\
  X(x,0)=x.
\end{array}
\right.
$$
Then,
\begin{equation*}
  J(X(x,t),t)=e^{-\int_0^t\left(\text{div}\,u+u\cdot\frac{\nabla\rho_0}{\rho_0}\right)(X(x,\tau),\tau)d\tau}.
\end{equation*}
Thanks to this and using the fact that the mapping $X(\cdot,t): \mathbb R\rightarrow\mathbb R$ is bijective for any $t\in(0,T)$, one can easily obtain
\begin{equation*}
  e^{-\int_0^T\left(\|\text{div}\,u\|_\infty+\left\|u\cdot\frac{\nabla\rho_0}{\rho_0}\right\|_\infty\right)dt}
  \leq J\leq   e^{\int_0^T\left(\|\text{div}\,u\|_\infty+\left\|u\cdot\frac{\nabla\rho_0}{\rho_0}\right\|_\infty\right)dt},\quad\mbox{on }\mathbb R^3\times(0,T),
\end{equation*}
which leads to the conclusion by the Cauchy inequality and assumption (H1).
\end{proof}

\subsection{Singularly weighted $L^\infty(L^2)$ estimates}
\begin{proposition}
\label{PROP2}
It holds for any $\alpha>0$ that
$$
\sup_{0\leq t\leq T}\|(\rho_0^{\frac{1-\alpha}{2}}u,\rho_0^{1-\frac\alpha2}\theta)\|_2^2
+\int_0^T\|(\rho_0^{-\frac\alpha2}\nabla u,\rho_0^{\frac{1-\alpha}{2}}\nabla\theta)\|_2^2dt\leq
C\|(\rho_0^{\frac{1-\alpha}{2}}u_0,\rho_0^{1-\frac\alpha2}\theta_0)\|_2^2
$$
for a positive constant $C$ depending only on $\alpha, c_v, \mu, \lambda, \kappa, K_1,$ and $\Phi_T$.
\end{proposition}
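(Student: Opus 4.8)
The plan is to run a weighted basic energy estimate: test the momentum equation \eqref{EQU} against $\rho_0^{-\alpha}u$ and the temperature equation \eqref{EQTHETA} against $\rho_0^{1-\alpha}\theta$, integrate over $\mathbb R^3$, and add the two resulting identities. These two powers of the weight are dictated by Proposition \ref{PROP1}: using the continuity equation \eqref{EQRHO}, testing \eqref{EQU} against $\rho_0^{-\alpha}u$ produces $\frac12\frac{d}{dt}\int\rho_0^{-\alpha}\rho|u|^2\,dx$, and $\int\rho_0^{-\alpha}\rho|u|^2\,dx$ is, by Proposition \ref{PROP1}, comparable to $\|\rho_0^{\frac{1-\alpha}{2}}u\|_2^2$ (with constants $e^{\pm C_*\Phi_T}$), while the viscous term, after integration by parts, contributes $\int\rho_0^{-\alpha}\big(\mu|\nabla u|^2+(\mu+\lambda)(\text{div}\,u)^2\big)\,dx\geq c_0\|\rho_0^{-\frac\alpha2}\nabla u\|_2^2$ with $c_0:=\min\{\mu,4\mu+3\lambda\}>0$ (positivity from $\mu>0$ and $2\mu+3\lambda>0$); likewise, testing \eqref{EQTHETA} against $\rho_0^{1-\alpha}\theta$ produces $\frac{c_v}2\frac{d}{dt}\int\rho_0^{1-\alpha}\rho|\theta|^2\,dx$ with $\int\rho_0^{1-\alpha}\rho|\theta|^2\,dx$ comparable to $\|\rho_0^{1-\frac\alpha2}\theta\|_2^2$, plus the dissipation $\kappa\|\rho_0^{\frac{1-\alpha}{2}}\nabla\theta\|_2^2$. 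All the formal integrations by parts below are to be justified by first inserting a cutoff $\chi(x/R)$, performing the estimates, and letting $R\to\infty$, which is legitimate by the regularity class of Definition \ref{DEF1} and the far-field decay of $u$ and $\theta$.

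The key step is the control of the terms in which a derivative falls on the singular weight $\rho_0^{\beta}$. For any $\beta\in\mathbb R$, hypothesis (H1) gives the pointwise bound $|\nabla\rho_0^{\beta}|=|\beta|\rho_0^{\beta-1}|\nabla\rho_0|\leq|\beta|K_1\rho_0^{\beta+\frac12}$: each such commutator costs exactly one extra half-power of $\rho_0$, and this half-power is precisely the margin that makes the scheme close. For instance, the commutator from the transport part of the momentum identity, $-\frac12\int|u|^2\rho\,u\cdot\nabla\rho_0^{-\alpha}\,dx$, is by (H1) and Proposition \ref{PROP1} bounded by $C\int\rho_0^{\frac32-\alpha}|u|^3\,dx\leq C\int\big(\sqrt{\rho_0}\,|u|\big)\big(\rho_0^{\frac{1-\alpha}{2}}|u|\big)^2\,dx\leq C\phi(t)\,\|\rho_0^{\frac{1-\alpha}{2}}u\|_2^2$ by the definition \eqref{PHIT} of $\phi$, and the analogous commutator in the $\theta$-identity is bounded by $C\phi(t)\,\|\rho_0^{1-\frac\alpha2}\theta\|_2^2$. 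The commutators coming from the elliptic parts, such as $\mu\int\nabla u:(u\otimes\nabla\rho_0^{-\alpha})\,dx$ and $\kappa\int\theta\,\nabla\theta\cdot\nabla\rho_0^{1-\alpha}\,dx$, are, again by (H1), at most $C\int\rho_0^{-\frac\alpha2}|\nabla u|\,\rho_0^{\frac{1-\alpha}{2}}|u|\,dx$ and $C\int\rho_0^{\frac{1-\alpha}{2}}|\nabla\theta|\,\rho_0^{1-\frac\alpha2}|\theta|\,dx$ respectively---precisely the dissipation weight times the energy weight---so Cauchy--Schwarz disposes of them via a small multiple of the dissipation plus $C\|\rho_0^{\frac{1-\alpha}{2}}u\|_2^2$ (resp.\ $C\|\rho_0^{1-\frac\alpha2}\theta\|_2^2$), which is harmless after time integration since $T\leq\Phi_T$.

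Next I would handle the pressure--work coupling and the frictional heating. Writing $p=R\rho\theta$ and using Proposition \ref{PROP1}, the pressure term of \eqref{EQU} becomes, after integration by parts, $-R\int\rho_0^{-\alpha}\rho\theta\,\text{div}\,u\,dx-R\int\rho\theta\,u\cdot\nabla\rho_0^{-\alpha}\,dx$, whose modulus is at most $C\int\rho_0^{1-\frac\alpha2}|\theta|\,\rho_0^{-\frac\alpha2}|\nabla u|\,dx+C\int\rho_0^{1-\frac\alpha2}|\theta|\,\rho_0^{\frac{1-\alpha}2}|u|\,dx$, where by (H1) the weight exponents balance exactly; Cauchy--Schwarz then sends a small part into the dissipation $c_0\|\rho_0^{-\frac\alpha2}\nabla u\|_2^2$ and the remainder into $C\big(\|\rho_0^{1-\frac\alpha2}\theta\|_2^2+\|\rho_0^{\frac{1-\alpha}2}u\|_2^2\big)$. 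The work term of \eqref{EQTHETA} equals $R\int\rho_0^{1-\alpha}\rho\theta^2\,\text{div}\,u\,dx\leq C\|\nabla u\|_\infty\|\rho_0^{1-\frac\alpha2}\theta\|_2^2\leq C\phi(t)\|\rho_0^{1-\frac\alpha2}\theta\|_2^2$, and the heating term satisfies $\int\mathcal Q(\nabla u)\rho_0^{1-\alpha}\theta\,dx\leq C\|\nabla u\|_\infty\|\rho_0^{-\frac\alpha2}\nabla u\|_2\|\rho_0^{1-\frac\alpha2}\theta\|_2$ and is absorbed as before. Adding the two identities and setting $Y(t):=\int\rho_0^{-\alpha}\rho|u|^2\,dx+c_v\int\rho_0^{1-\alpha}\rho|\theta|^2\,dx$, one reaches a differential inequality of the form
\[
\frac{d}{dt}Y(t)+c_1\big(\|\rho_0^{-\frac\alpha2}\nabla u\|_2^2+\|\rho_0^{\frac{1-\alpha}2}\nabla\theta\|_2^2\big)\leq C\phi(t)\,Y(t),
\]
so that Gronwall's inequality, together with $\int_0^T\phi\,dt=\Phi_T$ and $Y(0)=\|\rho_0^{\frac{1-\alpha}{2}}u_0\|_2^2+c_v\|\rho_0^{1-\frac\alpha2}\theta_0\|_2^2$, yields the assertion after one further use of Proposition \ref{PROP1} to pass from $Y$ back to the stated norms.

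The hard part will be the weight commutators: the multipliers $\rho_0^{-\alpha}u$ and $\rho_0^{1-\alpha}\theta$ are genuinely singular at the far-field vacuum, and the energy method closes only because (H1) forces the logarithmic gradient $\nabla\rho_0^{\beta}/\rho_0^{\beta}$ to be $O(\sqrt{\rho_0})$---exactly the half-power of $\rho_0$ needed to reabsorb every commutator either into the dissipation or into $\phi(t)Y(t)$. A secondary, routine point will be making the formal computations rigorous through the cutoff-and-limit argument indicated above.
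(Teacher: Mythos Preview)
Your proposal is correct and follows essentially the same route as the paper: test \eqref{EQU} with the weight $\rho_0^{-\alpha}u$, test \eqref{EQTHETA} with $\rho_0^{1-\alpha}\theta$, use (H1) to control every commutator with the weight by exactly one extra half-power of $\rho_0$, absorb the heating term via $\|\nabla u\|_\infty$, sum, and apply Gr\"onwall. All of your term-by-term estimates match the paper's.

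One technical refinement you should be aware of for the rigorous justification: the paper uses not only the spatial cutoff $\chi_R$ but also a regularization $\rho_{0\delta}=\rho_0+\delta$ of the weight, taking first $R\to\infty$ and then $\delta\downarrow0$. The reason is that the cutoff commutator produces terms supported on the annulus $B_{2R}\setminus B_R$ carrying the factor $\rho_0^{-\alpha}|\nabla\chi_R|$, and since $\rho_0\to0$ at infinity this factor is not controlled by the unweighted regularities of Definition~\ref{DEF1} alone (the weighted norms you are trying to prove finite are not available a priori). With $\rho_{0\delta}$ the annulus contribution is bounded by $(R\delta^\alpha)^{-1}$ times unweighted quantities and vanishes as $R\to\infty$ for each fixed $\delta$; monotone convergence then removes $\delta$. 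Your sentence ``legitimate by the regularity class \ldots and the far-field decay'' glosses over this, so for a complete proof you should insert the $\delta$-regularization as well.
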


\begin{proof}
  For any fixed $0<\delta<1$ set $\rho_{0\delta}=\rho_0+\delta$. Choose a nonnegative cut off function $\chi\in C_0^\infty(B_2)$ satisfying $\chi\equiv1$ on $B_1$ and set $\chi_R(x)=\phi(\frac xR)$ for $R>0$.

  Multiplying (\ref{EQU}) with $\rho_{0\delta}^{-\alpha}u\chi_R^2$ and integrating over $\mathbb R^3$ yield
  \begin{equation}
    \int [\rho(\partial_tu+(u\cdot\nabla)u)-(\mu\Delta u+(\mu+\lambda)\nabla\text{div}\,u)]\cdot\rho_{0\delta}^{-\alpha}u\chi_R^2dx=
    -\int \nabla p\cdot\rho_{0\delta}^{-\alpha} u\chi_R^2dx.
    \label{P2.1'}
  \end{equation}
  (\ref{EQRHO}) implies that
  \begin{eqnarray}
    \int \rho(\partial_tu+(u\cdot\nabla)u)\cdot\rho_{0\delta}^{-\alpha} u\chi_R^2dx
      = \frac12\int\rho\left(\partial_t|u|^2+u\cdot\nabla|u|^2\right)\rho_{0\delta}^{-\alpha}\chi_R^2dx\nonumber\\
     = \frac12\frac{d}{dt}\int\rho|u|^2\rho_{0\delta}^{-\alpha}\chi_R^2dx-\frac12\int\rho u\cdot\nabla\left(\rho_{0\delta}^{-\alpha}\chi_R^2\right)|u|^2dx.\label{P2.2''}
%    \frac12\frac{d}{dt}\|\sqrt\rho\rho_0^{-\frac\alpha2}u\|_2^2-C\|\sqrt{\rho_0}u\|_\infty\|
%    \rho_0^{\frac{1-\alpha}{2}}u\|_2^2.
  \end{eqnarray}
  By (H1), it holds that
  \begin{eqnarray}
    \left|\nabla\left(\rho_{0\delta}^{-\alpha}\chi_R^2\right)\right|&=&\left|-\alpha\rho_{0\delta}^{-(\alpha+1)}
    \nabla\rho_{0\delta}\chi_R^2+2\rho_{0\delta}^{-\alpha}\chi_R\nabla\chi_R\right|\nonumber\\
    &\leq&C\left(\rho_{0\delta}^{-(\alpha+1)}\rho_0^\frac32\chi_R^2+(R\delta^\alpha)^{-1}\chi_R 1_{\mathcal C_R}\right)\nonumber\\
    &\leq&C\left(\rho_{0\delta}^{-\alpha}\sqrt{\rho_0}\chi_R^2+(R\delta^\alpha)^{-1}\chi_R1_{\mathcal C_R}\right),\label{AD1}
  \end{eqnarray}
  where $1_{\mathcal C_R}$ is the characteristic function of the set $\mathcal C_R:=B_{2R}\setminus B_R$.
  This and the Sobolev embedding inequality yields that
  \begin{eqnarray*}
    \left|\int\rho u\cdot\nabla\left(\rho_{0\delta}^{-\alpha}\chi_R^2\right)|u|^2dx
    \right|&\leq&
    C\int\rho|u|^3\left(\rho_{0\delta}^{-\alpha}\sqrt{\rho_0}\chi_R^2+(R\delta^\alpha)^{-1}\chi_R\right)dx\\
    &\leq&C\|\sqrt{\rho_0}u\|_\infty\|\sqrt\rho\rho_{0\delta}^{-\frac\alpha2}u\chi_R\|_2^2+C(R\delta^\alpha)^{-1}\|u\|_\infty
    \|\sqrt\rho u\|_2^2\\
    &\leq&C\phi(t)\|\sqrt\rho\rho_{0\delta}^{-\frac\alpha2}u\chi_R\|_2^2+C(R\delta^\alpha)^{-1}\|u\|_\infty
    \|\sqrt\rho u\|_2^2.
  \end{eqnarray*}
  Substituting this into (\ref{P2.2''}) yields
    \begin{eqnarray}
    &&\int \rho(\partial_tu+(u\cdot\nabla)u)\cdot\rho_{0\delta}^{-\alpha} u\chi_R^2dx\nonumber\\
    &\geq&\frac12\frac{d}{dt}\|\sqrt\rho\rho_{0\delta}^{-\frac\alpha2}u\chi_R\|_2^2 -C\phi(t)
    \|\sqrt\rho\rho_{0\delta}^{-\frac\alpha2}u\chi_R\|_2^2-C(R\delta^\alpha)^{-1}\|u\|_\infty
    \|\sqrt\rho u\|_2^2.\label{P2.2'}
  \end{eqnarray}
It follows from integrating by parts that
\begin{equation}
    -\int \Delta u\cdot\rho_{0\delta}^{-\alpha} u\chi_R^2dx=
    \int|\nabla u|^2\rho_{0\delta}^{-\alpha}\chi_R^2dx+\int\partial_iu\cdot u\partial_i\left(\rho_{0\delta}^{-\alpha}
    \chi_R^2\right)dx.\label{1028-1}
\end{equation}
It follows from (\ref{AD1}), the H\"older, Sobolev, and Cauchy inequalities that
\begin{eqnarray*}
    &&\left|\int\partial_iu\cdot u\partial_i\left(\rho_{0\delta}^{-\alpha}
    \chi_R^2\right)dx\right|\\
    &\leq& C\int|u||\nabla u|\left(\rho_{0\delta}^{-\alpha}\sqrt{\rho_0}\chi_R^2+(R\delta^\alpha)^{-1}\chi_R1_{\mathcal C_R}\right)dx\\
    &\leq&C\|\rho_{0\delta}^{-\frac\alpha2}\nabla u\chi_R\|_2\|\sqrt{\rho_0}\rho_{0\delta}^{-\frac\alpha2}
    u\chi_R\|_2+C(R\delta^\alpha)^{-1}\|\nabla u\|_{L^2(\mathcal C_{R})}\|u\|_6\|\chi_R\|_3\\
    &\leq&\frac14\|\rho_{0\delta}^{-\frac\alpha2}\nabla u\chi_R\|_2^2+C\|\sqrt{\rho_0}\rho_{0\delta}^{-\frac\alpha2}
    u\chi_R\|_2^2+C\delta^{-\alpha}\|\nabla u\|_{L^2(\mathcal C_{R})}\|\nabla u\|_2,
\end{eqnarray*}
where $\|\chi_R\|_3\leq CR$ was used. Substituting this into (\ref{1028-1}) yields
\begin{eqnarray}
   -\int \Delta u\cdot\rho_{0\delta}^{-\alpha} u\chi_R^2dx
  &\geq&\frac34\|\rho_{0\delta}^{-\frac\alpha2}\nabla u\chi_R\|_2^2-C\|\sqrt{\rho_0}\rho_{0\delta}^{-\frac\alpha2}u\chi_R\|_2^2\nonumber\\
  &&-C\delta^{-\alpha}\|\nabla u\|_{L^2(\mathcal C_{R})}\|\nabla u\|_2.\label{P2.3'}
\end{eqnarray}
Similarly, one has
  \begin{eqnarray}\label{P2.4'}
    -\int \nabla\text{div}\,u\cdot\rho_{0\delta}^{-\alpha} u\chi_R^2dx&\geq&\frac12\|\rho_{0\delta}^{-\frac\alpha2}\text{div}\,u\chi_R\|_2^2
     -C\|\sqrt{\rho_0}\rho_{0\delta}^{-\frac\alpha2}u\chi_R\|_2^2\nonumber\\
     &&-C\delta^{-\alpha}\|\nabla u\|_{L^2(\mathcal C_{R})}\|\nabla u\|_2.
  \end{eqnarray}
Thanks to (\ref{AD1}), it follows from integrating by parts and the Cauchy inequality that
  \begin{eqnarray*}
    -\int \nabla p\cdot\rho_{0\delta}^{-\alpha}u\chi_R^2dx&=&\int p\Big[u\cdot\nabla\left(\rho_{0\delta}^{-\alpha}
    \chi_R^2\right)+
    \text{div}\,u\rho_{0\delta}^{-\alpha}\chi_R^2\Big]dx\nonumber\\
    &\leq&C\int\rho\theta\left[|\text{div}u|\rho_{0\delta}^{-\alpha}\chi_R^2+|u|
    \left(\rho_{0\delta}^{-\alpha}\sqrt{\rho_0}\chi_R^2+(R\delta^\alpha)^{-1}\chi_R\right)\right]dx\nonumber\\
    &\leq&\frac{\mu+\lambda}2\|\text{div}u\rho_{0\delta}^{-\frac\alpha2}\chi_R\|_2^2
    +C\left(\|\rho\rho_{0\delta}^{-\frac{\alpha}{2}}\theta\chi_R\|_2^2+\|\sqrt\rho\rho_{0\delta}^{-\frac{\alpha}2}u\chi_R
    \|_2^2\right)\\
    &&+C(R\delta^\alpha)^{-1}(\|\sqrt\rho u\|_2^2+\|\sqrt\rho\theta\|_2^2).
  \end{eqnarray*}
  Substituting (\ref{P2.2'}), (\ref{P2.3'})--(\ref{P2.4'}), and the above inequality into (\ref{P2.1'}), and by Proposition \ref{PROP1}, one gets
  \begin{eqnarray}
  &&\frac{d}{dt}\|\sqrt\rho\rho_{0\delta}^{-\frac\alpha2}u\chi_R\|_2^2
   +1.5\mu\|\rho_{0\delta}^{-\frac\alpha2}\nabla u\chi_R\|_2^2\nonumber\\
  &\leq&  C \phi(t)(\|\rho\rho_{0\delta}^{-\frac{\alpha}{2}}\theta\chi_R\|_2^2
   +\|\sqrt\rho\rho_{0\delta}^{-\frac{\alpha}2}u\chi_R\|_2^2)+C \delta^{-\alpha}\|\nabla u\|_{L^2(\mathcal C_{R})}\|\nabla u\|_2\nonumber\\
   &&+C (R\delta^\alpha)^{-1}(\|\sqrt\rho u\|_2^2+\|\sqrt\rho\theta\|_2^2+\|u\|_\infty
    \|\sqrt\rho u\|_2^2).    \label{P2.6'}
  \end{eqnarray}

  Multiplying (\ref{EQTHETA}) with $\rho_0\rho_{0\delta}^{-\alpha}\theta\chi_R^2$ and integrating over $\mathbb R^3$ yield
  \begin{equation}
    \int [c_v\rho(\partial_t\theta+u\cdot\nabla\theta)-\kappa\Delta\theta]\rho_0\rho_{0\delta}^{-\alpha}\theta\chi_R^2
    dx=\int (\mathcal Q(\nabla u)-p\text{div}\,u)\rho_0\rho_{0\delta}^{-\alpha}\theta\chi_R^2 dx. \label{AT-0}
  \end{equation}
  By (\ref{EQRHO}), one deduces
  \begin{eqnarray}
   \int \rho(\partial_t\theta+u\cdot\nabla\theta)\rho_0\rho_{0\delta}^{-\alpha}\theta\chi_R^2 dx = \frac12\int \rho(\partial_t\theta^2+u\cdot\nabla\theta^2)\rho_0\rho_{0\delta}^{-\alpha}\chi_R^2 dx \nonumber\\
     = \frac12\frac{d}{dt}\|\sqrt{\rho\rho_0}\rho_{0\delta}^{-\frac{\alpha}{2}}\theta\chi_R\|_2^2
    -\frac12\int\rho u\theta^2\nabla(\rho_0\rho_{0\delta}^{-\alpha}\chi_R^2)dx. \label{AT-1}
 \end{eqnarray}
 It follows from direct calculations and (H1) that
 \begin{eqnarray}
   |\nabla(\rho_0\rho_{0\delta}^{-\alpha}\chi_R^2)|&=&|\nabla\rho_0\rho_{0\delta}^{-\alpha}\chi_R^2-\alpha\rho_0\rho_{0\delta}^{-(\alpha+1)}
   \nabla\rho_0\chi_R^2+2\rho_0\rho_{0\delta}^{-\alpha}\chi_R\nabla\chi_R|\nonumber\\
   &\leq&C(\rho_0^\frac32\rho_{0\delta}^{-\alpha}\chi_R^2+\rho_0\rho_{0\delta}^{-\alpha}\chi_R|\nabla\chi_R|).\label{AT-2}
 \end{eqnarray}
 Therefore,
 \begin{eqnarray*}
   &&\left|\int\rho u\theta^2\nabla(\rho_0\rho_{0\delta}^{-\alpha}\chi_R^2)dx\right|\\
   &\leq& C\int\rho|u|\theta^2(\rho_0^\frac32\rho_{0\delta}^{-\alpha}\chi_R^2+\rho_0\rho_{0\delta}^{-\alpha}\chi_R|\nabla\chi_R|)dx\\
   &\leq& C\|\sqrt{\rho_0}u\|_\infty\|\sqrt{\rho\rho_0}\rho_{0\delta}^{-\frac{\alpha}{2}}\theta\chi_R\|_2^2+C(R\delta^\alpha)^{-1}
   \|{\rho_0}u\|_\infty\|\sqrt\rho\theta\|_2^2\\
   &\leq& C\phi(t)\|\sqrt{\rho\rho_0}\rho_{0\delta}^{-\frac{\alpha}{2}}\theta\chi_R\|_2^2+C(R\delta^\alpha)^{-1}
   \phi(t)\|\sqrt\rho\theta\|_2^2.
 \end{eqnarray*}
 Substituting the above inequality into (\ref{AT-1}) gives
 \begin{eqnarray}
   \int \rho(\partial_t\theta+u\cdot\nabla\theta)\rho_0\rho_{0\delta}^{-\alpha}\theta\chi_R^2 dx
   &\geq&\frac12\frac{d}{dt}\|\sqrt{\rho\rho_0}\rho_{0\delta}^{-\frac{\alpha}{2}}\theta\chi_R\|_2^2
   -C\phi(t)\|\sqrt{\rho\rho_0}\rho_{0\delta}^{-\frac{\alpha}{2}}\theta\chi_R\|_2^2\nonumber\\
   &&-C(R\delta^\alpha)^{-1}
   \phi(t)\|\sqrt\rho\theta\|_2^2.\label{AT-3}
 \end{eqnarray}
 Integration by parts yields
 \begin{equation}
 -\int \Delta\theta\rho_0\rho_{0\delta}^{-\alpha}\theta\chi_R^2  dx  = \int|\nabla\theta|^2\rho_0\rho_{0\delta}^{-\alpha}\chi_R^2dx+\int\theta\nabla\theta\cdot
 \nabla(\rho_0\rho_{0\delta}^{-\alpha}\chi_R^2)dx. \label{AT-4}
  \end{equation}
 It follows from (\ref{AT-2}) and the Cauchy inequality that
  \begin{eqnarray*}
  &&\left|\int\theta\nabla\theta\cdot\nabla(\rho_0\rho_{0\delta}^{-\alpha}\chi_R^2)dx\right|\\
  &\leq&C\int\theta|\nabla\theta|(\rho_0^\frac32\rho_{0\delta}^{-\alpha}\chi_R^2
  +\rho_0\rho_{0\delta}^{-\alpha}\chi_R|\nabla\chi_R|)dx\\
  &\leq&\frac12\|\sqrt\rho_0\rho_{0\delta}^{-\frac\alpha2}\nabla\theta\chi_R\|_2^2+C\|\rho_0
  \rho_{0\delta}^{-\frac\alpha2}\theta\chi_R\|_2^2+C\|\sqrt{\rho_0}\rho_{0\delta}^{-\frac\alpha2}\theta\nabla\chi_R\|_2^2\\
  &\leq&\frac12\|\sqrt\rho_0\rho_{0\delta}^{-\frac\alpha2}\nabla\theta\chi_R\|_2^2+C\|\rho_0\rho_{0\delta}^{-\frac\alpha2}
  \theta\chi_R\|_2^2+C(R^2\delta^\alpha)^{-1}\|\sqrt{\rho_0}\theta\|_2^2,
  \end{eqnarray*}
  which and (\ref{AT-4}) lead to
  \begin{equation}
    -\int \Delta\theta\rho_0\rho_{0\delta}^{-\alpha}\theta\chi_R^2  dx
    \geq \frac12\|\sqrt\rho_0\rho_{0\delta}^{-\frac\alpha2}\nabla\theta\chi_R\|_2^2-C\|\rho_0\rho_{0\delta}^{-\frac\alpha2}
  \theta\chi_R\|_2^2-C(R^2\delta^\alpha)^{-1}\|\sqrt{\rho_0}\theta\|_2^2.\label{AT-5}
  \end{equation}
  Note that the Cauchy inequality yields that
  \begin{eqnarray}
    &&\int\mathcal Q(\nabla u) \rho_0\rho_{0\delta}^{-\alpha}\theta \chi_R^2 dx\nonumber\\
    &\leq& C\int|\nabla u|^2\rho_0\rho_{0\delta}^{-\alpha}\theta \chi_R^2dx
    \leq C\|\nabla u\|_\infty\|\rho_{0\delta}^{-\frac\alpha2}\nabla u\chi_R\|_2\|\rho_0\rho_{0\delta}^{-\frac\alpha2}\theta\chi_R\|_2\nonumber\\
    &\leq&\frac\mu2\|\rho_{0\delta}^{-\frac\alpha2}\nabla u\chi_R\|_2^2+ C\phi(t)\|\rho_0\rho_{0\delta}^{-\frac\alpha2}\theta\chi_R\|_2^2 \label{AT-6}
  \end{eqnarray}
  and
  \begin{eqnarray}
    &&-\int p\text{div}\,u \rho_0\rho_{0\delta}^{-\alpha}\theta \chi_R^2dx\leq
    C\int\rho\theta|\text{div}u|\rho_0\rho_{0\delta}^{-\alpha}\theta \chi_R^2dx\nonumber\\
    &\leq& C\|\nabla u\|_\infty \|\sqrt{\rho\rho_0}\rho_{0\delta}^{-\frac{\alpha}2}\theta\chi_R\|_2^2
\leq C\phi(t) \|\sqrt{\rho\rho_0}\rho_{0\delta}^{-\frac{\alpha}2}\theta\chi_R\|_2^2.\label{AT-7}
  \end{eqnarray}
Substituting (\ref{AT-3}) and (\ref{AT-5})--(\ref{AT-7}) into (\ref{AT-0}) and by Proposition \ref{PROP1},
one gets that
  \begin{eqnarray}
    &&c_v\frac{d}{dt}\|\sqrt{\rho\rho_0}\rho_{0\delta}^{-\frac{\alpha}2}\theta\chi_R\|_2^2+\kappa \|\sqrt{\rho_0}\rho_{0\delta}^{-\frac{\alpha}{2}}\nabla\theta\chi_R\|_2^2\nonumber\\
    &\leq&\frac\mu2\|\rho_{0\delta}^{-\frac{\alpha}2}\nabla u\chi_R\|_2^2+C\phi(t) \|\sqrt{\rho\rho_0}\rho_{0\delta}^{-\frac{\alpha}2}\theta\chi_R\|_2^2\nonumber\\
    &&+C[(R\delta^\alpha)^{-1}\phi(t)+(R^2\delta^\alpha)^{-1}]\|\sqrt{\rho_0}\theta\|_2^2.\label{P2.11'}
  \end{eqnarray}

 Summing (\ref{P2.6'}) with (\ref{P2.11'}), it follows from Proposition \ref{PROP1} that
  \begin{eqnarray*}
   &&\mu \|\rho_{0\delta}^{-\frac\alpha2}\nabla u\chi_R\|_2^2+\kappa \|\sqrt{\rho_0}\rho_{0\delta}^{-\frac{\alpha}{2}}\nabla\theta\chi_R\|_2^2\\
   && +\frac{d}{dt}( \|\sqrt\rho\rho_{0\delta}^{-\frac\alpha2}u\chi_R\|_2^2+c_v\|\sqrt{\rho\rho_0}\rho_{0\delta}^{-\frac{\alpha}{2}}
   \theta\chi_R\|_2^2)\\
    &\leq&C\phi(t)( \|\sqrt\rho\rho_{0\delta}^{-\frac\alpha2}u\chi_R\|_2^2+c_v\|\sqrt{\rho\rho_0}\rho_{0\delta}^{-\frac{\alpha}{2}}
   \theta\chi_R\|_2^2)+C\delta^{-\alpha}\|\nabla u\|_{L^2(\mathcal C_{R})}\|\nabla u\|_2\\
    &&+C(R\delta^\alpha)^{-1}(\|\sqrt\rho u\|_2^2 +\|u\|_\infty
    \|\sqrt\rho u\|_2^2)\nonumber\\
    &&+C[(R\delta^\alpha)^{-1}\phi(t)+(R^2\delta^\alpha)^{-1}]\|\sqrt{\rho_0}\theta\|_2^2.
  \end{eqnarray*}
  Applying the the Gr\"onwall inequality to the above and noticing that
  \begin{eqnarray*}
  \int_0^T \|\nabla u\|_{L^2(\mathcal C_{R})}\|\nabla u\|_2 dt\rightarrow0,\quad\mbox{as }R\rightarrow\infty, \\
  (1+\|u\|_\infty) \|\sqrt\rho u\|_2^2+\phi(t)\|\sqrt\rho\theta\|_2^2)\in L^1((0,T)),
  \end{eqnarray*}
  guaranteed by the regularities of $(\rho, u, \theta)$, one gets by taking $R\rightarrow\infty$ and by Proposition \ref{PROP1} that
  \begin{align*}
  ( \|\sqrt{\rho_0}\rho_{0\delta}^{-\frac\alpha2}u \|_2^2+c_v\| \rho_0 \rho_{0\delta}^{-\frac{\alpha}{2}}
   \theta \|_2^2)(t)
   & +\int_0^t(\|\rho_{0\delta}^{-\frac\alpha2}\nabla u \|_2^2+ \|\sqrt{\rho_0}\rho_{0\delta}^{-\frac{\alpha}{2}}\nabla\theta \|_2^2)ds\\
    \leq& \ \ C\|(\rho_0^{\frac{1-\alpha}{2}}u_0,\rho_0^{1-\frac\alpha2}\theta_0)\|_2^2.
  \end{align*}
  Taking $\delta\downarrow0$ to the above inequality and by the monotone convergence theorem,
  the conclusion follows.
\end{proof}

\begin{remark}
\label{RKTESTING}
The basic idea of proving Proposition \ref{PROP2} is to choose $\rho_0^{-\alpha}u$ and
$\rho_0^{1-\alpha}\theta$ as testing functions to (\ref{EQU}) and (\ref{EQTHETA}), respectively, in the fashion
of integrating by parts formally on the whole space.
This argument is justified rigorously by choosing $\rho_{0\delta}^{-\alpha}u\chi_R^2$ and
$\rho_0\rho_{0\delta}^{-\alpha}\theta\chi_R^2$ as testing functions to (\ref{EQU}) and (\ref{EQTHETA}), respectively,
and taking the limits as
$R\rightarrow\infty$ and $\delta\rightarrow0$, successively, as presented in the proof of Proposition \ref{PROP2}.
For simplicity of presentation, the energy estimates in the remaining part of this paper
are carried out in a brief way, that is, we test the relevant equations by some singularly weighted functions directly;
however, as already indicated in the proof of Proposition \ref{PROP2}, due to the assumptions (H1) and (H2),
the arguments can be justified rigorously
by adopting similar cutoff, approximations, and taking the limits as $R\rightarrow\infty$ and $\delta\rightarrow0$.
\end{remark}

\subsection{Singularly weighted  $L^\infty(H^1)$ estimates}
\begin{proposition}
  \label{PROP3}
It holds that
    \begin{eqnarray*}
    \sup_{0\leq t\leq T}\|\rho_0^{-\frac\gamma2}\nabla u\|_2^2+\int_0^T\|\rho_0^{\frac{1-\gamma}{2}}\partial_tu\|_2^2dt
    \leq C\|(\rho_0^{\frac{1-\gamma}{2}}u_0, \rho_0^{1-\frac\gamma2}\theta_0,\rho_0^{-\frac\gamma2}\nabla u_0)\|_2^2
  \end{eqnarray*}
for a positive constant $C$ depending only on $c_v, \mu, \lambda, \kappa, K_1,$ and $\Phi_T$.
\end{proposition}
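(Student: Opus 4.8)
The plan is to run a singularly weighted energy estimate, testing the momentum equation~(\ref{EQU}) with $\rho_0^{-\gamma}\partial_tu$ (equivalently with $\rho_0^{-\gamma}\dot u$, where $\dot u=\partial_tu+u\cdot\nabla u$), in the formal way of Remark~\ref{RKTESTING}, which under (H1) is rigorous by the cutoff--approximation procedure described there. The inertial term contributes $\int\rho\rho_0^{-\gamma}|\partial_tu|^2\,dx+\int\rho(u\cdot\nabla u)\cdot\rho_0^{-\gamma}\partial_tu\,dx$: the first piece is $\|\sqrt\rho\rho_0^{-\gamma/2}\partial_tu\|_2^2$, which by Proposition~\ref{PROP1} dominates $e^{-C_*\Phi_T}\|\rho_0^{(1-\gamma)/2}\partial_tu\|_2^2$ (the term to be bounded in $L^2(0,T)$), while the convective piece is absorbed into $\tfrac18\|\sqrt\rho\rho_0^{-\gamma/2}\partial_tu\|_2^2+C\phi(t)\|\rho_0^{-\gamma/2}\nabla u\|_2^2$ after noting $\sqrt\rho\,|u|\le e^{C_*\Phi_T/2}\|\sqrt{\rho_0}u\|_\infty\le C\sqrt{\phi(t)}$. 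Integrating by parts the viscous terms produces the principal part $\tfrac12\frac{d}{dt}\big(\mu\|\rho_0^{-\gamma/2}\nabla u\|_2^2+(\mu+\lambda)\|\rho_0^{-\gamma/2}\text{div}\,u\|_2^2\big)$, with $\mu+\lambda>\tfrac13\mu>0$ by $2\mu+3\lambda>0$, plus commutator terms carrying $\nabla\rho_0^{-\gamma}$; since $|\nabla\rho_0^{-\gamma}|\le\gamma K_1\rho_0^{1/2}\rho_0^{-\gamma}$ by (H1), these are bounded by $\tfrac18\|\rho_0^{(1-\gamma)/2}\partial_tu\|_2^2+C\|\rho_0^{-\gamma/2}\nabla u\|_2^2$, where the last term is integrable on $(0,T)$ by Proposition~\ref{PROP2} with $\alpha=\gamma$.

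The heart of the proof is the pressure term. Integrating by parts first in space and then in time, and using that $\rho_0$ is $t$-independent,
$$
\int\nabla p\cdot\rho_0^{-\gamma}\partial_tu\,dx=-\frac{d}{dt}\int p\rho_0^{-\gamma}\text{div}\,u\,dx+\int p_t\rho_0^{-\gamma}\text{div}\,u\,dx-\int p\,\partial_tu\cdot\nabla\rho_0^{-\gamma}\,dx .
$$
The total-derivative term is moved to the left; its value at any $t$ is controlled by $\tfrac14\mu\|\rho_0^{-\gamma/2}\nabla u\|_2^2+C\|\rho_0^{1-\gamma/2}\theta\|_2^2$ because $\rho_0^{-\gamma}p^2\le C(\rho_0^{1-\gamma/2}\theta)^2$ by Proposition~\ref{PROP1}, and both pieces are bounded at $t=0$ and at $t$ through the data and Proposition~\ref{PROP2}. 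The last term is estimated by (H1) as above. For $\int p_t\rho_0^{-\gamma}\text{div}\,u\,dx$ one inserts the pressure identity
$$
p_t+\text{div}\,(pu)+(\gamma-1)p\,\text{div}\,u=(\gamma-1)\big(\kappa\Delta\theta+\mathcal Q(\nabla u)\big),
$$
which follows from~(\ref{EQRHO}),~(\ref{EQTHETA}) and $R=(\gamma-1)c_v$. Then $-(\gamma-1)\int p(\text{div}\,u)^2\rho_0^{-\gamma}\,dx\le0$ is discarded, $(\gamma-1)\int\mathcal Q(\nabla u)\rho_0^{-\gamma}\text{div}\,u\,dx\le C\phi(t)\|\rho_0^{-\gamma/2}\nabla u\|_2^2$, and the two remaining contributions $-\int\text{div}\,(pu)\rho_0^{-\gamma}\text{div}\,u\,dx$ and $(\gamma-1)\kappa\int\Delta\theta\,\rho_0^{-\gamma}\text{div}\,u\,dx$ are integrated by parts once more; one replaces $\nabla\text{div}\,u$ by $\tfrac1{\mu+\lambda}(\rho\dot u+\nabla p-\mu\Delta u)$ using~(\ref{EQU}), which produces further nonpositive quantities --- notably $-\tfrac{\kappa R}{\mu+\lambda}\int\rho_0^{-\gamma}\rho|\nabla\theta|^2\,dx$ and additional multiples of $-\int\rho_0^{-\gamma}p(\text{div}\,u)^2\,dx$ --- while every term left over is dominated, again by (H1), Proposition~\ref{PROP1} and the Cauchy inequality, by $C\phi(t)\|\rho_0^{-\gamma/2}\nabla u\|_2^2+\tfrac18\|\rho_0^{(1-\gamma)/2}\partial_tu\|_2^2$ plus quantities of the form $\|\rho_0^{(1-\gamma)/2}\nabla\theta\|_2^2$ and $(1+\phi(t))\|\rho_0^{1-\gamma/2}\theta\|_2^2$, which lie in $L^1(0,T)$ by Proposition~\ref{PROP2}.

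Putting everything together gives a differential inequality
$$
\frac{d}{dt}\mathcal E(t)+c\,\|\rho_0^{(1-\gamma)/2}\partial_tu\|_2^2\le C\phi(t)\mathcal E(t)+Cg(t),
$$
with $\mathcal E(t)$ comparable to $\|\rho_0^{-\gamma/2}\nabla u\|_2^2$, $g\in L^1(0,T)$ bounded by $\|(\rho_0^{(1-\gamma)/2}u_0,\rho_0^{1-\gamma/2}\theta_0)\|_2^2$, and $\mathcal E(0)\le C\|(\rho_0^{(1-\gamma)/2}u_0,\rho_0^{1-\gamma/2}\theta_0,\rho_0^{-\gamma/2}\nabla u_0)\|_2^2$; since $\int_0^T\phi\,dt=\Phi_T<\infty$, the Gr\"onwall inequality yields the claimed estimate (if $\dot u$ is used as the test function one finally uses $\|\rho_0^{(1-\gamma)/2}\partial_tu\|_2\le\|\rho_0^{(1-\gamma)/2}\dot u\|_2+C\sqrt{\phi(t)}\|\rho_0^{-\gamma/2}\nabla u\|_2$). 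The main obstacle is the control of $\int p_t\rho_0^{-\gamma}\text{div}\,u\,dx$: under (H1) alone there is no usable pointwise bound on $\nabla\rho$, nor any weighted control of $\nabla^2u$, at this stage, so the integrations by parts in the conduction and convective pressure contributions must be arranged so that each dangerous term is either converted into a good-sign quantity or, via the Cauchy inequality and Proposition~\ref{PROP1}, paired against one of $\|\rho_0^{(1-\gamma)/2}\partial_tu\|_2^2$, $\|\rho_0^{-\gamma/2}\nabla u\|_2^2$, $\|\rho_0^{(1-\gamma)/2}\nabla\theta\|_2^2$, $\|\rho_0^{1-\gamma/2}\theta\|_2^2$, all already supplied by Propositions~\ref{PROP1}--\ref{PROP2}; making the singular weights match in this bookkeeping is the delicate point.
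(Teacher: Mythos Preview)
Your overall strategy---test \eqref{EQU} with $\rho_0^{-\gamma}\partial_tu$, extract $\tfrac{d}{dt}\|\rho_0^{-\gamma/2}\nabla u\|_2^2$ from the viscous terms, and move a total time derivative out of the pressure contribution---matches the paper up to and including the reduction to controlling $\int\rho_0^{-\gamma}\text{div}\,u\,\partial_tp\,dx$. The gap is in how you close this last integral.

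Your substitution $\nabla\text{div}\,u=\tfrac{1}{\mu+\lambda}(\rho\dot u+\nabla p-\mu\Delta u)$ does not eliminate the difficulty; it shifts it. The $\nabla p$ piece brings in $R\theta\nabla\rho$, and under (H1) alone there is \emph{no} singularly weighted bound on $\nabla\rho$: the solution class only gives $\nabla\rho\in L^\infty_tL^{2}\cap L^\infty_tL^q$, with no $\rho_0$ weight, and the pairings you list ($\|\rho_0^{(1-\gamma)/2}\nabla\theta\|_2$, $\|\rho_0^{1-\gamma/2}\theta\|_2$, $\|\rho_0^{-\gamma/2}\nabla u\|_2$) cannot absorb a factor of $\rho_0^{-\gamma}\theta\nabla\rho$. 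Integrating this term by parts once more produces either $\Delta\rho$ or, via the temperature equation, $\partial_t\theta$---neither of which is available at this stage. The $-\mu\Delta u$ piece has the same problem: writing $\Delta u=\nabla\text{div}\,u-\nabla\times\nabla\times u$ handles the curl part, but the $\nabla\text{div}\,u$ part cycles back and the net coefficient again forces you through $\nabla p$. So the assertion that ``every term left over is dominated \ldots'' is precisely the unproved step.

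The paper avoids this by introducing the effective viscous flux $G=(2\mu+\lambda)\text{div}\,u-p$. Writing $\text{div}\,u=\tfrac{1}{2\mu+\lambda}(G+p)$ splits off an exact derivative $-\tfrac{1}{2(2\mu+\lambda)}\tfrac{d}{dt}\|\rho_0^{-\gamma/2}p\|_2^2$; in the remaining $-\tfrac{1}{2\mu+\lambda}\int\rho_0^{-\gamma}G\,\partial_tp\,dx$ one inserts the pressure equation and integrates by parts so that only $\nabla G$ (not $\nabla\text{div}\,u$ or $\nabla p$ separately) appears. Because $\Delta G=\text{div}(\rho\dot u)$, a weighted elliptic test with $\rho_0^{-(\gamma+1)}G$ gives
\[
\|\rho_0^{-(\gamma+1)/2}\nabla G\|_2^2\le C\|\sqrt{\rho}\rho_0^{-\gamma/2}\partial_tu\|_2^2+C\phi(t)\big(\|\rho_0^{-\gamma/2}\nabla u\|_2^2+\|\rho_0^{1-\gamma/2}\theta\|_2^2\big),
\]
which is absorbable. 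This is the missing structural idea: work with $G$, not with $\text{div}\,u$ and the raw momentum equation, so that $\nabla\rho$ never appears by itself.
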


\begin{proof}
  Multiplying (\ref{EQU}) with $\rho_0^{-\gamma}\partial_tu$ and integrating over $\mathbb R^3$ yield
  \begin{eqnarray}
    -\int(\mu\Delta u+(\mu+\lambda)\nabla\text{div}\,u)\cdot\rho_0^{-\gamma}\partial_tudx+\|\sqrt\rho\rho_0^{-\frac\gamma2}\partial_tu\|_2^2
    \nonumber\\
    =-\int\rho(u\cdot\nabla)u\cdot\rho_0^{-\gamma}\partial_tudx-\int\nabla p\rho_0^{-\gamma}\partial_tudx. \label{P3.1}
  \end{eqnarray}
  It follows from integrating by parts, the Cauchy inequality, and Proposition \ref{PROP1} that
  \begin{eqnarray}
    -\int\Delta u\rho_0^{-\gamma}\partial_tudx&=&\frac12\frac{d}{dt}\|\rho_0^{-\frac\gamma2}\nabla u\|_2^2+\int\nabla u:\partial_tu\otimes\nabla\rho_0^{-\gamma}dx\nonumber\\
    &\geq&\frac12\frac{d}{dt}\|\rho_0^{-\frac\gamma2}\nabla u\|_2^2-\frac{1}{8\mu}\|\sqrt\rho\rho_0^{-\frac\gamma2}\partial_t
    u\|_2^2-C\|\rho_0^{-\frac\gamma2}\nabla u\|_2^2,\label{P3.2}
  \end{eqnarray}
  and, similarly,
  \begin{equation}
    -\int\nabla\text{div}u\rho_0^{-\gamma}\partial_tudx\geq\frac12\frac{d}{dt}\|\rho_0^{-\frac\gamma2}\text{div}\,u\|_2^2
    -\frac{\|\sqrt\rho\rho_0^{-\frac\gamma2}\partial_tu\|_2^2}{8(\mu+\lambda)}-C\|\rho_0^{-\frac\gamma2}\nabla u\|_2^2.
    \label{P3.3}
  \end{equation}
  The Cauchy inequality and Proposition \ref{PROP1} yield
  \begin{equation}
    -\int\rho(u\cdot\nabla)u\cdot\rho_0^{-\gamma}\partial_tudx \leq\frac18\|\sqrt\rho\rho_0^{-\frac\gamma2}\partial_tu\|_2^2
    +C\|\sqrt{\rho_0}u\|_\infty^2\|\rho_0^{-\frac\gamma2}\nabla u\|_2^2. \label{P3.4}
  \end{equation}
  Substituting (\ref{P3.2})--(\ref{P3.4}) into (\ref{P3.1}) leads to
  \begin{eqnarray}
    \frac{d}{dt}(\mu\|\rho_0^{-\frac\gamma2}\nabla u\|_2^2+(\mu+\lambda)\|\rho_0^{-\frac\gamma2}\text{div}u\|_2^2)
    +\frac54\|\sqrt\rho\rho_0^{-\frac\gamma2}\partial_tu\|_2^2\nonumber\\
    \leq C\phi(t)\|\rho_0^{-\frac\gamma2}\nabla u\|_2^2-2\int\rho_0^{-\gamma}\nabla p\cdot\partial_tudx. \label{P3.5}
  \end{eqnarray}

  Let $G$ be the effective viscous flux, i.e.,
  \begin{equation}
    \label{G}
    G:=(2\mu+\lambda)\text{div}u-p.
  \end{equation}
  By the Cauchy inequality and Proposition \ref{PROP1}, it follows
  \begin{eqnarray}
    -\int\rho_0^{-\gamma}\nabla p\cdot\partial_tudx&=&\int p(\nabla\rho_0^{-\gamma}\cdot\partial_tu+\rho_0^{-\gamma}\text{div}\partial_tu)dx\nonumber\\
    &=&\frac{d}{dt}\int\rho_0^{-\gamma}\text{div}updx+\int(p\nabla\rho_0^{-\gamma}\cdot\partial_tu-\rho_0^{-\gamma}
    \text{div}u\partial_tp)dx\nonumber\\
    &\leq&\frac{d}{dt}\int\rho_0^{-\gamma}\text{div}updx-\int\rho_0^{-\gamma}
    \text{div}u\partial_tp dx\nonumber\\
    &&+\eta\|\sqrt\rho\rho_0^{-\frac\gamma2}\partial_tu\|_2^2+C_\eta\|\rho_0^{1-\frac\gamma2}\theta\|_2^2\label{P3.6}
  \end{eqnarray}
  for any $\eta>0$.
  Note that
  \begin{eqnarray}
    -\int\rho_0^{-\gamma}
    \text{div}u\partial_tp dx=-\frac{1}{2(2\mu+\lambda)}\frac{d}{dt}\|\rho_0^{-\frac\gamma2}p\|_2^2-\frac{1}{2\mu+\lambda}
    \int\rho_0^{-\gamma}G\partial_tpdx. \label{P3.7}
  \end{eqnarray}
  It follows from (\ref{EQRHO}), (\ref{EQTHETA}), and the equation of state that
  $$
  -\partial_tp=\text{div}(up-\kappa(\gamma-1)\nabla\theta)+(\gamma-1)(\text{div}up-\mathcal Q(\nabla u)).
  $$
  Thanks to this and using Proposition \ref{PROP1}, one deduces
  \begin{eqnarray}
    -\int\rho_0^{-\gamma}G\partial_tpdx&=&\int\rho_0^{-\gamma}G\text{div}(up-\kappa(\gamma-1)\nabla\theta)dx\nonumber\\
    &&+(\gamma-1)\int\rho_0^{-\gamma}G(\text{div}up-\mathcal Q(\nabla u))dx\nonumber\\
    &=&\int[(\gamma-1)\kappa\nabla\theta-up]\cdot(\rho_0^{-\gamma}\nabla G+\nabla\rho_0^{-\gamma}G)dx\nonumber\\
    &&+(\gamma-1)\int\rho_0^{-\gamma}G(\text{div}up-\mathcal Q(\nabla u))dx\nonumber\\
    &\leq&C\int(|\nabla\theta|+\rho_0\theta|u|)[\rho_0^{-\gamma}|\nabla G|+\rho_0^{\frac12-\gamma}(|\nabla u|+\rho_0\theta)]dx\nonumber\\
    &&+C\int\rho_0^{-\gamma}(|\nabla u|+\rho_0\theta)^2|\nabla u|dx\nonumber\\
    &\leq&\eta\|\rho_0^{-\frac{\gamma+1}{2}}\nabla G\|_2^2+C_\eta\|\rho_0^{\frac{1-\gamma}{2}}\nabla\theta\|_2^2\nonumber\\
    &&+C\phi (t)(\|\rho_0^{1-\frac\gamma2}\theta\|_2^2+\|\rho_0^{-\frac\gamma2}\nabla u\|_2^2)\label{P3.8}
  \end{eqnarray}
  for any positive $\eta$, where $|G|\leq C(|\nabla u|+\rho_0\theta)$ has been used.

It follows from (\ref{P3.5})--(\ref{P3.8}) that
  \begin{eqnarray}
    &&\frac{d}{dt}\Big(\mu\|\rho_0^{-\frac\gamma2}\nabla u\|_2^2+(\mu+\lambda)\|\rho_0^{-\frac\gamma2}\text{div}u\|_2^2
    +\frac{1}{2\mu+\lambda} \|\rho_0^{-\frac\gamma2}p\|_2^2\Big)\nonumber\\
    &&-2\frac{d}{dt}\int\rho_0^{-\gamma}\text{div}u pdx+\frac32\|\sqrt\rho\rho_0^{-\frac\gamma2}\partial_tu\|_2^2\nonumber\\
    &\leq&  \eta\|\rho_0^{-\frac{\gamma+1}{2}}\nabla G\|_2^2+C_\eta\|\rho_0^{\frac{1-\gamma}{2}}\nabla\theta\|_2^2 +C\phi (t)\|(\rho_0^{1-\frac\gamma2}\theta,\rho_0^{-\frac\gamma2}\nabla u)\|_2^2\label{P3.9}
  \end{eqnarray}
  for any positive $\eta$.

It remains to estimate $\|\rho_0^{-\frac{\gamma+1}{2}}\nabla G\|_2^2$.
  Denote $\dot u=\partial_tu+(u\cdot\nabla)u$. Then (\ref{EQU}) yields that
  $$
  \Delta G=\text{div}\,(\rho\dot u).
  $$
  Multiplying the above equation by $\rho_0^{-(\gamma+1)}G$ and integrating over $\mathbb R^3$ yield
  \begin{equation}
    \label{ELL1.1}
    -\int\Delta G\rho_0^{-(\gamma+1)}Gdx=-\int\text{div}(\rho\dot u)\rho_0^{-(\gamma+1)}Gdx.
  \end{equation}
  Integrating by parts and using (H1) and the Cauchy inequality lead to
  \begin{equation}
    \label{ELL1.2}
    -\int\Delta G\rho_0^{-(\gamma+1)}Gdx\geq\frac34\|\rho_0^{-\frac{\gamma+1}{2}}\nabla G\|_2^2-C\|\rho_0^{-\frac\gamma2}
    G\|_2^2.
  \end{equation}
 Similarly, one has by Proposition \ref{PROP1} that
  \begin{eqnarray*}
    &&-\int\text{div}(\rho\dot u)\rho_0^{-(\gamma+1)}Gdx\nonumber\\
    &=&\int\rho\dot u\cdot(\rho_0^{-(\gamma+1)}\nabla G+\nabla\rho_0^{-(\gamma+1)}G)dx\nonumber\\
    &\leq&C(\|\sqrt\rho\rho_0^{-\frac\gamma2}\partial_t u\|_2+\|\sqrt{\rho_0}u\|_\infty\|\rho_0^{-\frac\gamma2}\nabla u\|_2)(\|\rho_0^{-\frac{\gamma+1}{2}}\nabla G\|_2+\|\rho_0^{-\frac\gamma2}G\|_2)\nonumber\\
    &\leq&    C(1+\|\sqrt{\rho_0}u\|_\infty^2)(\|\rho_0^{-\frac\gamma2}\nabla u\|_2^2+\|\rho_0^{1-\frac\gamma2}\theta\|_2^2)
    \nonumber\\
    &&+\frac14\|\rho_0^{-\frac{\gamma+1}{2}}\nabla G\|_2^2+C\|\sqrt\rho\rho_0^{-\frac\gamma2}\partial_tu\|_2^2.
  \end{eqnarray*}
  This, together with (\ref{ELL1.1}) and (\ref{ELL1.2}), leads to
  \begin{equation}
    \|\rho_0^{-\frac{\gamma+1}{2}}\nabla G\|_2^2\leq C\|\sqrt\rho\rho_0^{-\frac\gamma2}\partial_tu\|_2^2+
    C\phi(t)(\|\rho_0^{-\frac\gamma2}\nabla u\|_2^2+\|\rho_0^{1-\frac\gamma2}\theta\|_2^2).\label{ELL1.3}
  \end{equation}

Then, (\ref{P3.9}) and (\ref{ELL1.3}) yield
    \begin{eqnarray*}
    \frac{d}{dt}\left(\mu\|\rho_0^{-\frac\gamma2}\nabla u\|_2^2+(\mu+\lambda)\|\rho_0^{-\frac\gamma2}\text{div}u\|_2^2
    +\frac{1}{2\mu+\lambda} \|\rho_0^{-\frac\gamma2}p\|_2^2\right)
    +\|\sqrt\rho\rho_0^{-\frac\gamma2}\partial_tu\|_2^2\nonumber\\
    \leq \ \ 2 \frac{d}{dt}\int\rho_0^{-\gamma}\text{div}u pdx+C \|\rho_0^{\frac{1-\gamma}{2}}\nabla\theta\|_2^2 +C\phi (t)(\|\rho_0^{1-\frac\gamma2}\theta\|_2^2+\|\rho_0^{-\frac\gamma2}\nabla u\|_2^2).
  \end{eqnarray*}
Note that Proposition \ref{PROP2} together with Proposition \ref{PROP1} yield
$$
\sup_{0\leq t\leq T}\left\|\rho_0^{-\frac\gamma2}p\right\|_2^2\leq C\|(\rho_0^{\frac{1-\gamma}{2}}u_0, \rho_0^{1-\frac\gamma2}\theta_0)\|_2^2.
$$
Thanks to the above two and using the Gr\"onwall inequality, the conclusion follows by applying Propositions \ref{PROP1} and \ref{PROP2}  and the Cauchy inequality.
\end{proof}

\begin{proposition}
  \label{PROP4}
  It holds that
$$
    \sup_{0\leq t\leq T}\|\rho_0^{1-\frac\gamma2}\nabla\theta\|_2^2+\int_0^T\|\rho_0^{\frac{3-\gamma}{2}}\partial_t\theta\|_2^2\leq
    C \|(\rho_0^{\frac{1-\gamma}{2}}u_0, \rho_0^{1-\frac\gamma2}\theta_0,\rho_0^{-\frac\gamma2}\nabla u_0,\rho_0^{1-\frac\gamma2}\nabla\theta_0)\|_2^2
$$
for a positive constant $C$ depending only on $c_v, \mu, \lambda, \kappa, K_1,$ and $\Phi_T$.
\end{proposition}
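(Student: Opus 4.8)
The plan is to test the temperature equation (\ref{EQTHETA}) by the singularly weighted function $\rho_0^{2-\gamma}\partial_t\theta$ and integrate over $\mathbb R^3$, in the brief formal manner explained in Remark \ref{RKTESTING} (the argument being justified rigorously by the same cutoff-and-approximation procedure as in the proof of Proposition \ref{PROP2}). This is the exact analogue of the multiplier $\rho_0^{-\gamma}\partial_tu$ used for the momentum equation in Proposition \ref{PROP3}, and the exponent $2-\gamma$ is dictated by the two target norms: the term $c_v\rho\,\partial_t\theta\cdot\rho_0^{2-\gamma}\partial_t\theta=c_v\rho\rho_0^{2-\gamma}(\partial_t\theta)^2$ is, by Proposition \ref{PROP1}, bounded below by a positive multiple of $\|\rho_0^{\frac{3-\gamma}{2}}\partial_t\theta\|_2^2$, while integrating by parts in $-\kappa\Delta\theta\cdot\rho_0^{2-\gamma}\partial_t\theta$ produces the good term $\frac\kappa2\frac{d}{dt}\|\rho_0^{1-\frac\gamma2}\nabla\theta\|_2^2$ together with the commutator $\kappa\int\nabla\theta\cdot\partial_t\theta\,\nabla\rho_0^{2-\gamma}\,dx$. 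Because (H1) gives $|\nabla\rho_0^{2-\gamma}|\leq C\rho_0^{\frac52-\gamma}=C\rho_0^{\frac{3-\gamma}{2}}\rho_0^{1-\frac\gamma2}$, this commutator is absorbed by the Cauchy inequality into $\eta\|\rho_0^{\frac{3-\gamma}{2}}\partial_t\theta\|_2^2+C_\eta\|\rho_0^{1-\frac\gamma2}\nabla\theta\|_2^2$.

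It then remains to control the convection term and the forcing $\mathcal Q(\nabla u)-p\,\text{div}\,u$, each multiplied by $\rho_0^{2-\gamma}\partial_t\theta$. For the convection term, Proposition \ref{PROP1} gives $\rho|u|\leq C\|\sqrt{\rho_0}u\|_\infty\sqrt{\rho_0}$, so using once more the splitting $\rho_0^{\frac52-\gamma}=\rho_0^{1-\frac\gamma2}\rho_0^{\frac{3-\gamma}{2}}$ and the Cauchy inequality one bounds it by $\eta\|\rho_0^{\frac{3-\gamma}{2}}\partial_t\theta\|_2^2+C_\eta\phi(t)\|\rho_0^{1-\frac\gamma2}\nabla\theta\|_2^2$. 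For the forcing, one uses $\mathcal Q(\nabla u)\leq C|\nabla u|^2$ and $p\leq C\rho_0\theta$ (Proposition \ref{PROP1}), pulls out a factor $\|\nabla u\|_\infty$, and checks that the residual weight left after extracting $\rho_0^{-\frac\gamma2}\nabla u$ (resp.\ $\rho_0^{1-\frac\gamma2}\theta$) and $\rho_0^{\frac{3-\gamma}{2}}\partial_t\theta$ is exactly $\rho_0^{1/2}\leq\|\rho_0\|_\infty^{1/2}$, which is finite since $\rho_0\in W^{1,q}\hookrightarrow L^\infty$; the Cauchy inequality then yields the bound $\eta\|\rho_0^{\frac{3-\gamma}{2}}\partial_t\theta\|_2^2+C_\eta\phi(t)\big(\|\rho_0^{-\frac\gamma2}\nabla u\|_2^2+\|\rho_0^{1-\frac\gamma2}\theta\|_2^2\big)$. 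Both of these last two quantities are bounded, uniformly in $t\in[0,T]$, by Proposition \ref{PROP3} and by Proposition \ref{PROP2} with $\alpha=\gamma$, respectively.

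Collecting the above, choosing $\eta$ small to absorb all occurrences of $\|\rho_0^{\frac{3-\gamma}{2}}\partial_t\theta\|_2^2$, and applying Proposition \ref{PROP1} once more in the remaining good terms, one arrives at a differential inequality
$$
\frac{d}{dt}\|\rho_0^{1-\frac\gamma2}\nabla\theta\|_2^2+c\,\|\rho_0^{\frac{3-\gamma}{2}}\partial_t\theta\|_2^2\leq C\big(1+\phi(t)\big)\|\rho_0^{1-\frac\gamma2}\nabla\theta\|_2^2+C\phi(t)\,\mathcal D_0,
$$
where $\mathcal D_0=\|(\rho_0^{\frac{1-\gamma}{2}}u_0,\rho_0^{1-\frac\gamma2}\theta_0,\rho_0^{-\frac\gamma2}\nabla u_0)\|_2^2$. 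Since $\int_0^T\phi\,dt=\Phi_T$, the Gr\"onwall inequality on $[0,t]$ and then integration in $t$ give $\sup_{[0,T]}\|\rho_0^{1-\frac\gamma2}\nabla\theta\|_2^2+\int_0^T\|\rho_0^{\frac{3-\gamma}{2}}\partial_t\theta\|_2^2\,dt\leq C\big(\mathcal D_0+\|\rho_0^{1-\frac\gamma2}\nabla\theta_0\|_2^2\big)$, which is the asserted estimate. I expect the only real obstacle to be bookkeeping: one must choose the weight exponents so that every power of $\rho_0$ remaining after each splitting is nonnegative, hence harmless because $\rho_0$ is bounded, rather than any genuine analytic difficulty. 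In particular, unlike the pressure term in Proposition \ref{PROP3}, the forcing $\mathcal Q(\nabla u)-p\,\text{div}\,u$ contains no time or spatial derivative that would require a further integration by parts, so no auxiliary elliptic estimate (on the effective viscous flux, say) is needed for the temperature.
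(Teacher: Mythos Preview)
Your proposal is correct and follows essentially the same approach as the paper: test (\ref{EQTHETA}) by $\rho_0^{2-\gamma}\partial_t\theta$, integrate by parts in the Laplacian term to generate $\frac{d}{dt}\|\rho_0^{1-\frac\gamma2}\nabla\theta\|_2^2$, absorb the commutator via (H1) and Cauchy, bound the convection and forcing terms by pulling out $\|\sqrt{\rho_0}u\|_\infty$ or $\|\nabla u\|_\infty$, and close with Gr\"onwall and Propositions \ref{PROP1}--\ref{PROP3}. The only cosmetic difference is that the paper splits the weights as $\rho_0^{2-\gamma}=\rho_0^{\frac{1-\gamma}{2}}\rho_0^{\frac{3-\gamma}{2}}$ (so the residual norm is $\|\rho_0^{\frac{1-\gamma}{2}}\nabla u\|_2$ and $\|\rho_0^{\frac{3-\gamma}{2}}\theta\|_2$) rather than your splitting that leaves an extra bounded factor $\rho_0^{1/2}$; both are equivalent since $\rho_0\in L^\infty$.
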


\begin{proof}
Multiplying (\ref{EQTHETA}) with $\rho_0^{2-\gamma}\partial_t\theta$ and integrating the resultant over $\mathbb R^3$,
one can get from the Cauchy inequality and Proposition \ref{PROP1} that
\begin{eqnarray*}
  &&-\kappa\int\Delta\theta\rho_0^{2-\gamma}\partial_t\theta dx+c_v\|\sqrt\rho\rho_0^{1-\frac\gamma2}\partial_t\theta\|_2^2
  \nonumber\\
  &=&\int(\mathcal Q(\nabla u)-p\text{div}u)\rho_0^{2-\gamma}\partial_t\theta dx-c_v\int\rho u\cdot\nabla\theta
  \rho_0^{2-\gamma}\partial_t\theta dx\nonumber\\
  &\leq&\frac{c_v}{4}\|\sqrt\rho\rho_0^{1-\frac\gamma2}\partial_t\theta\|_2^2+C\phi(t)\|(\rho_0^{\frac{1-\gamma}{2}}\nabla u,\rho_0^{\frac{3-\gamma}{2}}\theta,\rho_0^{1-\frac\gamma2}\nabla\theta)\|_2^2.
\end{eqnarray*}
Similar to (\ref{P3.2}), one has
\begin{eqnarray*}
  -\kappa\int\Delta\theta\rho_0^{2-\gamma}\partial_t\theta dx
   \geq \frac\kappa2\frac{d}{dt}\|\rho_0^{1-\frac\gamma2}
  \nabla\theta\|_2^2-\frac{c_v}{4}\|\sqrt\rho\rho_0^{1-\frac\gamma2}\partial_t\theta\|_2^2
  -C \|\rho_0^{1-\frac\gamma2}\nabla\theta\|_2^2.
\end{eqnarray*}
Therefore,
  \begin{eqnarray*}
    \kappa\frac{d}{dt}\|\rho_0^{1-\frac\gamma2}\nabla\theta\|_2^2+c_v\|\sqrt\rho\rho_0^{1-\frac\gamma2}\partial_t\theta\|_2^2\leq
    C\phi(t)\|(\rho_0^{\frac{1-\gamma}{2}}\nabla u,\rho_0^{\frac{3-\gamma}{2}}\theta,\rho_0^{1-\frac\gamma2}\nabla\theta)\|_2^2,
  \end{eqnarray*}
  from which, by the Gr\"onwall inequality and Propositions \ref{PROP1}, \ref{PROP2}, and \ref{PROP3}, the conclusion follows.
\end{proof}

\subsection{Singularly weighted $L^\infty(L^2)$ estimate for $\dot u$}
\begin{proposition}
\label{PROP5}
Recall $\dot u=\partial_tu+(u\cdot\nabla) u$. Then, it holds that
\begin{eqnarray*}
  &&\sup_{0\leq t\leq T}\|
  \rho_0^{1-\frac{\gamma}{2}}\dot u\|_2^2+\int_0^T\|\rho_0^{\frac{1-\gamma}{2}}\nabla\dot u\|_2^2dt \\
  &\leq& C \|(\rho_0^{\frac{1-\gamma}{2}}u_0, \rho_0^{1-\frac\gamma2}\theta_0,\rho_0^{-\frac\gamma2}\nabla u_0,\rho_0^{1-\frac\gamma2}\nabla\theta_0,\rho_0^{-\frac\gamma2}\mathscr S_0)\|_2^2
\end{eqnarray*}
for a positive constant $C$ depending only on $c_v, \mu, \lambda, \kappa, K_1,$ and $\Phi_T$, where
$$
\mathscr S_0:=\mu\Delta u_0+(\mu+\lambda)\nabla\text{div}u_0-R\nabla(\rho_0\theta_0).
$$
\end{proposition}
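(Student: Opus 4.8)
The strategy is to run Hoff's material--derivative argument — the one already used for Proposition \ref{PROP3} — at the level of $\dot u$, carrying the singular weight throughout. Write the momentum equation as $\rho\dot u=\mu\Delta u+(\mu+\lambda)\nabla\text{div}\,u-\nabla p$ and apply $\frac{D}{Dt}:=\partial_t+u\cdot\nabla$ to it. Using the continuity equation in the form $\dot\rho=-\rho\,\text{div}\,u$, the commutators $[\frac{D}{Dt},\Delta]u$ and $[\frac{D}{Dt},\nabla\text{div}]u$ (both sums of products of a first--order and a second--order derivative of $u$), and the identity $\frac{D}{Dt}\nabla p=\nabla\dot p-(\nabla u)^{T}\nabla p$, one obtains an evolution equation of the form
\[
\rho\,\ddot u-\mu\Delta\dot u-(\mu+\lambda)\nabla\text{div}\,\dot u=\rho(\text{div}\,u)\dot u-\nabla\dot p+(\nabla u)^{T}\nabla p+\mathcal R,
\]
where $\mathcal R$ collects the $\nabla u\cdot\nabla^{2}u$ commutator terms. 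Since $p=R\rho\theta$ and $\dot\rho=-\rho\,\text{div}\,u$, one has $\dot p=-p\,\text{div}\,u+R\rho\dot\theta$; I keep $\rho\dot\theta$ as it stands (it is controlled, through the temperature equation, by the weighted norms of Proposition \ref{PROP4}) rather than substituting $\kappa\Delta\theta$ for it, since that would produce uncontrollable second derivatives of $\theta$.

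The main estimate comes from testing this equation against $\rho_0^{1-\gamma}\dot u$; the exponent $1-\gamma$ is forced by Proposition \ref{PROP1}. On the left, the acceleration term yields $\tfrac12\frac{d}{dt}\|\sqrt\rho\,\rho_0^{(1-\gamma)/2}\dot u\|_2^2$ up to a term bounded by $C\|\sqrt{\rho_0}u\|_\infty\|\sqrt\rho\,\rho_0^{(1-\gamma)/2}\dot u\|_2^2$, using $|\nabla\rho_0^{1-\gamma}|\le C\rho_0^{3/2-\gamma}$ from (H1); and by Proposition \ref{PROP1} the two--sided comparison $\|\rho_0^{1-\gamma/2}\dot u\|_2^2\sim\|\sqrt\rho\,\rho_0^{(1-\gamma)/2}\dot u\|_2^2$ holds, which is exactly the quantity appearing in the statement. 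The viscosity terms produce the dissipation $\mu\|\rho_0^{(1-\gamma)/2}\nabla\dot u\|_2^2+(\mu+\lambda)\|\rho_0^{(1-\gamma)/2}\text{div}\,\dot u\|_2^2$ plus cross terms in which the gradient falls on the weight; since $\rho_0^{3/2-\gamma}=\rho_0^{(1-\gamma)/2}\rho_0^{(2-\gamma)/2}$, these are absorbed into a quarter of the dissipation plus $C\|\rho_0^{(2-\gamma)/2}\dot u\|_2^2$, which is controlled by the energy via Proposition \ref{PROP1}.

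The right-hand side is handled term by term. The term $\rho(\text{div}\,u)\dot u\cdot\rho_0^{1-\gamma}\dot u$ is $\le\|\nabla u\|_\infty\|\sqrt\rho\,\rho_0^{(1-\gamma)/2}\dot u\|_2^2\le C\phi(t)\|\sqrt\rho\,\rho_0^{(1-\gamma)/2}\dot u\|_2^2$, which Gr\"onwall eats. For $-\nabla\dot p$, $(\nabla u)^{T}\nabla p$ and the commutators in $\mathcal R$, the principle is uniform: integrate by parts so that the surviving derivative lands either on $\dot u$ — giving a factor $\rho_0^{(1-\gamma)/2}\nabla\dot u$ absorbed into the dissipation by Cauchy's inequality — or on the weight, where (H1) supplies the needed power of $\rho_0$; the remaining factors are bounded by $\|\nabla u\|_\infty$ (part of $\phi$) together with $\|\rho_0^{-\gamma/2}\nabla u\|_2$, $\|\rho_0^{1-\gamma/2}\theta\|_2$, $\|\rho_0^{1-\gamma/2}\nabla\theta\|_2$ (in $L^\infty_t$) and $\|\rho_0^{(3-\gamma)/2}\partial_t\theta\|_2$ (in $L^2_t$) from Propositions \ref{PROP2}--\ref{PROP4}, the effective--viscous--flux bound $\|\rho_0^{-(\gamma+1)/2}\nabla G\|_2$ from (\ref{ELL1.3}) (with $|G|\le C(|\nabla u|+\rho_0\theta)$), and the uniform bounds on $\rho_0$ and $\rho/\rho_0$ from Proposition \ref{PROP1}. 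This yields
\[
\frac{d}{dt}\|\sqrt\rho\,\rho_0^{(1-\gamma)/2}\dot u\|_2^2+\|\rho_0^{(1-\gamma)/2}\nabla\dot u\|_2^2\le C\phi(t)\|\sqrt\rho\,\rho_0^{(1-\gamma)/2}\dot u\|_2^2+\mathcal G(t),
\]
with $\|\mathcal G\|_{L^1(0,T)}$ bounded in terms of $\Phi_T$ and the right-hand sides of Propositions \ref{PROP2}--\ref{PROP4}. Evaluating the momentum equation at $t=0$ gives $\rho_0\dot u|_{t=0}=\mu\Delta u_0+(\mu+\lambda)\nabla\text{div}\,u_0-R\nabla(\rho_0\theta_0)=\mathscr S_0$, hence $\|\sqrt\rho\,\rho_0^{(1-\gamma)/2}\dot u\|_2^2|_{t=0}=\|\rho_0^{-\gamma/2}\mathscr S_0\|_2^2$, and Gr\"onwall's inequality (using $\Phi_T<\infty$) together with Proposition \ref{PROP1} gives the conclusion. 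The genuine difficulty is precisely the treatment of $\mathcal R$ and of $(\nabla u)^{T}\nabla p$ with the singular weight: one must both choose the integrations by parts so that every leftover derivative lands on $\dot u$ (and so is absorbed by the dissipation) and verify, exponent by exponent, that the weight $\rho_0^{1-\gamma}$ combined with (H1) distributes correctly among the (typically three) factors so that each leftover piece is part of the dissipation, a constant times the energy, or an $L^1_t$ function furnished by Propositions \ref{PROP1}--\ref{PROP4} — and the choice $1-\gamma$ of the test exponent is exactly what closes this bookkeeping. As in Remark \ref{RKTESTING}, all of this is carried out first formally and then justified by the cutoff--and--limit procedure described there.
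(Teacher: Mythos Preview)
Your strategy is exactly the paper's: derive an evolution equation for $\dot u$ by differentiating the momentum equation along the flow, test with $\rho_0^{1-\gamma}\dot u$, and close via Gr\"onwall using Propositions \ref{PROP1}--\ref{PROP4}. The one substantive difference is in how the commutator terms are organized. The paper applies the \emph{conservative} operator $\partial_t(\cdot)+\text{div}(u\,\cdot)$ (not $D/Dt=\partial_t+u\cdot\nabla$) to the momentum equation; with this choice, an algebraic identity (Lemma \ref{LEMMA} in the Appendix) rewrites the commutators $\text{div}(u\Delta u_i)-\Delta(u\cdot\nabla u_i)$ and $\text{div}(u\partial_i\text{div}\,u)-\partial_i\text{div}((u\cdot\nabla)u)$ as pure divergences of expressions \emph{quadratic in $\nabla u$}. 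After one integration by parts the right-hand side is then bounded by
\[
C\int\big(|\nabla u|^2+|\nabla u|\rho_0\theta+\rho|\dot\theta|\big)\big(\rho_0^{1-\gamma}|\nabla\dot u|+|\nabla\rho_0^{1-\gamma}||\dot u|\big)\,dx,
\]
and the rest is bookkeeping --- no $\nabla G$ estimate is needed here.

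Your route via $D/Dt$ is ultimately equivalent but less transparent on precisely the point you flag as the ``genuine difficulty''. Your $\mathcal R$ consists of $\nabla u\cdot\nabla^2 u$ terms, and for a single such term (say $\partial_j u_k\,\partial_j\partial_k u_i$) the prescription ``integrate by parts so the surviving derivative lands on $\dot u$'' does not terminate: one integration by parts produces another $\nabla u\cdot\nabla^2 u$ term with a different index pattern. What actually makes your formulation work is that the non-divergence part of $\mathcal R$, together with your extra pieces $\rho\,\text{div}\,u\,\dot u$, $(\nabla u)^T\nabla p$, and the $\nabla(p\,\text{div}\,u)$ coming from $-\nabla\dot p$, combines --- via the momentum equation $\mu\Delta u+(\mu+\lambda)\nabla\text{div}\,u=\rho\dot u+\nabla p$ --- to give exactly the paper's divergence-form right-hand side. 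This cancellation (equivalently, Lemma \ref{LEMMA}) is the algebraic step your sketch does not supply. The weighted bound $\|\rho_0^{-(\gamma+1)/2}\nabla G\|_2$ you invoke controls only $\nabla\text{div}\,u$, not the full Hessian $\nabla^2 u$, so it cannot by itself close your estimate of $\mathcal R$.
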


\begin{proof}
Taking the operator $\partial_t(~\cdot~)+\text{div}\,(u~\cdot~)$ to (\ref{EQU}), noticing that
\begin{eqnarray}
  \partial_t(\rho\dot u_i)+\text{div}\,(u\rho\dot u_i)&=&\rho (\partial_t\dot u_i+u\cdot\nabla\dot u_i), \nonumber\\
  \partial_t\partial_i\text{div}\,u+\text{div}\,(u\partial_i\text{div}\,u)&=&\partial_i\text{div}\,\dot u+\text{div}\,(u\partial_i\text{div}\,u)
  -\partial_i\text{div}\,((u\cdot\nabla)u),\label{P5.1}\\
  \partial_t\Delta u_i+\text{div}\,(u\Delta u_i)&=&\Delta\dot u_i+\text{div}\,(u\Delta u_i)-\Delta(u\cdot\nabla u_i),
  \label{P5.2}\\
  \partial_t\partial_ip+\text{div}\,(u\partial_ip)&=&\partial_i(p_t+\text{div}\,(up))-\text{div}\,(\partial_iup)\nonumber\\
  &=&R\partial_i(\rho\dot\theta)-\text{div}\,(\partial_iup),\nonumber
\end{eqnarray}
and applying Lemma \ref{LEMMA} in the Appendix to the terms $\text{div}\,(u\partial_i\text{div}\,u)
  -\partial_i\text{div}\,((u\cdot\nabla)u)$ and $\text{div}\,(u\Delta u_i)-\Delta(u\cdot\nabla u_i)$ in (\ref{P5.1}) and (\ref{P5.2}), respectively, one obtains
\begin{eqnarray*}
  &&\rho (\partial_t\dot u+(u\cdot\nabla)\dot u)-\mu\Delta\dot u-(\mu+\lambda)\nabla\text{div}\dot u\\
  &=&\text{div}\,((\nabla u)^Tp)-R\nabla(\rho\dot\theta)+\mu\text{div}\,(\nabla u(\text{div}\,uI-\nabla u-(\nabla u)^T))\\
  &&+(\mu+\lambda)\text{div}\,[((\text{div}\,u)^2-\nabla u:(\nabla u)^T)I-\text{div}\,u(\nabla u)^T].
\end{eqnarray*}
Multiplying the above with $\rho_0^{1-\gamma}\dot u$ and integrating by parts, one gets from the Cauchy inequality, (H1), and Proposition \ref{PROP1} that
\begin{eqnarray}
  &&\int\rho(\partial_t\dot u+(u\cdot\nabla)\dot u)\cdot\rho_0^{1-\gamma}\dot udx-\int[\mu\Delta\dot u+(\mu+\lambda)\nabla\text{div}\dot u]\cdot\rho_0^{1-\gamma}\dot udx \nonumber\\
  &=&\int\text{div}\left[(\nabla u)^Tp-R\rho\dot\theta I+\mu\Big(\nabla u\big(\text{div}\,uI-\nabla u-(\nabla u)^T\big)\Big)
  \right]\cdot\rho_0^{1-\gamma}\dot udx \nonumber\\
  &&+(\mu+\lambda)\int\text{div}\Big(\big((\text{div}\,u)^2-\nabla u:(\nabla u)^T\big)I-\text{div}\,u(\nabla u)^T\Big)\cdot\rho_0^{1-\gamma}\dot udx \nonumber\\
  &\leq&C\int(|\nabla u|^2+|\nabla u|\rho_0\theta+\rho|\dot\theta|)(\rho_0^{1-\gamma}|\nabla\dot u|+|\nabla\rho_0^{1-\gamma}|
  |\dot u|)dx\nonumber\\
  &\leq&\eta\|\rho_0^{\frac{1-\gamma}{2}}\nabla\dot u\|_2^2+C\|\rho_0^{\frac{3-\gamma}{2}}\dot\theta\|_2^2+C\phi(t)\|(\rho_0^{\frac{1-\gamma}{2}}\nabla u, \rho_0^{\frac{3-\gamma}{2}}\theta,\rho_0^{1-\frac\gamma2}\dot u)\|_2^2.\label{P5.3}
\end{eqnarray}
Using (\ref{EQRHO}) and (H1), one deduces
\begin{eqnarray}
  &&\int\rho(\partial_t\dot u+(u\cdot\nabla)\dot u)\cdot\rho_0^{1-\gamma}\dot udx\nonumber\\
   &=&\frac12\frac{d}{dt}\int\rho\rho_0^{1-\gamma}|\dot u|^2dx-\frac12\int\rho u\cdot\nabla\rho_0^{1-\gamma}|\dot u|^2dx\nonumber\\
   &\geq& \frac12\frac{d}{dt}\|\sqrt\rho
  \rho_0^{\frac{1-\gamma}{2}}\dot u\|_2^2-C\|\sqrt{\rho_0}u\|_\infty\|\sqrt\rho\rho_0^{\frac{1-\gamma}{2}}\dot u\|_2^2.
  \label{P2.2}
\end{eqnarray}
Integrating by parts, it follows from (H1) and the Cauchy inequality that
  \begin{eqnarray}
    -\int \Delta \dot u\cdot\rho_0^{-\gamma}\dot udx&=&\|\rho_0^{-\frac\gamma2}\nabla \dot u\|_2^2+\int \nabla\dot u:
    \nabla\rho_0^{-\gamma}\otimes \dot udx\nonumber\\
    &\geq&\frac34\|\rho_0^{-\frac\gamma2}\nabla \dot u\|_2^2-C\|\rho_0^{\frac{1-\gamma}{2}}\dot u\|_2^2
    \label{P2.3}
  \end{eqnarray}
and, similarly,
$$
  -\int\nabla\text{div}\dot u\cdot\rho_0^{1-\gamma}\dot udx \geq \frac34\|\rho_0^{\frac{1-\gamma}{2}}\text{div}\dot u\|_2^2-
  C\|\rho_0^{1-\frac\gamma2}\dot u\|_2^2.
$$
Substituting the above three inequalities into (\ref{P5.3}) and using (H1) yield
\begin{eqnarray*}
  &&\frac{d}{dt}\|\sqrt\rho
  \rho_0^{\frac{1-\gamma}{2}}\dot u\|_2^2+\mu\|\rho_0^{\frac{1-\gamma}{2}}\nabla\dot u\|_2^2 \\
  &\leq& C\|\rho_0^{\frac{3-\gamma}{2}}\partial_t\theta\|_2^2+C\phi(t)\|(\rho_0^{\frac{1-\gamma}{2}}\nabla u, \rho_0^{1-\frac\gamma2}
  \nabla\theta, \rho_0^{\frac{3-\gamma}{2}}\theta,\rho_0^{1-\frac\gamma2}\dot u)\|_2^2,
\end{eqnarray*}
from which, by the Gr\"onwall inequality and applying Propositions \ref{PROP1}--\ref{PROP4}, the conclusion follows.
\end{proof}

\subsection{A singularly weighted elliptic estimate}
\begin{proposition}
  \label{PROPELLIPTIC2}
  It holds that
    \begin{equation*}
    \|\nabla(\rho_0^{-\frac\gamma2}u)\|_6+
    \| \rho_0^{-\frac\gamma2}\nabla u \|_6\leq C \|(\rho_0^{\frac{1-\gamma}{2}}u_0, \rho_0^{1-\frac\gamma2}\theta_0,\rho_0^{-\frac\gamma2}\nabla u_0,\rho_0^{1-\frac\gamma2}\nabla\theta_0,\rho_0^{-\frac\gamma2}\mathscr S_0)\|_2^2
  \end{equation*}
for a positive constant $C$ depending only on $c_v, \mu, \lambda, \kappa, K_1,$ and $\Phi_T$, where $\mathscr S_0$ is defined in Proposition \ref{PROP5}.
\end{proposition}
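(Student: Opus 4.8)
The plan is to treat the momentum equation (\ref{EQU}) as a (Lam\'e-type) elliptic system for $u$ with forcing $\rho\dot u+\nabla p$, where $\dot u=u_t+(u\cdot\nabla)u$, and to promote the singularly weighted $L^2$ bounds of Propositions \ref{PROP1}--\ref{PROP5} to a weighted $L^6$ bound for $\nabla u$ via the Sobolev embedding $\|f\|_6\le C\|\nabla f\|_2$ on $\mathbb R^3$. First I would reduce the claim to a single weighted second-order estimate: since $|\nabla\rho_0^{-\gamma/2}|\le C\rho_0^{(1-\gamma)/2}$ by (H1),
$$
\|\rho_0^{-\frac\gamma2}\nabla u\|_6\le C\|\rho_0^{-\frac\gamma2}\nabla^2u\|_2+C\|\rho_0^{\frac{1-\gamma}2}\nabla u\|_2,\qquad
\|\nabla(\rho_0^{-\frac\gamma2}u)\|_6\le C\|\rho_0^{-\frac\gamma2}\nabla u\|_6+C\|\rho_0^{\frac{1-\gamma}2}u\|_6 ,
$$
and the error terms are harmless: $\|\rho_0^{(1-\gamma)/2}\nabla u\|_2\le\|\rho_0\|_\infty^{1/2}\|\rho_0^{-\gamma/2}\nabla u\|_2$ is controlled by Proposition \ref{PROP3}, and $\|\rho_0^{(1-\gamma)/2}u\|_6\le C\|\nabla(\rho_0^{(1-\gamma)/2}u)\|_2\le C(\|\rho_0^{(1-\gamma)/2}\nabla u\|_2+\|\rho_0^{1-\gamma/2}u\|_2)$ by Propositions \ref{PROP2}--\ref{PROP3}. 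Hence it suffices to bound $\|\rho_0^{-\gamma/2}\nabla^2u\|_2$ by the right-hand side of the proposition; note that in this reduction only one derivative ever falls on the weight, which is why (H2) is not needed and $C$ depends only on $K_1$ (besides $c_v,\mu,\lambda,\kappa,\Phi_T$).

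For the weighted second-order estimate I would use $\Delta u=\nabla\,\mathrm{div}\,u-\nabla\times\omega$, $\omega=\nabla\times u$, together with the effective viscous flux $G=(2\mu+\lambda)\mathrm{div}\,u-p$. Integrating by parts shows that, modulo weight-commutators bounded by $\|\rho_0^{(1-\gamma)/2}\nabla u\|_2$, one has $\|\rho_0^{-\gamma/2}\nabla^2u\|_2\le C(\|\rho_0^{-\gamma/2}\nabla\,\mathrm{div}\,u\|_2+\|\rho_0^{-\gamma/2}\nabla\omega\|_2)$. From (\ref{EQU}) one derives the Poisson equations $\Delta G=\mathrm{div}(\rho\dot u)$ and $\mu\Delta\omega=\nabla\times(\rho\dot u)$; testing them against $\rho_0^{-\gamma}G$ and $\rho_0^{-\gamma}\omega$ and integrating by parts once (so the commutators carry a single factor $\nabla\rho_0^{-\gamma}$, estimated by (H1) and Young's inequality against lower-order terms such as $\|\rho_0^{(1-\gamma)/2}G\|_2\le C(\|\rho_0^{(1-\gamma)/2}\nabla u\|_2+\|\rho_0\|_\infty\|\rho_0^{(1-\gamma)/2}\theta\|_2)$) gives
$$
\|\rho_0^{-\frac\gamma2}\nabla G\|_2^2+\|\rho_0^{-\frac\gamma2}\nabla\omega\|_2^2\le C\|\rho_0^{-\frac\gamma2}\rho\dot u\|_2^2+(\text{lower order})\le C\|\rho_0^{1-\frac\gamma2}\dot u\|_2^2+(\text{lower order}),
$$
the last step by Proposition \ref{PROP1} ($\rho\simeq\rho_0$); the right-hand side is then bounded by the data via Propositions \ref{PROP1}--\ref{PROP5}. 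This disposes of the $\nabla\omega$ half; since $\nabla\,\mathrm{div}\,u=(2\mu+\lambda)^{-1}(\nabla G+\nabla p)$, it only remains to control $\|\rho_0^{-\gamma/2}\nabla p\|_2$.

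The hard part is exactly this pressure gradient. Writing $\nabla p=R\rho\nabla\theta+R\theta\nabla\rho$, the first piece is $\le CR\|\rho_0^{1-\gamma/2}\nabla\theta\|_2$, hence controlled by Propositions \ref{PROP1} and \ref{PROP4}; the genuinely new quantity is $R\theta\nabla\rho$, which requires a weighted bound on $\nabla\rho$ not contained in Propositions \ref{PROP1}--\ref{PROP5}. I expect to obtain it by propagating the slow-decay property (H1) to the solution itself, i.e.\ by showing $|\nabla\rho(x,t)|\le C\rho^{3/2}(x,t)$ from the transport equation for $\nabla\log\rho$ (equivalently, from a bound on $\nabla(\rho^{-1/2})$ along particle paths); its source term is $\nabla\,\mathrm{div}\,u$, now available through the weighted second-order bounds established above. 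Once this holds, $\theta\nabla\rho=(\nabla\rho/\rho)\rho\theta$ with $|\nabla\rho/\rho|\le C\|\rho_0\|_\infty^{1/2}$, so $\|\rho_0^{-\gamma/2}\theta\nabla\rho\|_2\le C\|\rho_0^{1-\gamma/2}\theta\|_2$, controlled by Proposition \ref{PROP2} (with $\alpha=\gamma$). An alternative route that avoids $\nabla p$ altogether is to reconstruct $\nabla u$ from $\mathrm{div}\,u=(G+p)/(2\mu+\lambda)$ and $\omega$ via the Helmholtz/Biot--Savart representation, reducing the $L^6$ bound to $\|\rho_0^{-\gamma/2}G\|_6+\|\rho_0^{-\gamma/2}p\|_6+\|\rho_0^{-\gamma/2}\omega\|_6$ — each handled by weighted Sobolev together with Propositions \ref{PROP2}, \ref{PROP4} and the $\nabla G,\nabla\omega$ bounds above — at the price of a commutator between the weight $\rho_0^{-\gamma/2}$ and the Riesz transforms; controlling that commutator, using that $|\nabla\rho_0^{\gamma/2}|\le C\rho_0^{(\gamma+1)/2}$ is bounded and $\rho_0$ is slowly varying, is the delicate point where the elaborately chosen singular weight enters. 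Either way, once $\|\rho_0^{-\gamma/2}\nabla^2u\|_2$ is bounded by the data, the reduction of the first paragraph yields both asserted inequalities.
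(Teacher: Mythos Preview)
Your reduction to a weighted second-order bound is reasonable, but the argument has a genuine gap at the pressure gradient $\|\rho_0^{-\gamma/2}\nabla p\|_2$, and neither proposed fix closes it. For route (a), propagating the pointwise bound $|\nabla\rho|\le C\rho^{3/2}$ along particle paths amounts to bounding $\nabla\rho^{-1/2}$ via its transport equation, whose source is $\tfrac12\rho^{-1/2}\nabla\text{div}\,u$; this requires $\rho^{-1/2}\nabla\text{div}\,u\in L^1(0,T;L^\infty)$, which is far beyond the $L^2$-in-space weighted bounds you have established and is not supplied by the strong-solution regularity $u\in L^2(0,T;D^{2,q})$, $q\le 6$, either (let alone with the growing factor $\rho^{-1/2}$). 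For route (b), the commutator $[\rho_0^{-\gamma/2},R_iR_j]$ is not covered by Coifman--Rochberg--Weiss because $\rho_0^{-\gamma/2}$ grows polynomially and is not in BMO; the Lipschitz bound you note is for $\rho_0^{\gamma/2}$, not its reciprocal, and does not transfer.

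The paper avoids $\nabla p$ altogether by commuting the weight with the Lam\'e operator at the PDE level rather than with singular integrals. Using the identities
\[
\rho_0^{-\frac\gamma2}\Delta u=\Delta(\rho_0^{-\frac\gamma2}u)-\text{div}(u\otimes\nabla\rho_0^{-\frac\gamma2})-\nabla u\cdot\nabla\rho_0^{-\frac\gamma2},\qquad
\rho_0^{-\frac\gamma2}\nabla p=\nabla(\rho_0^{-\frac\gamma2}p)-p\,\nabla\rho_0^{-\frac\gamma2},
\]
(and the analogous one for $\nabla\text{div}\,u$), the momentum equation becomes a Lam\'e system for $v=\rho_0^{-\gamma/2}u$ of the form $\mu\Delta v+(\mu+\lambda)\nabla\text{div}\,v=f+\text{div}\,g$, where $f$ collects $\rho_0^{-\gamma/2}\rho\dot u$ and terms of type $(\nabla u,\,p)\nabla\rho_0^{-\gamma/2}$, while $g$ collects $\rho_0^{-\gamma/2}pI$ and $u\otimes\nabla\rho_0^{-\gamma/2}$. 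The point is that the second derivatives of the weight cancel in these identities, so only (H1) is used. The standard elliptic estimate $\|\nabla v\|_6\le C(\|f\|_2+\|g\|_6)$ then reduces the pressure contribution to $\|\rho_0^{-\gamma/2}p\|_6\le C\|\rho_0^{1-\gamma/2}\theta\|_6\le C(\|\rho_0^{1-\gamma/2}\nabla\theta\|_2+\|\rho_0^{(3-\gamma)/2}\theta\|_2)$, which is controlled by Propositions \ref{PROP2} and \ref{PROP4}; no bound on $\nabla\rho$ and no Riesz-transform commutator is needed.
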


\begin{proof}
Note that
  \begin{eqnarray*}
    \rho_0^{-\frac\gamma2}\Delta u&=&\Delta(\rho_0^{-\frac\gamma2}u)-\text{div}(u\otimes\nabla\rho_0^{-\frac\gamma2})-\nabla u \cdot\nabla\rho_0^{-\frac\gamma2},\\
    \rho_0^{-\frac\gamma2}\nabla\text{div}u&=&\nabla\text{div}(\rho_0^{-\frac\gamma2}u)-\nabla(u\cdot\nabla\rho_0^{-\frac\gamma2})
    -\text{div}u\nabla\rho_0^{-\frac\gamma2},\\
    \rho_0^{-\frac\gamma2}\nabla p&=&\nabla(\rho_0^{-\frac\gamma2}p)-\nabla\rho_0^{-\frac\gamma2}p.
  \end{eqnarray*}
  Therefore, it follows from (\ref{EQU}) that
  \begin{eqnarray*}
    &&\mu\Delta(\rho_0^{-\frac\gamma2}u)+(\mu+\lambda)\nabla\text{div}(\rho_0^{-\frac\gamma2}u)\\
    &=&\rho_0^{-\frac\gamma2}\rho\dot u+\text{div}\left(\mu u\otimes\nabla\rho_0^{-\frac\gamma2}+(\mu+\lambda)u\cdot\nabla\rho_0^{-\frac\gamma2}I+
    \rho_0^{-\frac\gamma2}pI\right) \\
    &&+(\mu\nabla u+(\mu+\lambda)\text{div}uI-pI)\nabla\rho_0^{-\frac\gamma2}.
  \end{eqnarray*}
It follows from the elliptic estimates, (H1), and Proposition \ref{PROP1} that
  \begin{eqnarray}
    \|\nabla(\rho_0^{-\frac\gamma2}u)\|_6&\leq&C\|\rho_0^{-\frac\gamma2}\rho\dot u+(\mu\nabla u+(\mu+\lambda)\text{div}uI-pI)\nabla\rho_0^{-\frac\gamma2}\|_2\nonumber\\
    &&+C\left\|\mu u\otimes\nabla\rho_0^{-\frac\gamma2}+(\mu+\lambda)u\cdot\nabla\rho_0^{-\frac\gamma2}I+
    \rho_0^{-\frac\gamma2}pI\right\|_6\nonumber\\
    & \leq&C(\|(\rho_0^{1-\frac\gamma2}\dot u,\rho_0^{\frac{1-\gamma}{2}}\nabla u,\rho_0^{\frac{3-\gamma}{2}}\theta)\|_2+
    \|(\rho_0^{\frac{1-\gamma}{2}}u,\rho_0^{1-\frac\gamma2}\theta)\|_6).\label{ELL2.1}
  \end{eqnarray}
  By the Sobolev inequality and (H1), one can get
  \begin{equation*}
    \| \rho_0^{\frac{1-\gamma}{2}}u \|_6\leq C(\|\rho_0^{\frac{1-\gamma}{2}}\nabla u\|_2+\|\nabla\rho_0^{\frac{1-\gamma}{2}}u\|_2)\leq C(\|\rho_0^{\frac{1-\gamma}{2}}\nabla u\|_2+\|\rho_0^{1-\frac{\gamma}{2}}u\|_2)
  \end{equation*}
  and, similarly,
  $$
  \| \rho_0^{1-\frac{\gamma}{2}}\theta \|_6\leq C(\|\rho_0^{1-\frac{\gamma}{2}}\nabla \theta\|_2+\|\rho_0^{\frac{3-\gamma}{2}}\theta\|_2).
  $$
  Substituting the above two inequalities into (\ref{ELL2.1}) yields
  \begin{equation*}
    \|\nabla(\rho_0^{-\frac\gamma2}u)\|_6\leq C\|(\rho_0^{1-\frac\gamma2}\dot u,\rho_0^{\frac{1-\gamma}{2}}\nabla u,\rho_0^{1-\frac{\gamma}{2}}\nabla \theta,\rho_0^{1-\frac{\gamma}{2}} u,\rho_0^{\frac{3-\gamma}{2}}\theta)\|_2,
  \end{equation*}
  and further
  \begin{eqnarray*}
    \|\rho_0^{-\frac\gamma2}\nabla u\|_6&=&\|\nabla(\rho_0^{-\frac\gamma2}u)-\nabla\rho_0^{-\frac\gamma2}u\|_6\leq \|\nabla(\rho_0^{-\frac\gamma2}u)\|_6+C\|\rho_0^{\frac{1-\gamma}{2}}u\|_6 \\
    &\leq& C\|(\rho_0^{1-\frac\gamma2}\dot u,\rho_0^{\frac{1-\gamma}{2}}\nabla u,\rho_0^{1-\frac{\gamma}{2}}\nabla \theta,\rho_0^{1-\frac{\gamma}{2}} u,\rho_0^{\frac{3-\gamma}{2}}\theta)\|_2.
  \end{eqnarray*}
  Combining the above two inequalities and applying Propositions \ref{PROP1}--\ref{PROP5}, the conclusion follows.
\end{proof}

\section{The De Giorgi iterations}
\label{SECDEGIORI}
This section is devoted to carrying out suitable De Giorgi iterations, which are preparations for proving
the lower and upper bounds of the entropy in the next section.
Due to the presence of vacuum at the far fields, which causes the degeneracy of the system,
the De Giorgi iterations performed in this section are of singular type, that is, some singular weights are introduced
in the testing functions chosen in the iterations. Moreover, the iterations are applied to
different equations in establishing the lower and upper bounds of the entropy: in dealing with
the lower bound, a De Giorgi iteration is applied to the entropy equation itself, while in dealing with
the upper bound, it is applied to the temperature equation. The singularly weighted energy estimates established in
the previous section play essential roles in the De Giorgi iteration applied to the temperature equation
to deal with the upper bound of the entropy,
but not in dealing with the lower bound of the entropy.

Set
\begin{equation}\label{MT}
M_T=\frac{\kappa(\gamma-1)}{c_v}e^{C_*\Phi_T}[(1+|\gamma-2|)K_1^2+K_2],
\end{equation}
where $C_*$ is the positive constant stated in Proposition \ref{PROP1} and $\Phi_T$ is given by (\ref{PHIT}).

The following De Giorgi type iteration will be used to get the uniform lower bound of
the entropy in the next section.

\begin{proposition}
  \label{PROP6}
Let $M_T$ be given by (\ref{MT}) and define
$$
\tilde s=\log\theta-(\gamma-1)\log\rho_0+M_Tt,\quad \tilde{\underline s}_0=\frac{\underline s_0}{c_v}-\log\frac RA.
$$
Then, the following statements hold:

(i) For any $\ell\leq\tilde{\underline s}_0$,
$$
\sup_{0\leq t\leq T}\left\|(\tilde s-\ell)_-\right\|_2^2(t)+\int_0^T\left\|
\frac{\nabla(\tilde s-\ell)_-}{\sqrt{\rho_0}}\right\|_2^2dt\leq  C,
$$
for a positive constant $C$ depending only on $c_v, \mu, \lambda, \kappa, K_1, T, \Phi_T,$ and the initial data.

(ii) Set
$$
\mathcal Y_\ell=\mathcal Y_\ell(T):=\sup_{0\leq t\leq T}\|(\tilde s-\ell)_-\|_2^2+\int_0^T\|\nabla(\tilde s-\ell)_-\|_2^2dt,\quad \forall\ell\leq\tilde{\underline s}_0,
$$
where $f_-:=-\min\{f,0\}$.
Then,
$$
\mathcal Y_\ell\leq \frac{C}{(m-\ell)^3}\mathcal Y_m^{\frac32},\quad\forall\ell<m\leq\tilde{\underline s}_0,
$$
for a positive constant $C$ depending only on $c_v, \mu, \lambda, \kappa, K_1,$ and $\Phi_T$.
\end{proposition}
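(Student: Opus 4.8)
Write down the parabolic equation governing $\tilde s$, then run a singularly weighted De Giorgi scheme on it. First I would derive the equation. Since $\rho_0>0$ and, by Proposition \ref{PROP1}, $\rho>0$ on $\mathbb R^3\times(0,T)$, and since $\theta>0$ there for $t>0$ (strong maximum principle for (\ref{EQTHETA}) — the zeroth order coefficient $-R\rho\,\mathrm{div}\,u$ is bounded and the source $\mathcal Q(\nabla u)\ge0$), all quantities below are well defined. Dividing (\ref{EQTHETA}) by $c_v\theta$, using $p=R\rho\theta$, $R/c_v=\gamma-1$ and the pointwise identity $\Delta\theta/\theta=\Delta\log\theta+|\nabla\log\theta|^2$, and substituting $\log\theta=\tilde s+(\gamma-1)\log\rho_0-M_Tt$, one gets
\begin{equation*}
\rho(\partial_t\tilde s+u\cdot\nabla\tilde s)-\tfrac{\kappa}{c_v}\Delta\tilde s
=\underbrace{\tfrac{\mathcal Q(\nabla u)}{c_v\theta}+\tfrac{\kappa}{c_v}|\nabla\log\theta|^2+\rho M_T}_{\ge 0}
+\tfrac{\kappa(\gamma-1)}{c_v}\,\Delta\log\rho_0-(\gamma-1)\rho\big(\mathrm{div}\,u+u\cdot\nabla\log\rho_0\big).
\end{equation*}
By (H1), $|\nabla\log\rho_0|\le K_1\sqrt{\rho_0}$, and by (H1)--(H2), $|\Delta\log\rho_0|=\big|\tfrac{\Delta\rho_0}{\rho_0}-\tfrac{|\nabla\rho_0|^2}{\rho_0^2}\big|\le(K_1^2+K_2)\rho_0$; together with $e^{-C_*\Phi_T}\rho_0\le\rho\le e^{C_*\Phi_T}\rho_0$ from Proposition \ref{PROP1}, the choice (\ref{MT}) of $M_T$ — the factor $(1+|\gamma-2|)K_1^2+K_2$ being exactly what the arithmetic below demands — is made so that $\rho M_T$ swallows the $O(\rho_0)$ forcing produced by $\Delta\log\rho_0$ and by the part of the drift treated pointwise, leaving a favourable remainder.

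\textbf{Proof of (i).} Fix $\ell\le\tilde{\underline s}_0$ and put $v_\ell=(\tilde s-\ell)_-$. Since $\tilde s|_{t=0}=\frac{s_0}{c_v}-\log\frac RA\ge\tilde{\underline s}_0\ge\ell$, we have $v_\ell|_{t=0}=0$. Multiply the displayed equation by $-v_\ell/\rho_0$ and integrate over $\mathbb R^3$ (made rigorous by the cutoff/approximation of Remark \ref{RKTESTING}). Using (\ref{EQRHO}), the convective terms combine into $\tfrac12\tfrac{d}{dt}\int\tfrac{\rho}{\rho_0}v_\ell^2\,dx$ plus a commutator $\tfrac12\int\tfrac{\rho}{\rho_0}(u\cdot\nabla\log\rho_0)v_\ell^2\,dx$ which, by (H1) and Proposition \ref{PROP1}, is $\le CK_1\|\sqrt{\rho_0}u\|_\infty\|v_\ell\|_2^2\le C\sqrt{\phi(t)}\|v_\ell\|_2^2$; the Laplacian term yields $\tfrac{\kappa}{c_v}\int\tfrac{|\nabla v_\ell|^2}{\rho_0}\,dx$ minus a cross term $\le CK_1\int\tfrac{|\nabla v_\ell|}{\sqrt{\rho_0}}v_\ell\,dx$, which Cauchy's inequality absorbs into half of $\int|\nabla v_\ell|^2/\rho_0$ plus $C\|v_\ell\|_2^2$. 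On the right, the three nonnegative terms, tested against $-v_\ell/\rho_0\le0$, carry a helpful sign; in particular $-\int\rho M_T\tfrac{v_\ell}{\rho_0}\,dx\le-M_Te^{-C_*\Phi_T}\int v_\ell\,dx$, which by the calibration of $M_T$ dominates $-\tfrac{\kappa(\gamma-1)}{c_v}\int\Delta\log\rho_0\tfrac{v_\ell}{\rho_0}\,dx\le\tfrac{\kappa(\gamma-1)}{c_v}(K_1^2+K_2)\int v_\ell\,dx$; and the residual drift term $(\gamma-1)\int\tfrac{\rho}{\rho_0}(\mathrm{div}\,u+u\cdot\nabla\log\rho_0)v_\ell\,dx$ is controlled, via (H1), Proposition \ref{PROP1} and Cauchy's inequality, by $C\|v_\ell\|_2^2+C(\|\mathrm{div}\,u\|_2^2+\|\sqrt{\rho_0}u\|_2^2)$. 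Since $\rho/\rho_0$ is bounded above and below and the right-hand coefficients are $L^1(0,T)$-in-time (regularity of the strong solution), Grönwall's inequality with $v_\ell|_{t=0}=0$ gives the bound (i).

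\textbf{Proof of (ii).} Let $\ell<m\le\tilde{\underline s}_0$. On $\{v_\ell>0\}=\{\tilde s<\ell\}$ one has $0<v_\ell=v_m-(m-\ell)<v_m$, so $\{v_\ell>0\}\subseteq\{v_m>m-\ell\}$ and, by Chebyshev, $|\{v_\ell>0\}(t)|\le(m-\ell)^{-2}\|v_m(t)\|_2^2\le(m-\ell)^{-2}\mathcal Y_m$ for every $t\in[0,T]$. Run the energy estimate of (i) once more, now retaining every source term — each carries a factor $v_\ell$, hence is supported in $\{v_\ell>0\}$ — and estimate them by Hölder in $x$ and $t$, using the Sobolev inequality in the form $\|v_\ell\|_2^2\le C\|\nabla v_\ell\|_2^2\,|\{v_\ell>0\}|^{2/3}$, the time-integrable $L^\infty_x$-norms of $\nabla u$ and $\sqrt{\rho_0}u$ built into $\phi$, the bound $v_\ell\le v_m$ on the support, and the above Chebyshev gain; after absorbing the resulting factor $\mathcal Y_\ell^{1/2}$ into the left and noting that $\mathcal Y_\ell$ is itself controlled by the right side, one arrives at $\mathcal Y_\ell\le C(m-\ell)^{-3}\mathcal Y_m^{3/2}$. (The iteration proper — concluding $\mathcal Y_\ell=0$ for $\ell$ a fixed amount below $\tilde{\underline s}_0$, hence a lower bound for $s$ — is performed in Section \ref{SECPROOF}.)

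\textbf{Where the difficulty lies.} The crux is the tension created by the singular weight $\rho_0^{-1}$ in the test function: it is indispensable — it produces the weighted dissipation $\|\rho_0^{-1/2}\nabla v_\ell\|_2^2$ of (i) and keeps the convective commutator under $\phi(t)$-Grönwall control — yet it generates the $\nabla\rho_0$ cross terms that must be reabsorbed via (H1); and, more seriously, the source terms of the $\tilde s$-equation ($\Delta\log\rho_0$, $\rho\,\mathrm{div}\,u$, $\rho u\cdot\nabla\log\rho_0$) neither decay nor are integrable over $\mathbb R^3$, so none can be bounded by a Lebesgue constant: the $O(\rho_0)$ pieces have to be swallowed by $\rho M_T$ (this is exactly what fixes the constant (\ref{MT}) and where the factor $|\gamma-2|$ comes from), and the rest must be handled by $\|v_\ell\|_2$/Grönwall in (i) and by the measure gain $|\{v_\ell>0\}|\lesssim(m-\ell)^{-2}\mathcal Y_m$ in (ii). Extracting in (ii) a power of $\mathcal Y_m$ strictly larger than $1$ — without which the De Giorgi iteration of Section \ref{SECPROOF} would not converge — is the delicate point, and it rests entirely on this Chebyshev gain.
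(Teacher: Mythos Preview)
Your derivation of the $\tilde s$-equation and the proof of (i) are essentially the paper's argument: test against $-\rho_0^{-1}(\tilde s-\ell)_-$, check that the choice of $M_T$ forces $c_vM_T\rho+\kappa(\gamma-1)\Delta\log\rho_0\ge 0$ (so this term drops with a good sign), control the convective commutator and the $\nabla\rho_0$ cross term via (H1), and handle the drift $I_2+I_3$ by Cauchy--Schwarz and Gr\"onwall. One minor remark: the extra factor $|\gamma-2|K_1^2$ in $M_T$ is \emph{not} needed here; it is consumed in Proposition~\ref{PROP7}, where $\Delta\rho_0^{\gamma-1}$ rather than $\Delta\log\rho_0$ appears.

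Part (ii), however, has a genuine gap. Your $L^2$-Chebyshev bound $|\{v_\ell>0\}|\le(m-\ell)^{-2}\|v_m\|_2^2\le(m-\ell)^{-2}\mathcal Y_m$ is too weak: following exactly the route you describe --- bound the source by $\phi^{1/2}\|v_\ell\|_2\,|\{v_\ell>0\}|^{1/2}$, use $\|v_\ell\|_2\le\mathcal Y_\ell^{1/2}$, insert Chebyshev, then ``absorb $\mathcal Y_\ell^{1/2}$'' --- one lands at $\mathcal Y_\ell\le C(m-\ell)^{-2}\mathcal Y_m$, i.e.\ $\sigma=1$, which you yourself note is fatal for the De Giorgi iteration. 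The Sobolev inequality you list, $\|v_\ell\|_2^2\le C\|\nabla v_\ell\|_2^2|\{v_\ell>0\}|^{2/3}$, involves $\|\nabla v_\ell\|_2$ (a left-hand quantity) and so cannot manufacture an extra power of $\mathcal Y_m$. The paper's key idea is to raise the Chebyshev exponent: on $\{\tilde s<\ell\}$ one has $1\le\big(\tfrac{(\tilde s-m)_-}{m-\ell}\big)^3$, hence
\[
|\{\tilde s<\ell\}|\le \frac{\|(\tilde s-m)_-\|_3^3}{(m-\ell)^3}\le \frac{C}{(m-\ell)^3}\,\|(\tilde s-m)_-\|_2^{3/2}\|\nabla(\tilde s-m)_-\|_2^{3/2}
\]
by the $3$D Gagliardo--Nirenberg inequality. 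After estimating the source as $I_2+I_3\le C\phi(t)\|(\tilde s-\ell)_-\|_2^2+C|\{\tilde s<\ell\}|$ and integrating in time (H\"older on $\int_0^T\|\nabla v_m\|_2^{3/2}dt$), this yields precisely $\mathcal Y_\ell\le C(m-\ell)^{-3}\mathcal Y_m^{3/2}$. The point is that the superlinear exponent $\sigma=3/2$ comes from interpolating the \emph{$m$-level} function between $L^2_x$ and $\dot H^1_x$, not from anything done at level $\ell$.
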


\begin{proof}
It follows from (\ref{EQs}) and definition of $\tilde s$ that
\begin{eqnarray*}
  c_v\rho(\partial_t\tilde s+u\cdot\nabla \tilde s)-\kappa\Delta \tilde s&=&c_vM_T\rho+\kappa(\gamma-1)\Delta\log\rho_0-R\rho u\cdot\nabla\log\rho_0\\
  &&-R\rho\text{div}\,u+\kappa\left|\frac{\nabla\theta}{\theta}\right|^2+\frac{\mathcal Q(\nabla u)}{\theta}.
\end{eqnarray*}
Multiplying the above with $-\frac{(\tilde s-\ell)_-}{\rho_0}$ yields
\begin{align}
  -c_v\int\rho (\partial_t\tilde s+&u\cdot\nabla\tilde s)\rho_0^{-1}(\tilde s-\ell)_-dx+\kappa\int\Delta \tilde s\rho_0^{-1}(\tilde s-\ell)_-dx\nonumber\\
  \leq & -\int[c_vM_T\rho+\kappa(\gamma-1)\Delta\log\rho_0]\rho_0^{-1}(\tilde s-\ell)_-dx\nonumber\\
  &+R\int\rho u\cdot\nabla\log\rho_0\rho_0^{-1} (\tilde s-\ell)_-dx
   +R\int\rho\text{div}\,u\rho_0^{-1}(\tilde s-\ell)_-dx\nonumber\\
  =:&I_1(t)+I_2(t)+I_3(t).  \label{LS.1}
\end{align}
In the similar ways as (\ref{P2.2}) and (\ref{P2.3}), one can get
\begin{eqnarray}
  &&-c_v\int\rho (\partial_t\tilde s+u\cdot\nabla \tilde s)\rho_0^{-1}(\tilde s-\ell)_-dx\nonumber\\
  &=&c_v\int\rho (\partial_t(\tilde s-\ell)_-+u\cdot\nabla (\tilde s-\ell)_-)\rho_0^{-1}(\tilde s-\ell)_-dx\nonumber\\
  &\geq&\frac{c_v}{2}\frac{d}{dt}\left\|\sqrt{\frac{\rho}{\rho_0}}(\tilde s-\ell)_-\right\|_2^2-C\|\sqrt{\rho_0}
   u\|_\infty\|(\tilde s-\ell)_-\|_2^2\label{LS.2}
\end{eqnarray}
and
\begin{eqnarray}
  \int\Delta\tilde s\rho_0^{-1}(\tilde s-\ell)_-dx&=&-\int\nabla \tilde s[\rho_0^{-1}\nabla(\tilde s-\ell)_-+\nabla\rho_0^{-1}(\tilde s-\ell)_-]dx
  \nonumber\\
  &=&\int\nabla(\tilde s-\ell)_-[\rho_0^{-1}\nabla(\tilde s-\ell)_-+\nabla\rho_0^{-1}(\tilde s-\ell)_-]dx\nonumber\\
  &\geq&\frac34\left\|
\frac{\nabla(\tilde s-\ell)_-}{\sqrt{\rho_0}}\right\|_2^2-C\|(\tilde s-\ell)_-\|_2^2. \label{LS.3}
\end{eqnarray}
By assumptions (H1)--(H2), Proposition \ref{PROP1}, and recalling the definition of $M_T$ in (\ref{MT}), one deduces that
\begin{eqnarray*}
  &&c_vM_T\rho+\kappa(\gamma-1)\Delta\log\rho_0\\
  &=&\rho_0\left(c_vM_T\frac{\rho}{\rho_0}+\kappa(\gamma-1)\frac{\Delta\rho_0}{\rho_0^2}-\kappa(\gamma-1)\frac{|\nabla\rho_0|^2}
  {\rho_0^3}\right)\\
  &\geq&\rho_0\Big(c_ve^{-C_*\Phi_T}M_T-\kappa(\gamma-1)K_2-\kappa(\gamma-1)K_1^2\Big)\\
  &=&\varrho_0\kappa(\gamma-1)|\gamma-2|K_1^2>0
\end{eqnarray*}
and, thus,
\begin{eqnarray*}
  I_1 =-\int[c_vM_T\rho+\kappa(\gamma-1)\Delta\log\rho_0]\rho_0^{-1}(\tilde s-\ell)_-dx\leq0.
\end{eqnarray*}
Thanks to this, substituting (\ref{LS.2}) and (\ref{LS.3}) into (\ref{LS.1}) yields
\begin{eqnarray}
&&c_v \frac{d}{dt}\left\|\sqrt{\frac{\rho}{\rho_0}}(\tilde s-\ell)_-\right\|_2^2+1.5\kappa \left\|
\frac{\nabla(\tilde s-\ell)_-}{\sqrt{\rho_0}}\right\|_2^2 \nonumber\\
&\leq &C(1+\|\sqrt{\rho_0} u\|_\infty)\|(\tilde s-\ell)_-\|_2^2+2(I_2+I_3). \label{LS.4}
\end{eqnarray}

(i) By assumption (H1), it follows from Propositions \ref{PROP1}--\ref{PROP3} that
\begin{eqnarray*}
  I_2+I_3\leq C(\|\sqrt\rho u\|_2^2+\|\nabla u\|_2^2+\|(\tilde s-\ell)_-\|_2^2)
  \leq C(1+\|(\tilde s-\ell)_-\|_2^2),
\end{eqnarray*}
which together with (\ref{LS.4}) yields
$$
c_v \frac{d}{dt}\left\|\sqrt{\frac{\rho}{\rho_0}}(\tilde s-\ell)_-\right\|_2^2+1.5\kappa \left\|
\frac{\nabla(\tilde s-\ell)_-}{\sqrt{\rho_0}}\right\|_2^2
 \leq C(1+\|\sqrt{\rho_0} u\|_\infty)\|(\tilde s-\ell)_-\|_2^2+C.
$$
Since $\tilde s|_{t=0}\geq\underline s_0$, it is clear that $(s-\ell)_-|_{t=0}=0$ for any $\ell\leq\underline s_0$.
Thanks to this, applying the Gr\"onwall inequality to the above inequality, and by Proposition \ref{PROP1}, one gets the
first conclusion.

(ii)
It follows from Proposition \ref{PROP1}, assumption (H1), and the Cauchy inequality that
\begin{eqnarray}
  2(I_2+I_3)&\leq&C\int(|\sqrt{\varrho_0}u|+|\nabla u|)(\tilde s-\ell)_-dy\nonumber\\
  &\leq& C(\|\sqrt{\rho_0}u\|_\infty^2+\|\nabla u\|_\infty^2)
  \|(\tilde s-\ell)_-\|_2^2+C\int_{\{\tilde s<\ell\}} 1dx. \label{LS.5}
\end{eqnarray}
Substituting (\ref{LS.5}) into (\ref{LS.4}) leads to
\begin{eqnarray}
   &&c_v \frac{d}{dt}\left\|\sqrt{\frac{\rho}{\rho_0}}(\tilde s-\ell)_-\right\|_2^2+ \kappa \left\|
\frac{\nabla(\tilde s-\ell)_-}{\sqrt{\rho_0}}\right\|_2^2
   \nonumber\\
    &\leq& C(\|\sqrt{\rho_0}u\|_\infty^2+\|\nabla u\|_\infty^2)
  \|(\tilde s-\ell)_-\|_2^2+C\int_{\{\tilde s<\ell\}} 1dx.\label{LS.6}
\end{eqnarray}
One can check easily that, for any $m>\ell$,
$$
1\leq\frac{(\tilde s-m)_-}{m-\ell},\qquad\mbox{ on }\{\tilde s<\ell\}\subseteq\{\tilde s<m\}.
$$
Therefore, it follows from the Gagliardo-Nirenberg inequality that
\begin{eqnarray*}
  \int_{\{\tilde s<\ell\}} 1dx&\leq& \int_{\{\tilde s<\ell\}}\left|\frac{(\tilde s-m)_-}{m-\ell}\right|^3dx\\
  &\leq&\int_{\{\tilde s<m\}}\left|\frac{(\tilde s-m)_-}{m-\ell}\right|^3dx=\frac{\|(\tilde s-m)_-\|_3^3}{(m-\ell)^3}\\
  &\leq&\frac{C}{(m-\ell)^3}\|(\tilde s-m)_-\|_2^{\frac32}\|\nabla(\tilde s-m)_-\|_2^{\frac32},
\end{eqnarray*}
which, substituted into (\ref{LS.6}), gives
\begin{eqnarray*}
   &&c_v \frac{d}{dt}\left\|\sqrt{\frac{\rho}{\rho_0}}(\tilde s-\ell)_-\right\|_2^2+ \kappa \left\|
\frac{\nabla(\tilde s-\ell)_-}{\sqrt{\rho_0}}\right\|_2^2
   \nonumber\\
    &\leq& C(1+\|\sqrt{\rho_0}u\|_\infty^2+\|\nabla u\|_\infty^2)
  \|(\tilde s-\ell)_-\|_2^2  \nonumber\\
  &&+\frac{C}{(m-\ell)^3}\|(\tilde s-m)_-\|_2^{\frac32}\|\nabla(\tilde s-m)_-\|_2^{\frac32}.
\end{eqnarray*}
Recalling that $\tilde s|_{t=0}\geq\tilde{\underline s}_0$, applying the Gr\"onwall inequality to the above, and using Proposition \ref{PROP1},
one obtains the second conclusion.
\end{proof}

To derive a uniform upper bound for the entropy, we need the following De Giorgi type iteration.

\begin{proposition}
\label{PROP7}
Let $M_T$ be given by (\ref{MT}) and define
\begin{eqnarray*}
&&\overline S_0=\frac ARe^{\frac{\overline s_0}{c_v}},\qquad \theta_\ell=\theta-\ell e^{M_Tt}\rho_0^{\gamma-1},\quad\forall\ell\in\mathbb R, \\
&&\mathcal Z_\ell=\mathcal Z_\ell(T)=\sup_{0\leq t\leq T}\|\rho_0^{1-\gamma}(\theta_\ell)_+\|_2^2+\int_0^T\|\rho_0^{\frac12-\gamma}\nabla(\theta_\ell)_+\|_2^2
 dt,\quad\forall\ell\geq\overline S_0,
\end{eqnarray*}
where $f_+:=\max\{f,0\}$.

Then, there is a positive constant $C$ depending only on $c_v$, $\gamma$,
$\mu$, $\lambda$, $\kappa$, $K_1,$ $K_2,$ $T,$ $\Phi_T$, and the initial data, such that
\begin{eqnarray*}
\mathcal Z_\ell&\leq& C(\ell^2+1),\quad\forall\ell\geq\overline S_0,\\
\mathcal Z_\ell&\leq&\frac{C \ell^2}{(\ell-m)^3} \mathcal Z_m^{\frac32},\quad \forall\ell>m\geq\overline S_0.
\end{eqnarray*}

\end{proposition}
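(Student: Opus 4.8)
The plan is to run the De Giorgi scheme of Proposition~\ref{PROP6}, but applied to the temperature equation (\ref{EQTHETA}) rather than to the entropy equation, with the singular weight dictated by the definition of $\mathcal Z_\ell$. First, substituting $\theta=\theta_\ell+\ell e^{M_Tt}\rho_0^{\gamma-1}$ into (\ref{EQTHETA}) and using $p=R\rho\theta$, one gets the equation for $\theta_\ell$:
\begin{align*}
c_v\rho(\partial_t\theta_\ell+u\cdot\nabla\theta_\ell)-\kappa\Delta\theta_\ell
&=\mathcal Q(\nabla u)-R\rho\theta\,\text{div}\,u-c_v\ell M_Te^{M_Tt}\rho\rho_0^{\gamma-1}\\
&\quad-c_v(\gamma-1)\ell e^{M_Tt}\rho\rho_0^{\gamma-2}u\cdot\nabla\rho_0\\
&\quad+\kappa(\gamma-1)\ell e^{M_Tt}\big((\gamma-2)\rho_0^{\gamma-3}|\nabla\rho_0|^2+\rho_0^{\gamma-2}\Delta\rho_0\big).
\end{align*}
Following the ``brief'' convention of Remark~\ref{RKTESTING} (i.e.\ cutting off and approximating as in the proof of Proposition~\ref{PROP2}), I would test this equation against the singularly weighted function $\rho_0^{1-2\gamma}(\theta_\ell)_+$ and integrate over $\mathbb R^3$. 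As in (\ref{AT-1})--(\ref{AT-5}), the material-derivative term produces $\frac{c_v}{2}\frac{d}{dt}\|\sqrt{\rho/\rho_0}\,\rho_0^{1-\gamma}(\theta_\ell)_+\|_2^2$ up to a term controlled by $C\phi(t)\|\rho_0^{1-\gamma}(\theta_\ell)_+\|_2^2$ via (H1) and Proposition~\ref{PROP1}, while the Laplacian term yields $\kappa\|\rho_0^{\frac12-\gamma}\nabla(\theta_\ell)_+\|_2^2$ minus a commutator absorbed into $\frac{\kappa}{4}\|\rho_0^{\frac12-\gamma}\nabla(\theta_\ell)_+\|_2^2+C\|\rho_0^{1-\gamma}(\theta_\ell)_+\|_2^2$.

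The key point is a sign cancellation: the two $\ell$-linear ``density'' source terms combine favorably. Factoring $\ell e^{M_Tt}\rho_0^{\gamma}>0$ out of $-c_v\ell M_Te^{M_Tt}\rho\rho_0^{\gamma-1}+\kappa(\gamma-1)\ell e^{M_Tt}((\gamma-2)\rho_0^{\gamma-3}|\nabla\rho_0|^2+\rho_0^{\gamma-2}\Delta\rho_0)$ leaves the factor
$$
-c_vM_T\frac{\rho}{\rho_0}+\kappa(\gamma-1)(\gamma-2)\frac{|\nabla\rho_0|^2}{\rho_0^3}+\kappa(\gamma-1)\frac{\Delta\rho_0}{\rho_0^2}\ \le\ -\kappa(\gamma-1)K_1^2\ <\ 0,
$$
by Proposition~\ref{PROP1} ($\rho/\rho_0\ge e^{-C_*\Phi_T}$), (H1), (H2) and the definition (\ref{MT}) of $M_T$ --- exactly the computation used for $I_1$ in the proof of Proposition~\ref{PROP6}. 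Since $\ell\ge\overline S_0>0$ and one integrates against $\rho_0^{1-2\gamma}(\theta_\ell)_+\ge0$, this group of terms contributes something $\le0$ and may be discarded.

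It remains to estimate $\mathcal Q(\nabla u)$, $-R\rho\theta\,\text{div}\,u$ and $-c_v(\gamma-1)\ell e^{M_Tt}\rho\rho_0^{\gamma-2}u\cdot\nabla\rho_0$ tested against $\rho_0^{1-2\gamma}(\theta_\ell)_+$. Here I would write $\theta=(\theta_\ell)_++\ell e^{M_Tt}\rho_0^{\gamma-1}$ on $\{\theta_\ell>0\}$ (and use $0\le(\theta_\ell)_+\le\theta$), distribute the singular weight $\rho_0^{1-2\gamma}$ among the factors (e.g.\ as $\rho_0^{-\frac\gamma2}\cdot\rho_0^{-\frac\gamma2}\cdot\rho_0^{1-\gamma}$ or $\rho_0^{-\frac\gamma2}\cdot\rho_0^{1-\frac{3\gamma}{2}}$), and apply H\"older, interpolation, the Sobolev and Gagliardo-Nirenberg inequalities, (H1), Proposition~\ref{PROP1}, and the weighted bounds on $\rho_0^{-\frac\gamma2}\nabla u$ (in $L^\infty_tL^2_x$ by Proposition~\ref{PROP3}, in $L^\infty_tL^6_x$ by Proposition~\ref{PROPELLIPTIC2}), on $\rho_0^{1-\frac\gamma2}\theta$ and $\rho_0^{\frac12-\gamma}\nabla\theta$ (Proposition~\ref{PROP2} with $\alpha=2\gamma$) and on $\rho_0^{\frac{1-\gamma}{2}}u$ (Proposition~\ref{PROP2} with $\alpha=\gamma$); positive powers of $\rho_0$ are harmless since $\rho_0\in L^\infty$. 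This yields a differential inequality of the shape
$$
c_v\frac{d}{dt}\Big\|\sqrt{\tfrac{\rho}{\rho_0}}\,\rho_0^{1-\gamma}(\theta_\ell)_+\Big\|_2^2+\frac{\kappa}{2}\|\rho_0^{\frac12-\gamma}\nabla(\theta_\ell)_+\|_2^2\ \le\ C(1+\ell^2)A(t)\big(1+\|\rho_0^{1-\gamma}(\theta_\ell)_+\|_2^2\big)+C(1+\ell^2)B(t)\!\int_{\{\theta_\ell>0\}}\!\!1\,dx,
$$
with $A,B\in L^1(0,T)$ depending only on the stated data; for the first estimate one keeps $(\theta_\ell)_+\le\theta$ throughout, so the last term never appears ($B\equiv0$). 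Because $s_0\le\overline s_0$ forces $\theta_0\le\overline S_0\rho_0^{\gamma-1}\le\ell\rho_0^{\gamma-1}$, one has $(\theta_\ell)_+|_{t=0}=0$; Gr\"onwall then gives $\mathcal Z_\ell\le C(\ell^2+1)$. For the iterative estimate, for $\ell>m\ge\overline S_0$ I would use the elementary bound
$$
1\ \le\ \frac{\rho_0^{1-\gamma}(\theta_m)_+}{\ell-m}\qquad\text{on }\{\theta_\ell>0\}\subseteq\{\theta_m>0\}
$$
(valid since there $\theta_m>(\ell-m)e^{M_Tt}\rho_0^{\gamma-1}$ and $e^{M_Tt}\ge1$) together with $(\theta_\ell)_+\le(\theta_m)_+$, to convert $\int_{\{\theta_\ell>0\}}1\,dx$ and the remaining boundary terms into powers of $\rho_0^{1-\gamma}(\theta_m)_+$; in particular $\int_{\{\theta_\ell>0\}}1\,dx\le(\ell-m)^{-3}\|\rho_0^{1-\gamma}(\theta_m)_+\|_3^3\le C(\ell-m)^{-3}\|\rho_0^{1-\gamma}(\theta_m)_+\|_2^{3/2}\|\nabla(\rho_0^{1-\gamma}(\theta_m)_+)\|_2^{3/2}$ by Gagliardo-Nirenberg, with $\|\nabla(\rho_0^{1-\gamma}(\theta_m)_+)\|_2\le C(\|\rho_0^{1-\gamma}(\theta_m)_+\|_2+\|\rho_0^{\frac12-\gamma}\nabla(\theta_m)_+\|_2)$ by (H1), which after integration in time produces $\frac{C\ell^2}{(\ell-m)^3}\mathcal Z_m^{3/2}$; the milder terms are reduced to this shape (using, where needed, the already-proved first estimate to absorb lower powers of $\mathcal Z_m$). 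A final Gr\"onwall with the vanishing initial trace gives $\mathcal Z_\ell\le\frac{C\ell^2}{(\ell-m)^3}\mathcal Z_m^{3/2}$.

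The step I expect to be the main obstacle is controlling $\int\mathcal Q(\nabla u)\rho_0^{1-2\gamma}(\theta_\ell)_+\,dx$: it carries the most singular weight $\rho_0^{1-2\gamma}$ directly against $|\nabla u|^2$ with only a single factor of $(\theta_\ell)_+$, so it cannot be absorbed into the temperature dissipation alone and must be treated by combining the $L^6$ weighted gradient estimate of Proposition~\ref{PROPELLIPTIC2} with interpolation and Gagliardo-Nirenberg on $\rho_0^{1-\gamma}(\theta_\ell)_+$ (respectively $\rho_0^{1-\gamma}(\theta_m)_+$). Keeping the fractional powers of $\rho_0$ matched through every H\"older splitting --- and arranging the resulting time-integrable factors so that the iteration exponent comes out exactly $\tfrac32$ --- is the delicate part of the argument.
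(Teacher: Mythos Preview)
Your plan is essentially the paper's proof: same test function $\rho_0^{1-2\gamma}(\theta_\ell)_+$, same sign argument for the $M_T$-group via (H1)--(H2) and Proposition~\ref{PROP1}, same use of Proposition~\ref{PROPELLIPTIC2} for the $L^6$ bound on $\rho_0^{-\gamma/2}\nabla u$ in handling $\int\mathcal Q(\nabla u)\rho_0^{1-2\gamma}(\theta_\ell)_+dx$, and the same Gagliardo--Nirenberg level-set estimate for the iteration.

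One small correction to your description of part~(i): the paper does \emph{not} use $(\theta_\ell)_+\le\theta$, and you should avoid it too. Invoking Proposition~\ref{PROP2} with $\alpha=2\gamma$ would require $\rho_0^{1-\gamma}\theta_0\in L^2$ and $\rho_0^{\frac12-\gamma}u_0\in L^2$, which are \emph{not} among the hypotheses of Theorem~\ref{THMMAIN}(iv). Instead, for the source terms one simply applies Cauchy--Schwarz in the other direction: for instance $II_5\le C\|\rho_0^{-\frac\gamma2}\nabla u\|_4^2\,\|\rho_0^{1-\gamma}(\theta_\ell)_+\|_2\le C\|\rho_0^{-\frac\gamma2}\nabla u\|_4^4+C\|\rho_0^{1-\gamma}(\theta_\ell)_+\|_2^2$, and $\|\rho_0^{-\frac\gamma2}\nabla u\|_4$ is bounded by interpolation between $L^2$ (Proposition~\ref{PROP3}) and $L^6$ (Proposition~\ref{PROPELLIPTIC2}); similarly $II_1,II_3$ are handled by $\|\sqrt{\rho_0}u\|_2$ and $\|\nabla u\|_2$, which come from the basic regularity of the strong solution. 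This gives your differential inequality with $B\equiv0$ directly, without any reference to $\theta$ itself.
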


\begin{proof}
Direct calculations show that
\begin{eqnarray*}
  c_v\rho(\partial_t\theta_\ell+u\cdot\nabla\theta_\ell)-\kappa\Delta\theta_\ell
  &=&- c_v\ell e^{M_Tt}\rho u\cdot\nabla\rho_0^{\gamma-1}-R\rho(\theta_\ell+\ell e^{M_Tt}\rho_0^{\gamma-1})\text{div}\,u\\
  &&+\ell e^{M_Tt}\left(\kappa\Delta\rho_0^{\gamma-1}-c_vM_T\rho_0^{\gamma-1}\rho\right)+\mathcal Q(\nabla u).
\end{eqnarray*}
Multiplying the above with $\rho_0^{1-2\gamma}(\theta_\ell)_+$ and integrating the resultant over $\mathbb R^3$ yield
\begin{eqnarray}
  &&c_v\int\rho(\partial_t\theta_\ell+u\cdot\nabla\theta_\ell)\rho_0^{1-2\gamma}(\theta_\ell)_+dx-\kappa\int
  \Delta\theta_\ell\rho_0^{1-2\gamma}(\theta_\ell)_+dx\nonumber\\
  &=&-c_v\ell e^{M_Tt}\int\rho u\cdot\nabla\rho_0^{\gamma-1}\rho_0^{1-2\gamma}(\theta_\ell)_+dx-R\int\rho\theta_\ell\text{div}\,u
  \rho_0^{1-2\gamma}(\theta_\ell)_+dx\nonumber\\
  &&-R\ell e^{M_Tt}\int\rho\rho_0^{-\gamma}\text{div}\,u(\theta_\ell)_+dx +\ell e^{M_Tt} \int\left(\kappa\Delta\rho_0^{\gamma-1}-c_vM_T\rho_0^{\gamma-1}\rho\right)\rho_0^{1-2\gamma}(\theta_\ell)_+dx\nonumber\\
  &&+\int\mathcal Q(\nabla u)\rho_0^{1-2\gamma}(\theta_\ell)_+dx
   =: II_1+II_2+II_3+II_4+II_5.\label{US.1}
\end{eqnarray}
In the similar ways as in (\ref{P2.2}) and (\ref{P2.3}), one can get
\begin{eqnarray}
  &&c_v\int\rho(\partial_t\theta_\ell+u\cdot\nabla\theta_\ell)\rho_0^{1-2\gamma}(\theta_\ell)_+dx\nonumber\\
  &\geq& \frac{c_v}{2}\frac{d}{dt}\|\sqrt\rho\rho_0^{\frac12-\gamma}(\theta_\ell)_+\|_2^2-C\|\sqrt{\rho_0}u\|_\infty \|\sqrt\rho\rho_0^{\frac12-\gamma}(\theta_\ell)_+\|_2^2\label{1026-1}
\end{eqnarray}
and
\begin{equation}
  -\int
  \Delta\theta_\ell\rho_0^{1-2\gamma}(\theta_\ell)_+dx \geq\frac34\|\rho_0^{\frac12-\gamma}\nabla(\theta_\ell)_+\|_2^2
  -C\|\rho_0^{1-\gamma}(\theta_\ell)_+\|_2^2.\label{1026-2}
\end{equation}
By (H1)--(H2), (\ref{MT}), and Proposition \ref{PROP1},
one deduces
\begin{eqnarray*}
  \kappa\Delta\rho_0^{\gamma-1}-c_vM_T\rho_0^{\gamma-1}\rho
  &=&\rho_0^\gamma\left(\kappa(\gamma-1)\frac{\Delta\rho_0}{\rho_0^2}+\kappa(\gamma-1)(\gamma-2)\frac{|\nabla\rho_0|^2}
  {\rho_0^3}-c_vM_T\frac{\rho}{\rho_0}\right)\\
  &\leq&\rho_0^\gamma\Big(\kappa(\gamma-1)K_2+\kappa(\gamma-1)|\gamma-2|K_1^2-c_vM_Te^{-C_*\Phi_T}\Big)\\
  &=&-\kappa(\gamma-1)\rho_0^\gamma K_1^2 \leq0
\end{eqnarray*}
and, thus,
\begin{equation}\label{1026-3}
II_4=\ell e^{M_Tt} \int\left(\kappa\Delta\rho_0^{\gamma-1}-c_vM_T\rho_0^{\gamma-1}\rho\right)\rho_0^{1-2\gamma}(\theta_\ell)_+dx\leq0.
\end{equation}
Thanks to (\ref{1026-1})--(\ref{1026-3}), one gets from (\ref{US.1}) and Proposition \ref{PROP1} that
\begin{eqnarray}
  &&c_v\frac{d}{dt}\|\sqrt\rho\rho_0^{\frac12-\gamma}(\theta_\ell)_+\|_2^2+1.5\kappa \|\rho_0^{\frac12-\gamma}\nabla(\theta_\ell)_+\|_2^2
  \nonumber\\
  &\leq& C(1+\|\sqrt{\rho_0}u\|_\infty) \|\sqrt\rho\rho_0^{\frac12-\gamma}(\theta_\ell)_+\|_2^2+2(II_1+II_2+II_3+II_5). \label{US.2}
\end{eqnarray}

(i) By Proposition \ref{PROP1} and using (H1), it follows from the H\"older and Cauchy inequalities that
\begin{eqnarray*}
  &&II_1+II_2+II_3+II_5\\
  &\leq& C\ell\|\sqrt{\rho_0} u\|_2\|\rho_0^{1-\gamma}(\theta_\ell)_+\|_2+C\|\nabla u\|_\infty\|\rho_0^{1-\gamma}(\theta_\ell)_+\|_2^2\\
  &&+C\ell\|\nabla u\|_2\|\rho_0^{1-\gamma}(\theta_\ell)_+\|_2+C\|\rho_0^{-\frac\gamma2}\nabla u\|_4^2\|\rho_0^{1-\gamma}(\theta_\ell)_+\|_2\\
  &\leq& C(\ell^2\|\sqrt{\rho_0} u\|_2^2+\ell^2\|\nabla u\|_2^2+\|\rho_0^{-\frac\gamma2}\nabla u\|_4^4)\\
  &&+C(1+\|\nabla u\|_\infty)\|\sqrt\rho\rho_0^{\frac12-\gamma}(\theta_\ell)_+\|_2^2.
\end{eqnarray*}
Due to Propositions \ref{PROP2}, \ref{PROP3}, and \ref{PROPELLIPTIC2}, one gets by the Young inequality that
\begin{equation*}
 \|\sqrt{\rho_0} u\|_2+\|\nabla u\|_2+\| \rho_0^{-\frac\gamma2}\nabla u\|_4^4\leq C(1+\| \rho_0^{-\frac\gamma2}\nabla u\|_2^2+\| \rho_0^{-\frac\gamma2}\nabla u\|_6^6)\leq C.
\end{equation*}
Therefore,
\begin{eqnarray*}
  &&II_1+II_2+II_3+II_5
 \leq  C(\ell^2 +1)+C(1+\|\nabla u\|_\infty)\|\sqrt\rho\rho_0^{\frac12-\gamma}(\theta_\ell)_+\|_2^2,
\end{eqnarray*}
from which and (\ref{US.2}) one gets
\begin{eqnarray*}
  &&c_v\frac{d}{dt}\|\sqrt\rho\rho_0^{\frac12-\gamma}(\theta_\ell)_+\|_2^2+1.5\kappa \|\rho_0^{\frac12-\gamma}\nabla(\theta_\ell)_+\|_2^2
  \nonumber\\
  &\leq& C(1+\|\sqrt{\rho_0}u\|_\infty+\|\nabla u\|_\infty) \|\sqrt\rho\rho_0^{\frac12-\gamma}(\theta_\ell)_+\|_2^2+C(\ell^2+1).
\end{eqnarray*}
Applying the Gr\"onwall inequality to the above inequality, by Proposition \ref{PROP1}, and noticing that
$(\theta_\ell)_+|_{t=0}=0$, for any $\ell\geq\overline S_0$, the first conclusion follows.

(ii)
Using (H1) and Proposition \ref{PROP1}, one can obtain
\begin{eqnarray*}
  II_1+II_3&\leq&C\ell e^{M_Tt}\int(\rho|u|\rho_0^{\frac12-\gamma}(\theta_\ell)_++|\nabla u|\rho_0^{1-\gamma}(\theta_\ell)_+) dx\\
  &\leq& C\ell(\|\sqrt{\rho_0}u\|_\infty+\|\nabla u\|_\infty)\int\rho_0^{1-\gamma}(\theta_\ell)_+ dx\\
  &\leq& C\ell^2\int_{\{\theta_\ell>0\}}1dx+C(\|\sqrt{\rho_0}u\|_\infty^2
  +\|\nabla u\|_\infty^2)\|\rho_0^{1-\gamma}(\theta_\ell)_+\|_2^2, \\
  II_2&\leq&C\|\nabla u\|_\infty\|\rho_0^{1-\gamma}(\theta_\ell)_+\|_2^2.
\end{eqnarray*}
As for $II_5$, by Proposition \ref{PROPELLIPTIC2}, one can deduce from the H\"older and Sobolev inequalities that
\begin{eqnarray*}
  II_5&=&\int\mathcal Q(\nabla u)\rho_0^{1-2\gamma}(\theta_\ell)_+dx\leq  C\int|\rho_0^{-\frac\gamma2}\nabla u|^2\rho_0^{1-\gamma}(\theta_\ell)_+ dx\\
  &\leq&C\|\rho_0^{-\frac\gamma2}\nabla u\|_6^2\|\rho_0^{1-\gamma}(\theta_\ell)_+\|_6\left(\int_{\{\theta_\ell>0\}}1dx\right)^{\frac12}\\
  &\leq&C\left(\|\rho_0^{1-\gamma}\nabla(\theta_\ell)_+\|_2+\|\rho_0^{\frac32-\gamma}(\theta_\ell)_+\|_2\right)
  \left(\int_{\{\theta_\ell>0\}}1dx\right)^{\frac12}\\
  &\leq&\frac\kappa8\|\rho_0^{\frac12-\gamma}\nabla(\theta_\ell)_+\|_2^2+C\|\rho_0^{1-\gamma}(\theta_\ell)_+\|_2^2
  +C \int_{\{\theta_\ell>0\}}1dx,
\end{eqnarray*}
where the following has been used
\begin{eqnarray*}
  \|\rho_0^{1-\gamma}(\theta_\ell)_+\|_6&\leq&C\|\nabla(\rho_0^{1-\gamma}(\theta_\ell)_+)\|_2\leq \left(\|\rho_0^{1-\gamma}\nabla(\theta_\ell)_+\|_2+\|\nabla \rho_0^{1-\gamma}(\theta_\ell)_+\|_2\right)\\
  &\leq&C\left(\|\rho_0^{1-\gamma}\nabla(\theta_\ell)_+\|_2+\|\rho_0^{\frac32-\gamma}(\theta_\ell)_+\|_2\right).
\end{eqnarray*}
Thanks to the estimates on $II_i, i=1,2,\cdots,5$, it follows from (\ref{US.2}) that
\begin{eqnarray}
  &&c_v\frac{d}{dt}\|\sqrt\rho\rho_0^{\frac12-\gamma}(\theta_\ell)_+\|_2^2 +\kappa \|\rho_0^{\frac12-\gamma}\nabla(\theta_\ell)_+\|_2^2\nonumber\\
  &\leq& C(1+\|\sqrt{\rho_0}u\|_\infty^2+\|\nabla u\|_\infty^2)\|\rho_0^{1-\gamma}(\theta_\ell)_+\|_2^2 +C\ell^2 \int_{\{\theta_\ell>0\}}1dx.\label{US.3}
\end{eqnarray}
Since $M_T>0$, one can check that for any $\ell>m$, it holds that
\begin{equation*}
  1\leq e^{-M_Tt}\rho_0^{1-\gamma}\frac{\theta_m^+}{\ell-m}\leq \rho_0^{1-\gamma}\frac{\theta_m^+}{\ell-m},\qquad\mbox{ on }\{\theta_\ell>0\}\subseteq\{\theta_m>0\}.
\end{equation*}
Thanks to this, it follows from the Gagliardo-Nirenberg inequality and (H1) that
\begin{eqnarray*}
  \int_{\{\theta_\ell>0\}} 1dx&\leq&\int_{\{\theta_\ell>0\}} \left|\frac{\varrho_0^{1-\gamma}\theta_m^+}{\ell-m}\right|^3dx
  \leq\frac{C}{(\ell-m)^3}\|\rho_0^{1-\gamma}\theta_m^+\|_2^{\frac32}\|\nabla(\rho_0^{1-\gamma}\theta_m^+)\|_2^{\frac32}\\
  &\leq&\frac{C}{(\ell-m)^3}\|\rho_0^{1-\gamma}\theta_m^+\|_2^{\frac32}(\|\rho_0^{1-\gamma}\nabla\theta_m^+\|_2
  +\|\nabla\rho_0^{1-\gamma}\theta_m^+\|_2)^{\frac32}\\
  &\leq&\frac{C}{(\ell-m)^3}\|\rho_0^{1-\gamma}\theta_m^+\|_2^{\frac32}(\|\rho_0^{1-\gamma}\nabla\theta_m^+\|_2
  +\|\rho_0^{1-\gamma}\theta_m^+\|_2)^{\frac32}.
\end{eqnarray*}
Substituting this into (\ref{US.3}) gives
\begin{eqnarray*}
  &&c_v\frac{d}{dt}\|\sqrt\rho\rho_0^{\frac12-\gamma}(\theta_\ell)_+\|_2^2 +\kappa \|\rho_0^{\frac12-\gamma}\nabla(\theta_\ell)_+\|_2^2\nonumber\\
  &\leq& \frac{C\ell^2}{(\ell-m)^3} \|\rho_0^{1-\gamma}\theta_m^+\|_2^{\frac32}(\|\rho_0^{1-\gamma}\nabla\theta_m^+\|_2
  +\|\rho_0^{1-\gamma}\theta_m^+\|_2)^{\frac32} \nonumber\\
  &&+C(1+\|\sqrt{\rho_0}u\|_\infty^2+\|\nabla u\|_\infty^2)\|\rho_0^{1-\gamma}(\theta_\ell)_+\|_2^2.
\end{eqnarray*}
Note that $(\theta_\ell)_+|_{t=0}=0$, for any $\ell\geq\overline S_0$.
The second conclusion follows from the above inequality by the Gr\"onwall inequality and Proposition \ref{PROP1}.
\end{proof}

\section{Uniform boundedness of entropy: proof of Theorem \ref{THMMAIN}}
\label{SECPROOF}
Let us first recall the following lemma cited from \cite{LIXINCPAM}.

\begin{lemma}[\cite{LIXINCPAM}]
  \label{lemiteration}
Let $m_0\in[0,\infty)$ be given and $f$ be a nonnegative non-increasing function on $[m_0,\infty)$ satisfying
$$
f(\ell)\leq\frac{M_0(\ell+1)^\alpha}{(\ell-m)^\beta}f^\sigma(m),\quad\forall\ell>m\geq m_0,
$$
for some nonnegative constants $M_0, \alpha, \beta,$ and $\sigma$, with $0\leq\alpha<\beta$ and $\sigma>1$.
Then,
$$f(m_0+d)=0,$$
where
$$d=\left[2 f^\sigma(m_0) (m_0+M_0+2)^{\frac{2\alpha+2\beta+1}{\sigma-1}+\frac{\beta}{(\sigma-1)^2}+2\alpha+\beta+1}\right]^{\frac{1}{\beta-\alpha}}+2.
$$
\end{lemma}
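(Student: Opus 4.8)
The plan is to run a De Giorgi iteration along a dyadic family of levels increasing to $m_0+d$, and then to invoke the classical nonlinear fast-decay principle for the resulting recursion. Concretely, with $d>0$ to be fixed at the end, set
\[
\ell_k=m_0+d\Bigl(1-2^{-k}\Bigr),\qquad k\ge0,
\]
so that $\ell_0=m_0$, the sequence $(\ell_k)$ is increasing with $\ell_k\uparrow m_0+d$, and $\ell_{k+1}-\ell_k=d\,2^{-(k+1)}$. Writing $Y_k:=f(\ell_k)$ and applying the hypothesis with $\ell=\ell_{k+1}$, $m=\ell_k$, together with $\ell_{k+1}+1\le m_0+d+1$, gives
\[
Y_{k+1}\le\frac{M_0(\ell_{k+1}+1)^\alpha}{(\ell_{k+1}-\ell_k)^\beta}\,Y_k^\sigma
\le \underbrace{2^\beta M_0(m_0+d+1)^\alpha d^{-\beta}}_{=:C_0}\;\bigl(2^\beta\bigr)^{k}\,Y_k^\sigma .
\]

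The next step is the elementary lemma: if $(Y_k)$ are nonnegative and satisfy $Y_{k+1}\le C_0 b^k Y_k^{1+\varepsilon}$ for all $k$, with $C_0>0$, $b>1$, $\varepsilon>0$, and if in addition $Y_0\le C_0^{-1/\varepsilon}b^{-1/\varepsilon^2}$, then $Y_k\le b^{-k/\varepsilon}Y_0\to0$; this follows from the one-line induction $Y_{k+1}\le C_0 b^k(b^{-k/\varepsilon}Y_0)^{1+\varepsilon}=(C_0 Y_0^\varepsilon)\,b^{-k/\varepsilon}Y_0\le b^{-(k+1)/\varepsilon}Y_0$. Here $b=2^\beta>1$ (since $\beta>\alpha\ge0$) and $\varepsilon=\sigma-1>0$. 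Once $Y_k\to0$, the monotonicity of $f$ gives $0\le f(m_0+d)\le f(\ell_k)=Y_k$ for every $k$, hence $f(m_0+d)=0$. (In fact $f$ then vanishes identically on $[m_0+d,\infty)$, so it suffices to exhibit \emph{some} admissible $d$ no larger than the one in the statement.)

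Thus everything reduces to choosing $d$ so that the smallness requirement $f(m_0)=Y_0\le C_0^{-1/(\sigma-1)}b^{-1/(\sigma-1)^2}$ is met, equivalently $2^{\beta\sigma/(\sigma-1)}M_0(m_0+d+1)^\alpha\,f(m_0)^{\sigma-1}\le d^\beta$. \textbf{This final verification is the only real obstacle}, because the left-hand side still contains $d$ through the factor $(m_0+d+1)^\alpha$; here the hypothesis $\alpha<\beta$ is essential. The remedy is to first demand $d\ge m_0+1$, so that $(m_0+d+1)^\alpha\le(2d)^\alpha$, which reduces the requirement to $d^{\beta-\alpha}\ge 2^{\alpha+\beta\sigma/(\sigma-1)}M_0\,f(m_0)^{\sigma-1}$, i.e.\ to an explicit lower bound on $d$ with exponent $1/(\beta-\alpha)$. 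One then checks that the value of $d$ displayed in the statement---whose bracketed term carries the factor $f^\sigma(m_0)$ and the large power $\tfrac{2\alpha+2\beta+1}{\sigma-1}+\tfrac{\beta}{(\sigma-1)^2}+2\alpha+\beta+1$ of $(m_0+M_0+2)$, plus the additive $+2$---dominates both $m_0+1$ and this lower bound, the generous exponent being precisely what absorbs the accumulated powers of $2$, of $M_0$, and the exponents $1/(\sigma-1)$ and $1/(\sigma-1)^2$. I would present this last point as a short chain of inequalities rather than matching constants exactly.
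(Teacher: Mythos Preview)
The paper does not give a proof of this lemma: it is quoted from \cite{LIXINCPAM} and invoked as a black box in Section~\ref{SECPROOF}. Your argument---dyadic levels $\ell_k=m_0+d(1-2^{-k})$, the recursion $Y_{k+1}\le C_0 b^k Y_k^\sigma$, and the standard fast-decay lemma---is exactly the classical De Giorgi--Stampacchia scheme by which such statements are proved, and the structure is correct.

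One detail in your closing verification should be fixed. You propose to ``first demand $d\ge m_0+1$'' so that $(m_0+d+1)^\alpha\le(2d)^\alpha$, but the displayed $d$ need not satisfy this: when $m_0$ is large and $f(m_0)$ is small, the bracketed term is tiny and $d$ is close to $2$. A cleaner reduction is to use $m_0+d+1\le(m_0+2)\,d$, valid for every $d\ge1$ (hence for the stated $d$, since $d\ge2$); this turns the smallness condition into
\[
d^{\beta-\alpha}\ \ge\ 2^{\beta\sigma/(\sigma-1)}\,M_0\,(m_0+2)^\alpha\,f(m_0)^{\sigma-1}.
\]
From here one should split on the size of $f(m_0)$ rather than of $d$ versus $m_0$: when $f(m_0)\ge1$ the bracket dominates, since then $f^\sigma(m_0)\ge f^{\sigma-1}(m_0)$ and the large exponent $E=\frac{2\alpha+2\beta+1}{\sigma-1}+\frac{\beta}{(\sigma-1)^2}+2\alpha+\beta+1$ absorbs $M_0(m_0+2)^\alpha$ and the powers of $2$; when $f(m_0)\le1$ one combines the two lower bounds $d^{\beta-\alpha}\ge 2^{\beta-\alpha}$ and $d^{\beta-\alpha}\ge(d-2)^{\beta-\alpha}=2f^\sigma(m_0)(m_0+M_0+2)^E$ and optimizes over which one to use depending on $f(m_0)$. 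This is still routine, but it is a genuine case split rather than the single chain of inequalities you suggest.
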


Then, we can prove Theorem \ref{THMMAIN} as follows.

\begin{proof}[Proof of Theorem \ref{THMMAIN}]
(i) and (ii) are direct corollaries of Proposition \ref{PROP2}, by choosing $\alpha=1$ and $\alpha=2$, respectively.

(iii) Let $\tilde s, \tilde{\underline s}_0,$ and $\mathcal Y_\ell$ be defined as in Proposition \ref{PROP6}. By Proposition \ref{PROP6}, $\mathcal Y_\ell$ is finitely valued for $\ell\leq\tilde{\underline s}_0$ and
  \begin{equation}
  \label{INEQ1}
  \mathcal Y_\ell\leq\frac{C_1}{(m-\ell)^3}\mathcal Y_m^\frac32,\quad\forall\ell<m\leq\tilde{\underline s}_0,
  \end{equation}
  for a positive constant $C_1$ depending only on $c_v, \mu, \lambda, \kappa, K_1,$ and $\Phi_T$. Define $f(\ell):=\mathcal Y_{\tilde{\underline s}_0-\ell}$. Noticing that
  $\mathcal Y_\ell$ is nonnegative and nondecreasing with respect to $\ell$, it is clear that $f(\ell)$ is nonnegative and non-increasing with respect to $\ell\in[0,\infty)$. Then, it follows from (\ref{INEQ1}) that
  $$
 f(\ell)\leq\frac{C_1}{(m-\ell)^3} f^\frac32(m),\quad\forall \ell>m\geq0,
  $$
  from which, by Lemma \ref{lemiteration}, it follows that $f(d_1)=\mathcal Y_{\tilde{\underline s}_0-d_1}=0$, for a positive constant $d_1$ depending only on
  $C_1$ and $f(0)=\mathcal Y_{\tilde{\underline s}_0}$. With the aid of this and recalling the definition of $\mathcal Y_\ell$, one gets
  $(\tilde s-\tilde{\underline s}_0+d_1)_-=0$ a.e.\,in $\mathbb R^3\times(0,T)$, and, consequently,
  $$
  \inf_{(x,t)\in\mathbb R^3\times(0,T)}\tilde s(x,t)\geq\tilde{\underline s}_0-d_1.
  $$
  Therefore, recalling the definition of $\tilde s$ and using Proposition \ref{PROP1}, one deduces
  \begin{eqnarray*}
  s(x,t)&\geq& c_v\inf_{(x,t)\in\mathbb R^3\times(0,T)}\tilde s+c_v\log\frac RA+(\gamma-1)\log\left(
    \inf_{(x,t)\in\mathbb R^3\times(0,T)}\frac{\rho_0}{\rho}\right)\\
    &\geq& c_v\left(\tilde{\underline s}_0-d_1+\log\frac RA\right)-(\gamma-1)C\Phi_T,\quad\forall(x,t)\in\mathbb R^3\times(0,T),
  \end{eqnarray*}
  for a positive constant $C$ depending only on $K_1.$ This proves the first conclusion.

  (iv) Let $\vartheta_\ell, \overline S_0,$ and $\mathcal Z_\ell$ be defined as in Proposition \ref{PROP7}. It follows from Proposition \ref{PROP7} that $\mathcal Z_\ell$ is finitely valued for any $\ell\geq\overline S_0$ and
  \begin{equation*}
    \mathcal Z_\ell\leq\frac{C_2\ell^2}{(\ell-m)^3}\mathcal Z_m^\frac32,\quad\forall\ell>m\geq\overline S_0,
  \end{equation*}
  for a positive constant $C_2$ depending only on $c_v, \gamma, \mu, \lambda, \kappa, K_1, T, \Phi_T,$ and the initial data. Then, by Lemma \ref{lemiteration},
  there is a positive constant $d_2$ depending only on $C_2$ and $\mathcal Z_{\overline S_0}$, such that $\mathcal Z_{\overline S_0+d_2}=0$.
  Thanks to this and recalling the definition of $\mathcal Z_\ell$, one gets
  $$
  \theta(x,t)\leq(\overline S_0+d_2)\rho_0^{\gamma-1}(x),\quad\forall (x,t)\in\mathbb R^3\times(0,T),
  $$
  from which, since $\theta=\frac ARe^{\frac{s}{c_v}}\rho^{\gamma-1}$ and by Proposition \ref{PROP1}, it follows
  $$
  s\leq c_v\log\left[\frac RA(\overline S_0+d_2)\right]+R\log\frac{\rho_0}{\rho}\leq c_v\log\left[\frac RA(\overline S_0+d_2)\right]+RC\Phi_T,
  $$
  for a positive constant $C$ depending only on $K_1$, which yields the second conclusion. Thus, Theorem \ref{THMMAIN} is proved.
\end{proof}

\section{Appendix}

In this appendix, we prove the following lemma which has already been used in the proof of Proposition \ref{PROP5}.

\begin{lemma}
\label{LEMMA}
The following two identities hold
\begin{eqnarray*}
&\text{div}\,(\Delta u\otimes u)-\Delta((u\cdot\nabla)u)=\text{div}\,(\nabla u(\text{div}\,uI-\nabla u-(\nabla u)^t),\\
&\text{div}\,(\nabla\text{div}\,u\otimes u)-\nabla\text{div}\,((u\cdot\nabla)u)
  =\text{div}\,[((\text{div}\,u)^2-\nabla u:(\nabla u)^t)I-\text{div}\,u(\nabla u)^t].
\end{eqnarray*}
\end{lemma}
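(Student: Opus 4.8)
The plan is to verify both identities by a direct component-wise computation, expanding each side in Cartesian coordinates with the summation convention and matching terms. The only analytic facts needed are the commutation of partial derivatives (so that $\partial_i\partial_j = \partial_j\partial_i$ acting on the smooth-enough $u$) and the product rule; everything else is bookkeeping. I would introduce the shorthand $u = (u_1,u_2,u_3)$ and write $\partial_i$ for $\partial_{x_i}$, and recall $((u\cdot\nabla)u)_k = u_j\partial_j u_k$, $(\text{div}\,u) = \partial_m u_m$, $(\nabla u)_{ki} = \partial_i u_k$.

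For the first identity, I would compute the $k$-th component of each side. On the left, $\big(\text{div}\,(\Delta u\otimes u)\big)_k = \partial_i(\Delta u_k\, u_i) = u_i\partial_i\Delta u_k + \Delta u_k\,\partial_i u_i$, while $\big(\Delta((u\cdot\nabla)u)\big)_k = \partial_i\partial_i(u_j\partial_j u_k) = \Delta u_j\,\partial_j u_k + 2\partial_i u_j\,\partial_i\partial_j u_k + u_j\partial_j\Delta u_k$. Subtracting and using $u_i\partial_i\Delta u_k = u_j\partial_j\Delta u_k$ (relabeling) cancels the third-derivative term, leaving $\Delta u_k\,\text{div}\,u - \Delta u_j\,\partial_j u_k - 2\partial_i u_j\,\partial_i\partial_j u_k$. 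On the right, I would expand $\big(\text{div}\,(\nabla u(\text{div}\,u\,I - \nabla u - (\nabla u)^t))\big)_k = \partial_i\big[(\partial_i u_k)(\partial_m u_m) - (\partial_j u_k)(\partial_i u_j) - (\partial_j u_k)(\partial_j u_i)\big]$ and apply the product rule to each of the three pieces, then check that the resulting six terms reorganize (after relabeling dummy indices and using $\partial_i\partial_j$ symmetry) into exactly the three terms obtained from the left side. The second identity is handled the same way: take $\nabla\text{div}$ and $\text{div}$ in components, expand $\big(\text{div}\,(\nabla\text{div}\,u\otimes u)\big)_k = \partial_k\partial_m(u_m\,\text{div}\,u)$... — more precisely work with $\partial_i\big((\partial_i\partial_m u_m)u_i\big)$ matched against $\partial_k\big(\partial_m(u_j\partial_j u_m)\big)$ — and verify the difference equals $\partial_k\big[(\text{div}\,u)^2 - \partial_i u_j\,\partial_j u_i\big] - \partial_i\big[(\text{div}\,u)\,\partial_k u_i\big]$.

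I do not expect a genuine obstacle here; the proof is elementary. The one place that requires care is the index-chasing on the right-hand sides — in particular, keeping straight which factor carries the transpose (so that $(\nabla u)^t$ contributes $\partial_j u_i$ rather than $\partial_i u_j$ in the $(i,j)$ slot) and making sure every application of the product rule is accounted for, since each of the two right-hand sides produces roughly six terms that must collapse to three. To keep the write-up clean I would present the first identity in full and then remark that the second follows by the identical procedure, displaying only the expansion of the two nontrivial terms. An alternative, coordinate-free route would be to use the vector identities $\Delta = \nabla\text{div} - \nabla\times\nabla\times$ and standard commutators for the Lie derivative along $u$; I would mention this but proceed with the component computation as it is the most transparent and self-contained.
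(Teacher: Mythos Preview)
Your proposal is correct and follows essentially the same approach as the paper: both proofs verify the identities by direct component-wise calculation using the product rule and the symmetry of mixed partials. The paper's write-up keeps the divergence structure visible throughout (manipulating expressions like $\text{div}(u\partial_k u_i)$ as units) rather than fully expanding first, but this is a cosmetic difference only.
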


\begin{proof}
By direct calculations,
\begin{eqnarray*}
  &&\text{div}\,(u\Delta u_i)-\Delta(u\cdot\nabla u_i)\\
  &=&\text{div}(u\partial_k^2u_i)-\partial_k^2(u\cdot\nabla u_i)\\
  &=&\text{div}\,(\partial_k(u\partial_ku_i)-\partial_ku\partial_ku_i)-\partial_k(u\cdot\nabla\partial_ku_i)-\partial_k(\partial_ku\cdot\nabla
  u_i)\\
  &=&\partial_k(u\cdot\nabla\partial_ku_i+\text{div}\,u\partial_ku_i)-\text{div}\,(\partial_ku\partial_ku_i)\\
  &&-\partial_k(u\cdot\nabla\partial_ku_i)-\partial_k(\partial_ku\cdot\nabla u_i)\\
  &=&\partial_k(\text{div}\,u\partial_ku_i-\partial_ku\cdot\nabla u_i)-\text{div}\,(\partial_ku\partial_ku_i),
\end{eqnarray*}
the first conclusion follows.
One deduces
\begin{eqnarray*}
  &&\text{div}\,(u\partial_i\text{div}\,u)-\partial_i\text{div}\,((u\cdot\nabla)u)\\
  &=&\text{div}\,\partial_i(u\text{div}\,u)-\text{div}\,(\partial_iu\text{div}\,u)-\partial_i(u\cdot\nabla\text{div}\,u+\nabla u_k\cdot\partial_k u)\\
  &=&\partial_i(u\cdot\nabla\text{div}\,u+(\text{div}\,u)^2)-\text{div}\,(\partial_iu\text{div}\,u)\\
  &&-\partial_i(u\cdot\nabla\text{div}\,u+\nabla u_k\cdot\partial_ku)\\
  &=&\partial_i((\text{div}\,u)^2-\nabla u_k\cdot\partial_ku)-\text{div}\,(\partial_iu\text{div}\,u),
\end{eqnarray*}
the second conclusion follows.
\end{proof}

\section*{Acknowledgments}
{The work of J. L. was supported in part by the National
Natural Science Foundation of China (11971009 and 11871005), by
the Guangdong Basic and Applied Basic Research Foundation (2019A1515011621,
2020B1515310005, and 2021A1515010247), and by the Key Project of National Natural Science Foundation of China (12131010).
The work of Z.X. was supported in part by the Zheng Ge Ru Foundation, by the Hong Kong RGC Earmarked Research Grants CUHK-14305315,
CUHK-14300917, CUHK-14302819, and CUHK-14302917, and by the Guangdong Basic and Applied Basic Research
Foundation 2020B1515310002.}
\par

\end{document}